\pgfplotsset{compat=1.7}
\def\R{{\mathbb R}} 
\def\N{{\mathbb N}}
\def\C{{\mathbb C}}
\def\S{{\mathbb{S}}}
\def\D{{\mathscr{D}}}
\def\A{{\mathcal{A}}}
\def\ds{\displaystyle}
\newcommand{\norm}[1]{\left\Vert#1\right\Vert}
\renewcommand{\leq}{\leqslant}                 
\renewcommand{\geq}{\geqslant}
\renewcommand{\tilde}{\widetilde}
\renewcommand{\hat}{\widehat}
\renewcommand{\bar}{\overline}
\renewcommand{\div}{\hbox{div\,}}
\renewcommand{\i}{\mathrm{\bf i}}                 
\renewcommand{\O}{\mathcal{O}}
\newtheorem{theorem}{Theorem}[section]
\newtheorem{remark}[theorem]{Remark}
\newtheorem{lemma}[theorem]{Lemma}
\newtheorem{proposition}[theorem]{Proposition}
\numberwithin{equation}{section}
\def\keywords{
	\vspace{1ex}
	\noindent
	\if@twocolumn
	\small{\bf  Keywords}\/---$\!$    \else
	\begin{center}\small\ {\bf Keywords}\end{center}\quotation\small
	\fi}
\def\endkeywords{\vspace{0.6em}\par\if@twocolumn\else\endquotation\fi
	\normalsize\rm}
\title{Quantitative unique continuation for non-regular perturbations of the Laplacian\footnote{P. C. belongs to Ikerbasque and is funded by MICIU/AEI/10.13039/501100011033 and ERDF, EU through the grant PID2021-122156NB-I00. Additional funds come from Excellence Acreditation "Severo Ochoa", and BERC Programme 2022-2025 Eusko Jaurlaritza. S. E. and L. T. are partially supported by the PHC Utique 46359ZJ, code CMCU: 21G1502 and the Project TRECOS ANR-20-CE40-0009 funded by the ANR. L. T. has also benefited from a financial support from the University of Bordeaux as an ``initiative d’excellence'', within the setting of the ``plan France 2030''.}}
\author{
	Pedro Caro\footnote{Ikerbasque and Basque Center for Applied Mathematics, Bilbao, Spain,  \texttt{pcaro@bcamath.org}}
	\and
	Sylvain Ervedoza\footnote{Institut de Mathématiques de Bordeaux, UMR 5251, Université de Bordeaux, CNRS, Bordeaux INP, F-33400 Talence, France,  
		\texttt{sylvain.ervedoza@math.u-bordeaux.fr}}, 
	\and
	Lotfi Thabouti\footnote{Institut de Mathématiques de Bordeaux, UMR 5251, Université de Bordeaux, CNRS, Bordeaux INP, F-33400 Talence, France, and Département de mathématiques, Faculté des sciences de Tunis, Université de Tunis El Manar, 2092 El Manar \& Ecole Nationale d'Ingénieurs de Tunis, ENIT-LAMSIN, B.P. 37, 1002 Tunis, Tunisia, \texttt{lotfi.thabouti@math.u-bordeaux.fr,  lotfi.thabouti@etudiant-fst.utm.tn}}
} 
\begin{document}
	
	\maketitle
	
	\begin{abstract}
		In this work, we investigate the quantitative estimates of the unique continuation property for solutions of an elliptic equation $\Delta u = V u + W_1 \cdot \nabla u + \hbox{div\,} (W_2 u)$ in an open, connected subset of $\mathbb{R}^d$, where $d \geq 3$. Here, $V \in L^{q_0}$, $W_1 \in L^{q_1}$, and $W_2 \in L^{q_2}$ with $q_0 > d/2$, $q_1 > d$, and $q_2 > d$. Our aim is to provide an explicit quantification of the unique continuation property with respect to the norms of the potentials. To achieve this, we revisit the Carleman estimates established in  \cite{Dehman-Ervedoza-Thabouti-2023} and prove a refined version of them, and we combine them with an argument due to T. Wolff introduced in \cite{wolff1992property} for the proof of unique continuation for solutions of equations of the form $\Delta u = V u + W_1 \cdot \nabla u$.
	\end{abstract}

	\begin{keywords}
		\noindent
		Carleman estimates, boundary value problem, elliptic equations, Fourier restriction theorems. 
		\medskip

		\noindent
		\textbf{2010 Mathematics Subject Classification:}
		35Bxx, 	
		35J25,   
		35B60, 
		35R05. 
	\end{keywords}

	\setcounter{tocdepth}{1}
	\tableofcontents
	%
	%
	\section{Introduction}
	
	{\bf Main result.} Our main goal is to prove the following quantitative unique continuation result. 
	
	\begin{theorem} \label{Thm-QUCP} 
		Let $d\geqslant 3$, $\Omega \subset \R^{d}$ be a bounded domain, and $\omega$ and $\O$ be non-empty open subsets of $\Omega$ with $\omega \subset \overline \omega \subset \O \subset \overline\O \subset \Omega$. Further assume the following geometric condition: 
		\begin{enumerate}[topsep=0pt,itemsep=0pt,parsep=0pt]
			\item[{\bf (GC)}]\label{GC} 
			For all $y \in \overline\O$, there exist $x_0 \in \omega$, $r_{y}>0$ and a smooth path $\gamma_{y}$ of finite length such that $\gamma_{ y}(0) = x_0$, $\gamma_{y}(1) = y$, and $\cup_{s \in [0,1]} B_{\gamma_{y}(s)}(r_{y}) \subset \Omega$, where $B_{\gamma_y(s)}(r_y)$ is the ball centered in $\gamma_y(s)$ and of radius $r_y$.
		\end{enumerate}
		Then there exist constants $C=C(\omega, \O, \Omega)>0$ and $\alpha \in (0,1)$ depending only on $\omega$, $\O$ and $\Omega$ so that for any  solution $u\in H^{1}(\Omega)$ of 
		\begin{equation}
		\label{Elliptic-UC}
		\Delta u = V u + W_1 \cdot \nabla u + \operatorname{div} ( W_2 u) \quad \text{ in }\D'( \Omega), 
		\end{equation}
		where
		\begin{align} \label{potential-assumptions}
		V \in L^{q_0}(\Omega)  \hbox{ with }q_0 \in \left(\frac{d}{2}, \infty \right], 
		\qquad \text{ and } \qquad
		W_j \in L^{q_j}(\Omega; \C^d) 
		\text{ with } q_j \in \left(d, \infty\right],
		\text{ for } j \in \{1, 2\},
		\end{align}
		we have 
		\begin{equation} \label{Quantitative-0}
		\norm{u }_{H^1(\O )} \leqslant 
		C 
		e^{
			C \left(\norm{V}_{L^{q_0}(\Omega)}^{\gamma(q_0)}+ \norm{W_1}_{L^{q_1}(\Omega)}^{\delta(q_1)}
			+ \norm{W_2}_{L^{q_2}(\Omega)}^{\delta(q_2)}
			\right)
		}\norm{u}_{H^1(\omega)}^\alpha
		\norm{u}_{H^1(\Omega)}^{1-\alpha},
		\end{equation}
		with  
		\begin{equation} \label{Conditions-gamma(q)-delta(q)}
		\gamma (q)= \left\{
		\begin{array}{ll}
		\ds  \dfrac{1}{\ds \frac{3}{2} \left(1- \frac{d}{2q}\right) + \frac{1}{2q} } & \text{ if } q \geq d,
		\medskip\\
		\ds \dfrac{1}{ \ds \left(\frac{3}{4} + \frac{1}{2d}\right)  \left(2- \frac{d}{q}\right) } & \text{ if } q \in \left(\ds \frac{d}{2},d\right],
		\end{array}
		\right.
		\qquad \text{ and } \qquad 
		\delta(q) = \frac{2}{\ds 1 - \frac{d}{q}}, \quad  \text{ if } q > d. 
		\end{equation}
	\end{theorem}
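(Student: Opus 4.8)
The plan is to first reduce Theorem~\ref{Thm-QUCP} to a local quantitative estimate of the following shape: there exist $C>0$ and $\theta \in (0,1)$, depending only on a scale $r$, so that whenever $B_{4r}(x_*) \subset \Omega$ and $u \in H^1$ solves \eqref{Elliptic-UC}, one has
\begin{equation*}
\norm{u}_{H^1(B_{2r}(x_*))} \leqslant C\, e^{C\Lambda}\, \norm{u}_{H^1(B_{r}(x_*))}^{\theta}\, \norm{u}_{H^1(B_{3r}(x_*))}^{1-\theta},
\qquad
\Lambda := \norm{V}_{L^{q_0}(\Omega)}^{\gamma(q_0)} + \norm{W_1}_{L^{q_1}(\Omega)}^{\delta(q_1)} + \norm{W_2}_{L^{q_2}(\Omega)}^{\delta(q_2)}.
\end{equation*}
Granting this, the geometric condition {\bf (GC)} allows one to cover each path $\gamma_y$ ($y \in \overline{\mathcal{O}}$) by a finite, geometry-controlled number of balls of radius comparable to $r_y$ lying in $\Omega$, with consecutive balls overlapping, and to propagate smallness from the ball sitting in $\omega$ to a ball around $y$; compactness of $\overline{\mathcal{O}}$ then produces uniform constants. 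Chaining the local inequality $N$ times multiplies the prefactors to $C^{1+\theta+\cdots}e^{C(1+\theta+\cdots)\Lambda} \le C' e^{C'\Lambda}$, which stays of the stated form, while the interpolation exponents compose to some $\alpha = \theta^{N} \in (0,1)$ depending only on $(\omega,\mathcal{O},\Omega)$; bounding the interior annular norms by $\norm{u}_{H^1(\Omega)}$ yields \eqref{Quantitative-0}. This part is routine.

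\textbf{The refined Carleman estimate.} The core ingredient is a quantitative, $\tau$-explicit refinement of the Carleman estimates of \cite{Dehman-Ervedoza-Thabouti-2023}: for a radial weight $\varphi$ convexified so as to be strictly pseudoconvex, there are positive exponents $a=a(q_0)$, $b=b(q_1)$ and Lebesgue exponents dictated by Fourier restriction theory (the Kenig--Ruiz--Sogge / Stein--Tomas range) such that, for all $v \in C_c^\infty$ supported away from the singularity of $\varphi$ and all $\tau$ large,
\begin{equation*}
\tau^{a}\, \norm{e^{\tau\varphi} v}_{L^{p_0^*}} \;+\; \tau^{b}\, \norm{e^{\tau\varphi} \nabla v}_{L^{p_1^*}} \;\leqslant\; C\, \norm{e^{\tau\varphi}\Delta v}_{L^{p}},
\end{equation*}
where $(p,p_0^*)$ and $(p,p_1^*)$ are chosen so that Hölder against $L^{q_0}$, respectively $L^{q_1}$ and $L^{q_2}$, lands back inside the left-hand side norms, i.e. $\tfrac1p = \tfrac1{q_0}+\tfrac1{p_0^*}$ for the zeroth order term and the analogous relation with the gradient norm for the first order terms; the strict conditions $q_0 > d/2$, $q_j > d$ are exactly what makes these Hölder embeddings admissible on a bounded domain. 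The refinement over \cite{Dehman-Ervedoza-Thabouti-2023} that I would need is the precise value of the powers $a$ and $b$ of $\tau$, since these are what produce $\gamma(q_0)$, $\delta(q_1)$, $\delta(q_2)$ in the statement: the two branches of $\gamma$ in \eqref{Conditions-gamma(q)-delta(q)} reflect whether the relevant Hölder exponent forces the use of the elliptic Sobolev estimate (range $q \geq d$) or of the genuinely restriction-theoretic estimate (range $d/2 < q < d$).

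\textbf{Plugging in the equation, absorption, and Wolff's argument.} I would apply the Carleman estimate to $v=\chi u$, with $\chi \equiv 1$ on $B_{2r}$, $\mathrm{supp}\,\chi \subset B_{3r}$, so that $\Delta v = \chi\bigl(Vu + W_1\cdot\nabla u + \operatorname{div}(W_2 u)\bigr) + [\Delta,\chi]u$; in the term $e^{\tau\varphi}\chi\,\operatorname{div}(W_2 u)$ one integrates by parts to move the derivative onto $e^{\tau\varphi}\chi$, producing a factor $\tau\nabla\varphi$, which explains why $W_2$ carries the same exponent $\delta(q_2)$ as $W_1$. By Hölder each potential contribution is bounded by its $L^q$-norm times the matching left-hand side norm of $e^{\tau\varphi}v$; taking $\tau \simeq 1 + \Lambda$ makes $\tau^{a}$ (resp. $\tau^{b}$) dominate twice that norm, so all potential terms are absorbed into the left-hand side. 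What is left is $\norm{e^{\tau\varphi}[\Delta,\chi]u}_{L^p}$, supported in the annulus $\{2r \le |x-x_*| \le 3r\}$; partitioning $e^{\tau\varphi}$ along the level sets of $\varphi$ over the three regions and optimizing in $\tau$ gives the local inequality with prefactor $e^{C\tau}=e^{C\Lambda}$. The delicate point---and the reason Wolff's argument from \cite{wolff1992property} is invoked---is the first order term $W_1\cdot\nabla u$: a single Carleman weight handles it only with a loss, because the gradient enters the left-hand side with a weaker power of $\tau$ than the function itself. Following Wolff, I would instead run the Carleman estimate over a family of dyadic annuli (equivalently, with a one-parameter family of weights) and sum, which is what lets the gradient term be absorbed while keeping the sharp exponent $\delta(q_1) = 2/(1-d/q_1)$ rather than a worse one. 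I expect this Wolff-type iteration, together with pinning down the exact $\tau$-powers $a$ and $b$ in the refined Carleman estimate, to be the main obstacle; the covering/propagation step and the Hölder bookkeeping are routine.
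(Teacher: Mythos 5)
Your overall skeleton (refined Carleman estimate, H\"older, absorption for large $\tau$, then chaining along paths via {\bf (GC)}) matches the paper's, and the covering/propagation step is indeed routine. But the heart of the proof --- how the first-order term is actually absorbed --- is misidentified, and this is a genuine gap rather than a detail. Wolff's argument is not a dyadic-annulus decomposition or a sum over a one-parameter family of weights. It is a concentration--pigeonhole argument over a \emph{$d$-parameter} family of linear weights $e^{k\cdot y}$, $k$ ranging over a convex set $\mathcal{C}_n$ of volume $\sim n^d$: Lemma \ref{Lemma-Wolff} produces pairwise disjoint sets $E_{k_j}$ on which the measures $e^{\varphi_{k_j}}\bigl(|W_{1,d}\cdot\nabla v|^{\frac{2d}{d+2}}+\cdots\bigr)\,dx$ concentrate, with $\sum_j |E_{k_j}|^{-1}\gtrsim n^d$, and the pigeonhole over these disjoint sets selects one index for which $\|W_{1,d}\|_{L^d(E_{k_n})}\lesssim \bigl(|k_n|\,|E_{k_n}|^{1/d}\bigr)^{-1}$. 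No one-parameter family can deliver this, and summing over dyadic annuli does not recover the exponent $\delta(q_1)=2/(1-d/q_1)$: the obstruction is that $\|e^{\tau\varphi}W_{1,d}\cdot\nabla v\|_{L^{2d/(d+2)}}\le \|W_{1,d}\|_{L^d}\|e^{\tau\varphi}\nabla v\|_{L^2}$ enters the right-hand side with weight $\tau^{3/4\pm 1/(2d)}$ while the gradient is controlled on the left only with $\tau^{1/2}$, so no choice of $\tau$, however large, absorbs it; the smallness must come from localizing $W_{1,d}$ to the concentration set.

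Two further ingredients you omit are indispensable for that mechanism to close. First, the Carleman estimate itself must carry a localized left-hand side term of the form $\tau^{\frac34+\frac1{2d}}\min\{(\tau|E|^{1/d})^{-1},1\}\bigl(\tau\|e^{\tau\varphi}u\|_{L^2(E)}+\|e^{\tau\varphi}\nabla u\|_{L^2(E)}\bigr)$ valid for \emph{arbitrary measurable} $E$ (estimate \eqref{Carleman-General-2-Improved}); your proposed estimate $\tau^a\|e^{\tau\varphi}v\|_{L^{p_0^*}}+\tau^b\|e^{\tau\varphi}\nabla v\|_{L^{p_1^*}}\le C\|e^{\tau\varphi}\Delta v\|_{L^p}$ has no such term (and a weighted gradient bound in $L^{p_1^*}>L^2$ uniform in $\tau$ is not available anyway), so the concentration information cannot be used. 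Second, the potentials must be split by truncation, $V=V_{d/2}+V_d+V_\infty$, $W_j=W_{j,d}+W_{j,\infty}$, at levels depending on $n$: for $q_0\in(d/2,d)$ the $L^{d/2}$ piece appears on both sides with the \emph{same} power of $\tau$, so absorption requires $\|V_{d/2}\|_{L^{d/2}}$ small independently of $\tau$, which only the truncation provides; the exponents $\gamma,\delta$ come precisely from balancing these truncation levels against $n$, not from ``$\tau\simeq 1+\Lambda$'' alone. Finally, you would also need to reconcile the linear-in-$k$ weights demanded by Wolff's lemma with the pseudoconvexity \eqref{CarlemanWeight-Cond1}--\eqref{CarlemanWeight-Cond2} required by the Carleman estimate (a purely radial convexified weight does not give a $d$-parameter family); the paper does this by working in a half-ball geometry with $\varphi_k(x)=k_1x_1^2+k'\cdot x'$ and then recovering balls by rotation averaging and a Kelvin transform.
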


	Before going further, let us note that the geometric condition {\bf (GC)} assumed in Theorem \ref{Thm-QUCP} is a very mild technical condition. This condition can be violated in some fractal type sets, but it is certainly satisfied for most geometrical settings. In particular, this geometric condition {\bf (GC)} is satisfied when $\omega$, $\mathcal{O}$ and $\Omega$ are concentric balls, case in which \eqref{Quantitative-0} reduces to the usual $3$-balls type estimate, with an explicit quantification with respect to the lower order terms. 
	
	Of course, a direct application of Theorem \ref{Thm-QUCP} yields the following unique continuation property for the Laplace operator: If $u \in H^1(\Omega)$ satisfies \eqref{Elliptic-UC} with potentials $V$, $W_1$ and $W_2$ as in \eqref{potential-assumptions} and $u = 0$ in $\omega$, then $u = 0$ in any set $\O$ satisfying $\O \Subset \Omega$ and {\bf (GC)}. Theorem \ref{Thm-QUCP} is a quantification of this property as it states that, if $u \in H^1(\Omega)$ is small in $\omega$ and satisfies \eqref{Elliptic-UC} with potentials $V$, $W_1$ and $W_2$ as in \eqref{potential-assumptions}, then $u$ is small in $\O$, with a precise quantification in terms of the norms of the potentials.

	It is well-known that unique continuation holds for general $V \in L^{q_0}(\Omega)$, $W_1 \in L^{q_1}(\Omega; \C^d)$, and $W_2 \in L^{q_2}(\Omega; \C^d)$, where $q_0 \geq d/2$, $q_1 \geq d$, and $q_2 \geq d$ (see \cite{Wolff-1993}). Moreover, strong unique continuation has even been proven in cases when $q_0 > d/2$, $q_1 > d$, and $q_2 > d$ (see \cite{koch2001carleman}). These classes of integrability for the potentials are sharp, as shown in \cite{Koch-Tataru-02}. However, establishing unique continuation results requires the use of a Carleman estimate and a delicate argument due to T. Wolff, see \cite{wolff1992property}, as also discussed in \cite{koch2001carleman}. This argument requires to choose a weight function in the Carleman estimate depending on the solution $u$ itself. The question of quantifying the unique continuation property with respect to the norms of the potentials is thus quite delicate, and this is the main novelty of our work. 
	
	We also mention the work \cite{malinnikova2012quantitative}, which quantifies unique continuation properties for the Laplace operator with lower-order terms in the sharp integrability class. However, it does not provide an explicit quantification with respect to the norms of the potentials. Again, since this work builds upon \cite{koch2001carleman}, as mentioned earlier, it remains unclear how the proof in \cite{malinnikova2012quantitative} can be made quantitative in terms of the norms of the potentials.

	When trying to quantify the unique continuation property with respect to the norms of the lower order terms, the known results rely only on the use of a Carleman estimate such as the one presented in \cite[Theorem 1.1]{Dehman-Ervedoza-Thabouti-2023}, which, as pointed out in \cite{barcelo1988weighted}, does not allow to go beyond $W_1 \in L^{\frac{3d-2}{2}}(\Omega; \C^d)$. This corresponds to what is done in \cite{davey2019quantitative,davey2020quantitative,Dehman-Ervedoza-Thabouti-2023} using $L^p$ Carleman estimate. The results in \cite{davey2019quantitative} describing the maximal order of vanishing of solutions of elliptic equations require $V$ and $W_1$ respectively in $L^{q_0}(\Omega)$ with $q_0 > d(3d-2)/(5d-2)$ and in $L^{q_1}(\Omega; \C^d)$ with $q_1 > (3d-2)/2$, and $W_2=0$. This was improved in \cite{Dehman-Ervedoza-Thabouti-2023} using new $L^p$ Carleman estimates allowing to handle $V$ and $W_1$ respectively in $L^{q_0}(\Omega)$ with $q_0 > d/2$ and in $L^{q_1}(\Omega; \C^d)$ with $q_1 > (3d-2)/2$, exhibiting the same dependence in $\|V \|_{L^{q_0}(\Omega)}$ as in \cite{davey2020quantitative}, which was limited to the case $W_1 = W_2 = 0$.
	
	We also point out that $L^2$ Carleman estimates does not allow to reach the sharp integrability class for potentials and lower order terms, see for instance \cite{Saut1982-sur-l'unitcite} where it is shown that one can obtain quantified unique continuation results for potentials $V \in L^{q_0}(\Omega)$ with $q_0 > 2d/3$. In fact, even if one uses $L^p$ Carleman estimates, one can obtain unique continuation results for potentials $V$ in the sharp class of integrability ($V \in L^{q_0}(\Omega)$ with $q_0 > d/2$), but under restrictive integrability conditions on $W_1$ and $W_2$, see for instance \cite{barcelo1988weighted} and \cite{Wolff-1993}.
	
	Theorem \ref{Thm-QUCP} is an improvement of \cite[Theorem 1.3]{Dehman-Ervedoza-Thabouti-2023}, since Theorem \ref{Thm-QUCP} allows non-trivial lower order terms $W_1\in L^{q_1}(\Omega; \C^d)$ and $W_2 \in L^{q_2}(\Omega; \C^d)$ with $q_1 > d$ and $q_2 >d$, while Theorem 1.3 in \cite{Dehman-Ervedoza-Thabouti-2023} is restricted to higher integrability class. Indeed, Theorem 1.3 in \cite{Dehman-Ervedoza-Thabouti-2023} states that if $W_1\in L^{q_1}(\Omega; \C^d)$ and $W_2 \in L^{q_2}(\Omega; \C^d)$, $q_1 >(3d-2)/2$, $q_2 > (3d-2)/2$ and $1/q_1 +1/q_2 < 4 (1 - 1/d )/ (3d-2)$, then any  solution $u\in H^{1}(\Omega)$ of \eqref{Elliptic-UC} satisfies  
	\begin{equation} \label{Quantitative-0-Previous}
	\norm{u }_{H^1(\O )} \leqslant 
	C 
	e^{
		C \left(\norm{V}_{L^{q_0}(\Omega)}^{\gamma(q_0)}
		+
		\| W_1\|_{L^{q_1}}^{\tilde \delta(q_1) } +  \| W_2\|_{L^{q_2}}^{\tilde \delta(q_2) } + \left(\| W_1\|_{L^{q_1}}\| W_1\|_{L^{q_2}}\right)^{\gamma(q_1,q_2) } 
		\right)
	}\norm{u}_{H^1(\omega)}^\alpha
	\norm{u}_{H^1(\Omega)}^{1-\alpha},
	\end{equation}
	%
	%
	with $$\tilde \delta(q ) = \frac{2}{ (1-\frac{(3d-2)}{2q} )} \quad \text{ and } \, \gamma(q_1, q_2) = \frac{1}{(1-\frac{1}{d})-(\frac{3}{4}-\frac{1}{2d})(\frac{d}{q_1}+\frac{d}{q_2})}.$$ 
	Accordingly, the dependence in terms of the norms of the potentials $W_1$ and $W_2$ is also weaker in \eqref{Quantitative-0} than in \eqref{Quantitative-0-Previous}, as $\delta$ given by \eqref{Conditions-gamma(q)-delta(q)} is smaller than $\tilde \delta$, even for $q > (3d-2)/2$. 
	
	In fact, this also suggests to analyze the optimality of the coefficients  $\gamma$ and $\delta$ in \eqref{Conditions-gamma(q)-delta(q)}, but this question is, to our knowledge, fully open, except when $W_1 = W_2 = 0$ and $V \in L^\infty(\Omega)$. Indeed, in this case, it is known that in dimension $d \geqslant 3$ the dependence of the constant in the quantification of unique continuation is of the form $C \exp( C \| V \|_{L^\infty}^{\frac{2}{3}})$, see \cite{Mes91,duyckaerts2008optimality}. This coincides with our estimate as $\gamma (\infty ) = 2/3$. 
	
	Our proof does not allow to derive an estimate up to the boundary of $\Omega$. This is due to a technical fact, coming from the use of Wolff's argument on one hand and of the use of the Carleman estimate in \cite{Dehman-Ervedoza-Thabouti-2023} on the other hand. Indeed, as we will see, roughly speaking, Wolff's argument requires the possibility to play with (the gradient of)  the weight function within the Carleman estimate. But the Carleman estimate in \cite{Dehman-Ervedoza-Thabouti-2023}, which we will use and revisit within this work, requires the boundary of the domain to be a level set of the weight function. These two conditions are thus not compatible and cause trouble when working in a neighborhood of the boundary.
	
	We also would like to emphasize that the Schrödinger operators in \eqref{Elliptic-UC} also include the consideration of potentials $V \in W^{-1,d+\epsilon}(\Omega)$, where $\epsilon>0$. Indeed, any such potential can be represented under the form $V \equiv V_0 + \div ( W)$, with $V_0 \in L^{d+\epsilon}(\Omega) \subset L^{\frac{d}{2}+\epsilon}(\Omega) $ and $W \in L^{d+\epsilon}(\Omega, \mathbb{C}^d)$ (see, for example, \cite[Theorem 3.9]{adams2003sobolev}). Consequently, one can then rewrite $Vu$ under the form $Vu = V_0 u + \div(W u) - W \cdot \nabla u$. 

	\medskip
	
	\noindent{\bf Outline.}
	Let us briefly comment on the structure of the article. In the next section, we first briefly recall an auxiliary result due to T. Wolff in \cite{wolff1992property} (cf. Lemma \ref{Lemma-Wolff}), followed by the presentation of some improved $L^p$ Carleman type estimates (cf. Theorem \ref{Carleman-General-1-Improved} in Section \ref{Auxiliary Results}) inspired by \cite{Dehman-Ervedoza-Thabouti-2023}. Following this, Section \ref{Sec-Proof-Thm-Carleman-Wolff-type-estimates} is devoted to the proof of these Carleman estimates. Then, in Section \ref{A specific geometric setting}, we establish a local quantification of the unique continuation within a specific geometric framework (cf. Lemma \ref{lemma-a-specific-geomertric-setting}). In Section \ref{Other geometries and proof of Theorem 1.1}, we subsequently employ this to establish a three balls estimate, which directly yields the quantitative unique continuation result stated in Theorem \ref{Thm-QUCP}, after a few classical manipulations.
	\medskip
	
	\noindent{\bf Notations.}  Let us finally introduce some of the notation that we will use throughout the article: 
	\begin{itemize}[topsep=0pt,itemsep=0pt,parsep=0pt]
		\item For every $x\in \R^{d}$, $x=(x_1,..,x_d)$, we  set $x=(x_1,x')$, where $x'=(x_2,..,x_d) \in \R^{d-1}$.  
		
		\item For $x \in \R^d$ and $r > 0$, $B_{x}(r)$ denotes the ball centered at $x \in \R^d$ and of radius $r > 0$.  
		
		\item  The notations $\nabla$ and $\Delta$ respectively stand for the gradient and the Laplacian with respect to $x=(x_1,..,x_d)$, and $\nabla'=(\partial_{2},..,\partial_{{d}})$ and $\Delta'= \sum_{j=2}^{d} \partial_{j}^2$ are, respectively, the vertical gradient and Laplacian operators. 
		\item  The Fourier transform is always taken to be the Fourier transform with respect to $x'=(x_2,..,x_d)$, and then its dual variable $\xi' \in \R^{d-1}$ is indexed by $\xi'=(\xi_2,..,\xi_d)$. Note that for a function $f$ defined on $\R^d$ (or a vertical strip $(X_0, X_1) \times \R^{d-1}$) such that $f(x_1,\cdot ) \in \mathscr{S}(\R^{d-1})$, $\hat f(x_1, \cdot)$ denotes the partial Fourier transform with respect to $x'$, that is: 
		\begin{equation}
		\label{Partial-Fourier-Transform}
		\hat f(x_1,\xi') = \frac{1}{(2\pi)^{\frac{d-1}{2}}} \int_{\R^{d-1}} e^{-\i  x' \cdot \xi'} f(x_1,x') \, dx', \qquad \qquad\xi' \in \R^{d-1}.
		\end{equation}
		%
		\item For a measurable subset $E$ of $\R^d$, we denote by $|E|$ its Lebesgue measure. 
	\end{itemize}

	%
	%
	\section{Auxiliary Results} \label{Auxiliary Results}
	
	In this section, we begin by recalling Wolff's lemma, and we present a refined version of the $L^p$ Carleman estimates obtained in \cite{Dehman-Ervedoza-Thabouti-2023}. These elements are the main points in the proof of Theorem \ref{Thm-QUCP}. 
	%
	%
	%
	\subsection{Wolff’s Lemma}
	
	We hereby present Wolff's argument, introduced in \cite{wolff1992property}:
	\begin{lemma}[{\cite[Lemma 1]{wolff1992property}}]
		\label{Lemma-Wolff} \label{exponential-decay}
		Suppose $\mu$ is a positive measure in $\R^d$ which has faster than exponential decay in the following sense
		\begin{equation}
		\lim_{T \to \infty} T^{-1}\log(\mu\{x \in \R^{d} \, , |x| \geq T\})=- \infty.  
		\end{equation}
		For $k \in \R^d$, define the measure $\mu_k$ by $d\mu_k(x) = e^{k\cdot x} d\mu(x)$. Suppose $\mathcal{C} \subset \mathbb{R}^d$ is a compact convex set. Then there is a family $ (k_j)_{j \in J}$ of elements of $\mathcal{C}$ and a family of two by two disjoint convex sets $(E_{k_j })_{j \in J}$ included in $\R^d$ so that the measures $d\mu_{k_j}$ are concentrated in $E_{k_j}$,
		\begin{equation} \label{concentration-property}
		\mu_{k_j}(\R^d \setminus (1+T) E_{k_j}) \leq \frac{1}{2} e^{- T /C_W} \| \mu_{k_j}\|, \quad \forall T \geq 0.  
		\end{equation} 
		and such that 
		\begin{equation} \label{summation-property}
		\sum_j  |E_{k_j}|^{-1} \geq C_W^{-1} |\mathcal{C}|,
		\end{equation}
		where $C_W$ is a positive constant depending only on $d$, and $(1 + T)E_{k_j}$ is the dilation of $E_{k_j}$ around its barycentre by a factor of $1 + T$.
	\end{lemma}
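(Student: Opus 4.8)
The statement is T.\ Wolff's combinatorial lemma, \cite[Lemma~1]{wolff1992property}; here is the strategy I would follow.

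\textbf{The convex function behind $\mu$.} After normalising $\|\mu\|=1$, introduce the log--Laplace transform $h(k):=\log\int_{\R^{d}}e^{k\cdot x}\,d\mu(x)$. The faster--than--exponential decay of $\mu$ makes $h$ finite on all of $\R^{d}$; it is smooth (differentiation under the integral sign) and convex (H\"older's inequality), with $\nabla h(k)$ equal to the barycentre $b(k)$ of the probability measure $\nu_{k}:=e^{-h(k)}\mu_{k}$ and $D^{2}h(k)$ equal to the covariance matrix of $\nu_{k}$. If $\mu$ is supported in a proper affine subspace the statement is trivial (every convex set carrying half of some $\mu_{k}$ is Lebesgue--null, so a single pair $(k_{1},E_{k_{1}})$ works, reading \eqref{summation-property} with the convention $|E_{k_{1}}|^{-1}=+\infty$), so I assume $D^{2}h\succ0$ throughout; then $\nabla h$ is a diffeomorphism from $\R^{d}$ onto the interior of the domain of the Legendre transform $h^{\ast}$, with Jacobian $\det D^{2}h$.

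\textbf{The concentration estimate \eqref{concentration-property}.} To a parameter $k$ one attaches a convex set $E_{k}$ centred at $b(k)$ and adapted to the scale of $\mu_{k}$: the natural choice is an intersection of slabs $\{x:|\theta\cdot(x-b(k))|\le A_{\theta}(k)\}$, $\theta$ ranging over a net of directions adapted to $D^{2}h(k)$, where $A_{\theta}(k)$ is the smallest half--width for which the slab carries all but a small dimensional fraction $\varepsilon_{0}$ of $\mu_{k}$; a union bound gives $\mu_{k}(E_{k})\ge\tfrac12\|\mu_{k}\|$, and Chebyshev's inequality in the one--dimensional marginals gives $A_{\theta}(k)\le\varepsilon_{0}^{-1/2}(\theta^{\top}D^{2}h(k)\theta)^{1/2}$, whence $|E_{k}|\lesssim_{d}(\det D^{2}h(k))^{1/2}$ and in particular $|E_{k}|^{-1}\gtrsim_{d}(\det D^{2}h(k))^{-1/2}$. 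For the tail one argues direction by direction: Markov's inequality gives, for $\lambda>0$,
\[
\mu_{k}\big(\{\theta\cdot(x-b(k))\ge t\}\big)\le\|\mu_{k}\|\,e^{\,g_{\theta}(\lambda)-\lambda t},\qquad g_{\theta}(\lambda):=h(k+\lambda\theta)-h(k)-\lambda\,\theta\cdot b(k),
\]
and $g_{\theta}$ is convex and vanishes to second order at $0$, so its Legendre transform $g_{\theta}^{\ast}$ is convex with $g_{\theta}^{\ast}(0)=0$; since dilating $E_{k}$ by $1+T$ replaces each $A_{\theta}(k)$ by $(1+T)A_{\theta}(k)$, convexity of $g_{\theta}^{\ast}$ upgrades the bound at scale $A_{\theta}(k)$ to exponential decay in $T$, and summing over the dimensionally many directions yields \eqref{concentration-property} with $C_{W}$ depending only on $d$. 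A caveat: this works only when $\nu_{k}$ essentially lives on a single scale --- if $\mu_{k}$ carries a small but non--negligible mass at a distance much larger than the size of $E_{k}$, the dilate $(1+T)E_{k}$ needs $T$ of that large order before it captures that mass, and \eqref{concentration-property} fails for this choice of $E_{k}$.

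\textbf{Selection and the summation estimate \eqref{summation-property}.} The family $(k_{j})$ is produced by a greedy/Vitali argument riding on the monotone diffeomorphism $\nabla h\colon\mathcal{C}\to\nabla h(\mathcal{C})=b(\mathcal{C})$. Its arithmetic is: each $E_{k}$ contains $b(k)$ and satisfies $|E_{k}|^{-1}\gtrsim_{d}(\det D^{2}h(k))^{-1/2}$, while $\nabla h$ has Jacobian $\det D^{2}h$; so a covering of $b(\mathcal{C})$ by bounded dilates of pairwise disjoint sets $E_{k_{j}}$ pulls back through $\nabla h$ to a covering of $\mathcal{C}$ with
\[
|\mathcal{C}|=\big|(\nabla h)^{-1}(b(\mathcal{C}))\big|\le\sum_{j}\big|(\nabla h)^{-1}(CE_{k_{j}})\big|\lesssim_{d}\sum_{j}|E_{k_{j}}|^{-1},
\]
which is \eqref{summation-property}. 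Concretely one processes $\mathcal{C}$ greedily, each time selecting a $k_{j}$ at which $\nu_{k_{j}}$ is well concentrated on a single scale --- so that $E_{k_{j}}$ simultaneously obeys \eqref{concentration-property} and $|E_{k_{j}}|\asymp(\det D^{2}h(k_{j}))^{1/2}$ --- removing the corresponding convex cell of $\mathcal{C}$, and keeping the $E_{k_{j}}$ disjoint by a Vitali extraction on the associated Chebyshev ellipsoids.

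\textbf{The main obstacle.} The hard part is the selection. In contrast with a textbook covering argument, there is no good single choice of $E_{k}$ valid on all of $\mathcal{C}$: at parameters $k$ for which $\nu_{k}$ carries a little mass at a scale much larger than its covariance, \eqref{concentration-property} forces $E_{k}$ to be elongated, which wrecks the comparison $|E_{k}|^{-1}\gtrsim(\det D^{2}h(k))^{-1/2}$ on which the count rests. One must therefore show (i) that the ``good'' parameters --- where $\nu_{k}$ sits on a single scale --- are abundant enough that the greedy exhaustion still captures a definite fraction of $|\mathcal{C}|$, and (ii) a quantitative, self--improving control on how much the local geometry of $\nabla h$ (that is, $D^{2}h$) can vary over the cells one removes, so that every constant stays dimensional. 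These two quantitative facts are the real content of Wolff's argument; by comparison the concentration estimate and the Vitali lemma for ellipsoids of arbitrary eccentricity are routine.
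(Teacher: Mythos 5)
First, a point of comparison: the paper does not prove this lemma at all --- it is quoted verbatim from \cite[Lemma 1]{wolff1992property} and used as a black box --- so the only thing to measure your proposal against is Wolff's original argument. Your sketch does reproduce the correct architecture of that argument: the log--Laplace transform $h$, its convexity, $\nabla h(k)$ as the barycentre of the tilted probability measure and $D^2h(k)$ as its covariance, Chernoff-type bounds for the tail estimate, and the change of variables through $\nabla h$ that converts a disjoint family in $x$-space into a lower bound for $\sum_j |E_{k_j}|^{-1}$ in terms of $|\mathcal{C}|$. That is a faithful roadmap.

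However, it is a roadmap and not a proof, and you say so yourself: the two facts you label (i) and (ii) --- that the parameters $k$ at which $\nu_k$ lives on a single scale are abundant enough for the greedy exhaustion to capture a definite fraction of $|\mathcal{C}|$, and that $D^2h$ is sufficiently stable over each removed cell for the Jacobian comparison to go through with dimensional constants --- are exactly the content of Wolff's stopping-time construction, and neither is established here. Without them, neither \eqref{concentration-property} nor \eqref{summation-property} is proved for any actual family $(k_j, E_{k_j})$: as you note, at a ``bad'' $k$ the set $E_k$ forced by \eqref{concentration-property} is too large for the count, and you give no mechanism for avoiding such $k$ while still covering $\mathcal{C}$. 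In addition, the closing claim that ``the Vitali lemma for ellipsoids of arbitrary eccentricity'' is routine is not correct as stated: Besicovitch/Vitali-type extraction fails for families of convex bodies of unbounded eccentricity in $\R^d$ (a thin slab meeting a small ball is not contained in any bounded dilate of it), so even the pairwise disjointness of the $E_{k_j}$ requires the finer selection in $k$-space that Wolff actually performs, not a generic covering lemma. So the proposal correctly locates the difficulty but leaves the lemma unproved.
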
  
	
	Lemma \ref{Lemma-Wolff} is the main argument in \cite{wolff1992property}, and the basis for the proof of unique continuation in \cite{wolff1992property} for potentials $V$ and $W_1$ in the sharp class of integrability (in \cite{wolff1992property}, the term $W_2$ is not considered).
	 
	%
	%
	\subsection{Carleman type estimates.} 
	
	We first present the Carleman estimates obtained in \cite[Theorem 1.1]{Dehman-Ervedoza-Thabouti-2023}:
	
	\begin{theorem}[{\cite[Theorem 1.1]{Dehman-Ervedoza-Thabouti-2023}}] \label{Thm-Carleman-estimates-DET}
		Let $d\geqslant 3$. Consider a bounded domain $\Omega \subset \R^{d}$ of class $C^3$, and non-empty open subsets $\omega_0$ and $\omega$ of $\Omega$ with $\omega_0 \Subset \omega \Subset \Omega$. 
		Let $\varphi \in C^3(\overline{\Omega})$ be such that 
		\begin{equation}
		\label{CarlemanWeight-Cond0} 
		\forall x \in \partial \Omega, \, \varphi(x) = 0 \text{ and } 
		\partial_n \varphi(x) < 0,
		\end{equation}
		and there exist $\alpha, \beta > 0$ for which 
		\begin{equation}
		\label{CarlemanWeight-Cond1} 
		\inf_{\overline{\Omega}\setminus \omega_0} \vert \nabla \varphi \vert > \alpha,
		\end{equation}
		and
		\begin{multline}
		\forall x \in \overline{\Omega} \setminus \omega_0, \, \forall \xi \in \mathbb{R}^d \text{ with } \vert \nabla \varphi(x)  \vert = \vert \xi \vert \text{ and } \nabla \varphi(x) \cdot \xi = 0, 
		\\	\label{CarlemanWeight-Cond2}
		({\rm Hess\,} \varphi(x)) \nabla \varphi(x) \cdot \nabla \varphi(x) +   ({\rm Hess\,} \varphi(x))  \xi  \cdot \xi  \geq \beta \vert \nabla \varphi(x) \vert^2, 
		\end{multline}
		where ${\rm Hess\,} \varphi$ denotes the Hessian matrix of $\varphi$. 
		
		Then there exist $C>0$ and $\tau_0 \geq 1$ (depending only on $\alpha$, $\beta$, $\| \varphi \|_{C^3(\overline\Omega)}$, and the geometric configuration of $\Omega$, $\omega$, and $\omega_0$) such that for all $u \in H^1_0(\Omega)$ satisfying 
		\begin{align}\label{Elliptic-Eq-general}
		- \Delta u = f_2 + f_{2*'} + \div F \quad  &\text{in } \D'(\Omega),
		\end{align}
		with $(f_2, f_{2*'}, F )$ satisfying
		\begin{equation}
		\label{Reg-Source-Terms-Improved-Gal-DET} 
		f_2 \in L^2(\Omega), 
		\ \ 
		f_{2*'} \in L^{ \frac{2d}{d+2}}(\Omega),
		\ \ 
		F \in L^2(\Omega; \C^{d}), 
		\end{equation}
		we have, for all $\tau \geq \tau_0$, 
		\begin{multline}
		\label{Carleman-General-1-DET}	
		\tau^{\frac{3}{2}} \| e^{\tau \varphi}  u \|_{L^2(\Omega)} +
		\tau^{\frac{1}{2}} \| e^{\tau \varphi}  \nabla u \|_{L^2(\Omega)} 
		\leq 
		C \left( 
		\| e^{\tau \varphi}  f_2 \|_{L^2(\Omega)}
		+ 
		\tau 	\| e^{\tau \varphi}  F \|_{L^2(\Omega)}
		\right.
		\\
		\left.
		+
		\tau^{\frac{3}{4}-\frac{1}{2d}}
		\| e^{\tau \varphi} f_{2*'} \|_{L^{\frac{2d}{d+2}} (\Omega)}
		+ 
		\tau^{\frac{3}{2}} \| e^{\tau \varphi} u\|_{L^2(\omega)}
		+ 
		\tau^{\frac{3}{4}}\| e^{\tau \varphi} u \|_{L^{\frac{2d}{d-2}} (\omega)}
		\right), 
		\end{multline}
		and
		\begin{multline}
		\label{Carleman-General-2-Improved-DET}		 
		\tau^{\frac{3}{4}+\frac{1}{2d}} \| e^{\tau \varphi} u \|_{L^{\frac{2d}{d-2}} (\Omega)}
		\leq 
		C \left( 
		\| e^{\tau \varphi}  f_2 \|_{L^2(\Omega)}
		+
		\tau 	\| e^{\tau \varphi}  F \|_{L^2(\Omega)}
		+\tau^{\frac{3}{4}+\frac{1}{2d}} 
		\| e^{\tau \varphi} f_{2*'} \|_{L^{\frac{2d}{d+2}}(\Omega)} 
		\right.
		\\
		\left.
		+ 
		\tau^{\frac{3}{2}} \| e^{\tau \varphi} u\|_{L^2(\omega)}
		+
		\tau^{\frac{3}{4}+\frac{1}{2d}} \| e^{\tau \varphi} u \|_{L^{\frac{2d}{d-2}} (\omega)}
		\right).
		\end{multline}
	\end{theorem}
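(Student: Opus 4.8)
The plan is to follow the classical scheme for $L^p$ Carleman estimates: conjugate $-\Delta$ by the weight $e^{\tau\varphi}$, reduce the assertion to an a priori estimate for the conjugated operator, establish the $L^2$ bounds through a positive–commutator argument driven by the pseudoconvexity hypotheses \eqref{CarlemanWeight-Cond1}--\eqref{CarlemanWeight-Cond2}, and then upgrade to the mixed $L^2$/$L^{\frac{2d}{d-2}}$/$L^{\frac{2d}{d+2}}$ scales by a Fourier restriction analysis (Stein--Tomas, in the Kenig--Ruiz--Sogge form) carried out after a partial Fourier transform in $x'$. Setting $v = e^{\tau\varphi}u \in H^1_0(\Omega)$, equation \eqref{Elliptic-Eq-general} is equivalent to
\begin{multline*}
P_\varphi v := -\Delta v + 2\tau\,\nabla\varphi\cdot\nabla v + \tau(\Delta\varphi)\,v - \tau^2|\nabla\varphi|^2 v \\
= e^{\tau\varphi}f_2 + e^{\tau\varphi}f_{2*'} + \div( e^{\tau\varphi}F) - \tau\,(\nabla\varphi\cdot F)\,e^{\tau\varphi};
\end{multline*}
the last term is first order in $\tau$ and gets absorbed on the left at the end, which is why $\|e^{\tau\varphi}F\|_{L^2}$ appears multiplied by $\tau$ in \eqref{Carleman-General-1-DET}--\eqref{Carleman-General-2-Improved-DET}.

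For the $L^2$ estimate, split $P_\varphi = S_\varphi + A_\varphi$ into its formally self-adjoint part $S_\varphi = -\Delta - \tau^2|\nabla\varphi|^2$ and formally skew-adjoint part $A_\varphi = 2\tau\,\nabla\varphi\cdot\nabla + \tau\Delta\varphi$, and expand $\|P_\varphi v\|_{L^2(\Omega)}^2 = \|S_\varphi v\|_{L^2}^2 + \|A_\varphi v\|_{L^2}^2 + \langle [S_\varphi,A_\varphi] v, v\rangle + \mathcal B$, where $\mathcal B$ collects the boundary integrals. Because $v\in H^1_0(\Omega)$, the boundary contributions reduce, modulo lower order, to a term proportional to $-\int_{\partial\Omega}\tau^3|\partial_n\varphi|^3|\partial_n v|^2$, which has the favourable sign thanks to \eqref{CarlemanWeight-Cond0}. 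The commutator $[S_\varphi,A_\varphi]$ is a second order operator (here the $C^3$ regularity of $\varphi$ enters) whose principal symbol, restricted to the characteristic cone $\{|\xi| = \tau|\nabla\varphi|,\ \nabla\varphi\cdot\xi = 0\}$, is exactly the left-hand side of \eqref{CarlemanWeight-Cond2}; together with the ellipticity \eqref{CarlemanWeight-Cond1} on $\overline\Omega\setminus\omega_0$ this produces $\|S_\varphi v\|_{L^2}^2 + \langle [S_\varphi,A_\varphi] v, v\rangle \gtrsim \tau^3\|v\|_{L^2(\Omega)}^2 + \tau\|\nabla v\|_{L^2(\Omega)}^2$ up to lower order terms and up to a contribution supported in $\omega_0$. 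Integrating that last contribution by parts and then using interior elliptic regularity — this is where $\omega_0 \Subset \omega$ is essential — to absorb it into the observation on $\omega$, one arrives, for $\tau\geq\tau_0$, at $\tau^{3/2}\|e^{\tau\varphi}u\|_{L^2(\Omega)} + \tau^{1/2}\|e^{\tau\varphi}\nabla u\|_{L^2(\Omega)} \lesssim \|e^{\tau\varphi}f_2\|_{L^2} + \tau\|e^{\tau\varphi}F\|_{L^2} + \tau^{3/2}\|e^{\tau\varphi}u\|_{L^2(\omega)}$.

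To bring in the $L^{\frac{2d}{d+2}}$ source term and the $L^{\frac{2d}{d-2}}$ norms — the genuinely new content beyond the Sobolev embedding applied to the $L^2$ estimate — one localizes by a partition of unity subordinate to a finite cover of $\overline\Omega\setminus\omega_0$ by small balls, in each of which a $C^3$ change of variables makes $\nabla\varphi$ close to $e_1$ and $\varphi$ equal to the convexified weight $x_1 + \frac{\varepsilon}{2}x_1^2$ up to errors absorbable by the $L^2$ estimate. Taking the partial Fourier transform in $x'$ then turns $P_\varphi$ into a one-parameter family, indexed by $\xi'\in\R^{d-1}$, of second order ODEs in $x_1$ of the form $-\partial_1^2 + 2\tau\partial_1 + (|\xi'|^2 - \tau^2) + (\text{lower order})$, and the desired inequality reduces to bounding the corresponding inverse from $L^{\frac{2d}{d+2}}$ to $L^{\frac{2d}{d-2}}$ uniformly in $\xi'$ and $\tau$. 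This is a Carleman analogue of the Kenig--Ruiz--Sogge uniform Sobolev inequality: one writes the inverse through its one-dimensional kernel, decomposes in $\xi'$ according to the distance of $|\xi'|^2$ to the degenerate value $\tau^2$, and in the near-degenerate regime invokes the Stein--Tomas restriction estimate for the sphere $\{|\xi'| = \tau,\ \xi_1 = 0\}$ to gain integrability; carrying the scaling through yields exactly the exponents $\tau^{\frac34 - \frac1{2d}}$ multiplying $\|e^{\tau\varphi}f_{2*'}\|_{L^{2d/(d+2)}}$ in \eqref{Carleman-General-1-DET}, the exponent $\tau^{\frac34 + \frac1{2d}}$ in \eqref{Carleman-General-2-Improved-DET} (both on the left and in front of $f_{2*'}$), and the $\tau^{3/4}$ on the $L^{2d/(d-2)}$ observation over $\omega$. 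The convexity term $\frac{\varepsilon}{2}x_1^2$, the errors from the change of variables, and the commutator errors generated by the partition of unity are all treated perturbatively and absorbed by the $L^2$ estimate once $\varepsilon$ is small and $\tau$ large; both displayed estimates then follow from the same analysis, the mixed-norm source in \eqref{Carleman-General-1-DET} being handled via duality against the $L^{2d/(d-2)}$ bound \eqref{Carleman-General-2-Improved-DET}.

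The main obstacle I expect is the uniform-in-$(\tau,\xi')$ control of the one-dimensional resolvent from $L^{\frac{2d}{d+2}}$ to $L^{\frac{2d}{d-2}}$, i.e. the Kenig--Ruiz--Sogge/Stein--Tomas step and, within it, pinning down the correct powers of $\tau$ in the near-degenerate frequency regime, where the gain is governed by the curvature of the characteristic sphere rather than by a crude integration by parts. A secondary but real difficulty lies in the bookkeeping: to absorb all the commutator and localization errors with \emph{no} loss in the $\tau$-exponents one must propagate the mixed $L^2$/$L^{\frac{2d}{d\pm 2}}$ norms simultaneously throughout the argument, rather than working with a single Lebesgue scale, and keep every observation term on $\omega$ instead of $\omega_0$ at each stage.
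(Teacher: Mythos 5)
First, a point of bookkeeping: this statement is not proved in the present paper at all --- it is quoted verbatim from \cite[Theorem 1.1]{Dehman-Ervedoza-Thabouti-2023}, and the only proof machinery the paper exhibits is for the refined version (Theorem \ref{Thm-Improved-Carleman-estimates}), which it says follows the same path. Measured against that path, your route is genuinely different in its first half. The paper never runs a positive-commutator/energy argument: both the $L^2$ and the $L^{2d/(d\pm 2)}$ bounds are extracted from an \emph{explicit parametrix} for the conjugated operator in a strip --- the partial Fourier transform factors it as $(\partial_1-\tau-\psi)(\partial_1-\tau+\psi)-\partial_1\psi$, the inverse is written through the kernels $k_{\tau,j}(x_1,y_1,\xi')$ of Proposition \ref{Prop-parametrix-Strip-1}, and everything (including the $\tau^{3/2}\|v\|_{L^2}+\tau^{1/2}\|\nabla v\|_{L^2}$ bound) follows from pointwise kernel estimates combined with Stein--Tomas on the ellipsoids $\Sigma_{x_1}$ and Young/Hardy--Littlewood--Sobolev in $x_1$. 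Your split into a Hörmander-type $L^2$ step plus a Kenig--Ruiz--Sogge resolvent step is the classical alternative; for the restriction-theorem half the two approaches coincide in substance. (Minor slip: for $v\in H^1_0$ the surviving boundary term is $\sim\tau\int_{\partial\Omega}(\partial_n\varphi)|\partial_n v|^2$, not $\tau^3|\partial_n\varphi|^3|\partial_n v|^2$; the $\tau^3$ term multiplies $|v|^2$ and vanishes.)

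The genuine gap is in your reduction to the model operator. You propose a \emph{fixed} finite cover of $\overline\Omega\setminus\omega_0$, a change of variables making $\varphi$ equal to $x_1+\tfrac{\varepsilon}{2}x_1^2$, and absorption of all errors ``by the $L^2$ estimate.'' But straightening $\varphi$ exactly turns $-\Delta$ into a variable-coefficient operator $-\sum g^{jk}(y)\partial_j\partial_k+\cdots$, and the restriction-theorem step needs frozen coefficients; freezing them on a ball of radius $r$ produces an error $r\|D^2v\|_{L^2}$, which the $L^2$ Carleman estimate does not control at all, together with a zeroth-order error of size $\tau^2 r\|v\|_{L^2}$, which is only absorbable by $\tau^{3/2}\|v\|_{L^2}$ if $r\ll\tau^{-1/2}$ --- so the localization scale cannot be $\tau$-independent. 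This is precisely why the paper localizes at scale $\tau^{-1/3}$ with cutoffs $\eta_{x_0}(x)=\eta(\tau^{1/3}(x-x_0))$, uses the specific quadratic change of variables \eqref{Choices-A-j} so that the frozen model retains the structured term $-y_1\sum_j\lambda_j\partial_{y_j}^2$ (rather than being constant-coefficient), and then glues by integrating over the centers $x_0$, invoking Minkowski's integral inequality to recombine the $L^{2d/(d\pm2)}$ norms without losing powers of $\tau$. Your sketch names this as a ``bookkeeping'' difficulty, but as written the proposed mechanism for handling it would fail; supplying the $\tau$-dependent localization scale and the averaged gluing is not optional.
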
 
	\begin{remark}
		The notations $2*$ and $2*'$ stem from the Sobolev's embedding $H^1(\Omega) \subset L^{2*}(\Omega)$, with $2* = 2d/(d-2)$ and $L^{2*'}(\Omega) \subset H^{-1}(\Omega)$, with $2*' = 2d/(d+2)$.
	\end{remark}
	
	In order to get Theorem \ref{Thm-QUCP}, we prove the following 
	refined version of Theorem \ref{Thm-Carleman-estimates-DET}, whose proof will be given in Section \ref{Sec-Proof-Thm-Carleman-Wolff-type-estimates}.

	\begin{theorem} \label{Thm-Improved-Carleman-estimates}
		Let $d\geqslant 3$. Consider a bounded domain $\Omega \subset \R^{d}$, and non-empty open subsets $\omega_0$ and $\omega$ of $\Omega$ with $\omega_0 \Subset \omega \Subset \Omega$.
		Let $\varphi \in C^3(\overline{\Omega})$ satisfying conditions \eqref{CarlemanWeight-Cond1}--\eqref{CarlemanWeight-Cond2}.
		
		Then, for all compact subset $K$ of $\Omega$, there exist $C>0$ and $\tau_0 \geq 1$ (depending only on $\alpha$, $\beta$, $\| \varphi \|_{C^3(\overline\Omega)}$, and the geometric configuration of $\Omega$, $\omega$, $\omega_0$ and $K$) such that for all $u \in H^1(\Omega)$ satisfying $\operatorname{supp} u \subset K$ and \eqref{Elliptic-Eq-general}
		with $(f_2, f_{2*'}, F = F_2 + F_{2*'})$ satisfying
		\begin{equation}
		\label{Reg-Source-Terms-Improved-Gal} 
		f_2 \in L^2(\Omega), 
		\ \ 
		f_{2*'} \in L^{ \frac{2d}{d+2}}(\Omega),
		\ \ 
		F_2 \in L^2(\Omega; \C^{d}), 
		\ \  \hbox{ and } \ \ 
		F_{2*'} \in L^{ \frac{2d}{d+2}}(\Omega; \C^d)\cap L^2(\Omega; \C^{d}),
		\end{equation}
		we have, for all $\tau \geq \tau_0$, 
		\begin{multline}
		\label{Carleman-General-1-Improved}	
		\tau^{\frac{3}{2}} \| e^{\tau \varphi}  u \|_{L^2(\Omega)} +
		\tau^{\frac{1}{2}} \| e^{\tau \varphi}  \nabla u \|_{L^2(\Omega)} 
		\leq 
		C \left( 
		\| e^{\tau \varphi}  f_2 \|_{L^2(\Omega)}
		+ 
		\tau 	\| e^{\tau \varphi}  F_2 \|_{L^2(\Omega)}+\tau^{\frac{1}{2}}   \| e^{\tau \varphi}  F_{2*'}\|_{L^2(\Omega)}
		\right.
		\\
		\left.
		+
		\tau^{\frac{3}{4}-\frac{1}{2d}}
		\left( \| e^{\tau \varphi} f_{2*'} \|_{L^{\frac{2d}{d+2}} (\Omega)}
		+ 
		\tau 	\| e^{\tau \varphi}  F_{2*'} \|_{L^{\frac{2d}{d+2}}(\Omega)}
		\right)
		+ 
		\tau^{\frac{3}{2}} \| e^{\tau \varphi} u\|_{H^1(\omega)}
		\right), 
		\end{multline}
		and, for all measurable sets $E$ of $\Omega$, 
		\begin{multline}
		\label{Carleman-General-2-Improved}		 
		\tau^{\frac{3}{4}+\frac{1}{2d}} \| e^{\tau \varphi} u \|_{L^{\frac{2d}{d-2}} (\Omega)}
		+ \tau^{\frac{3}{4}+\frac{1}{2d}}  \min\left\{\frac{1}{\tau |E|^{\frac{1}{d}}} ,1\right\}
		\left( \tau \| e^{\tau \varphi} u \|_{L^2(E)} + \| e^{\tau \varphi} \nabla u \|_{L^2(E)}\right) 
		\\
		\leq 
		C \left( 
		\| e^{\tau \varphi}  f_2 \|_{L^2(\Omega)}
		+
		\tau 	\| e^{\tau \varphi}  F_2 \|_{L^2(\Omega)}
		+\tau^{\frac{3}{4}+\frac{1}{2d}} \left(
		\|e^{\tau \varphi} f_{2*'} \|_{L^{\frac{2d}{d+2}}(\Omega)}  + \tau \| e^{\tau \varphi}F_{2*'} \|_{L^{\frac{2d}{d+2}}(\Omega)} + \|e^{\tau \varphi} F_{2*'} \|_{L^2(\Omega)} \right) 
		\right.
		\\
		\left.
		+ 
		\tau^{\frac{7}{4}+\frac{1}{2d}} \| e^{\tau \varphi} u\|_{H^1(\omega)}
		\right).
		\end{multline}
	\end{theorem}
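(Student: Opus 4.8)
The plan is to revisit the proof of Theorem~\ref{Thm-Carleman-estimates-DET} given in \cite{Dehman-Ervedoza-Thabouti-2023} and to upgrade it at three places: the removal of the boundary hypothesis \eqref{CarlemanWeight-Cond0}, the finer treatment of the divergence source $\div F_{2*'}$, and the extraction of the localized term over $E$ in \eqref{Carleman-General-2-Improved}.

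\emph{Step 1 (reduction to a compactly supported model problem).} Since $\operatorname{supp} u \subset K \Subset \Omega$, we extend $u$ by $0$ to an $H^1(\R^d)$ function supported in $K$, and, using that $-\Delta u$ is supported in $K$, we rewrite the right-hand side of \eqref{Elliptic-Eq-general} through a cutoff $\chi \equiv 1$ near $K$ so that $f_2, f_{2*'}, F_2, F_{2*'}$ may be assumed supported in a fixed compact subset $K' \Subset \Omega$ (the commutator terms $\nabla\chi\cdot F_2$ and $\nabla\chi\cdot F_{2*'}$ being in $L^2$, they are absorbed into $f_2$ at a cost, with the weight $e^{\tau\varphi}$, of at most $\|e^{\tau\varphi}F_2\|_{L^2}+\|e^{\tau\varphi}F_{2*'}\|_{L^2}$, which is admissible in both estimates). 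The role of \eqref{CarlemanWeight-Cond0} in \cite{Dehman-Ervedoza-Thabouti-2023} was only to accommodate non-vanishing boundary traces of $H^1_0$ functions; for compactly supported $u$ it is vacuous, and the localization scheme of \cite{Dehman-Ervedoza-Thabouti-2023} --- a partition of unity over a finite cover of $K'\setminus\omega$ by balls contained in $\Omega\setminus\overline{\omega_0}$, on each of which \eqref{CarlemanWeight-Cond1} permits straightening $\nabla\varphi$, together with the set $\omega$ --- never reaches $\partial\Omega$. This reduces \eqref{Carleman-General-1-Improved}--\eqref{Carleman-General-2-Improved} to model $L^p$ Carleman inequalities in a ball, for a second-order elliptic operator with Lipschitz coefficients and a weight satisfying the transplanted conditions \eqref{CarlemanWeight-Cond1}--\eqref{CarlemanWeight-Cond2}; the contribution of the piece supported near $\omega_0$ produces the observation terms, the $L^{2*}(\omega)$ norm appearing in \cite[Theorem~1.1]{Dehman-Ervedoza-Thabouti-2023} being bounded by $\|e^{\tau\varphi}u\|_{H^1(\omega)}$ via the Sobolev embedding $H^1\hookrightarrow L^{2*}$ (recall $\omega_0\Subset\omega$).

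\emph{Step 2 (model estimate and the $F_{2*'}$ term).} We re-run the Fourier restriction analysis of \cite{Dehman-Ervedoza-Thabouti-2023} on the model operator: taking the partial Fourier transform in $x'$ and estimating the resulting family of ordinary differential equations in $x_1$ frequency by frequency, the loss being concentrated on the characteristic set $|\xi'|\sim\tau$. The $L^2$ source $f_2$ and the $L^2$ divergence source $\div F_2$ are handled exactly as in \cite{Dehman-Ervedoza-Thabouti-2023}, and $f_{2*'}\in L^{2*'}$ by the Sobolev-dual (Kenig--Ruiz--Sogge type) bound, producing the powers $\tau^{3/4-1/2d}$, resp.\ $\tau^{3/4+1/2d}$, in \eqref{Carleman-General-1-Improved}, resp.\ \eqref{Carleman-General-2-Improved}. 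For $\div F_{2*'}$ with $F_{2*'}\in L^{2*'}\cap L^2$ we proceed in two complementary ways. On one hand, writing $e^{\tau\varphi}\div F_{2*'}=\div(e^{\tau\varphi}F_{2*'})-\tau\, e^{\tau\varphi}\nabla\varphi\cdot F_{2*'}$ and pairing the divergence against the test functions (i.e.\ treating it by duality in $\dot W^{-1,2*'}$), one sees that the divergence raises the order by one and therefore costs exactly one factor of $\tau$ on the characteristic set, so that $\div F_{2*'}$ enters like $f_{2*'}$ with an extra $\tau$, namely with a factor $\tau\cdot\tau^{3/4\mp1/2d}\|e^{\tau\varphi}F_{2*'}\|_{L^{2*'}}$ (the sign according to whether the $L^2$ or the $L^{2*}$ norm of $u$ is read off on the left), the zero-order remainder $\tau\,e^{\tau\varphi}\nabla\varphi\cdot F_{2*'}$ contributing the term $\tau^{3/4+1/2d}\|e^{\tau\varphi}F_{2*'}\|_{L^2}$ in \eqref{Carleman-General-2-Improved}. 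On the other hand, the energy identity $\|e^{\tau\varphi}\nabla u\|_{L^2}^2=-\operatorname{Re}\!\int e^{2\tau\varphi}\Delta u\,\bar u-2\tau\operatorname{Re}\!\int e^{2\tau\varphi}\nabla\varphi\cdot\nabla u\,\bar u$, in which the divergence source is integrated by parts, bounds the $\div F_{2*'}$ contribution by $\lesssim\|e^{\tau\varphi}F_{2*'}\|_{L^2}\|e^{\tau\varphi}\nabla u\|_{L^2}+\tau\|e^{\tau\varphi}F_{2*'}\|_{L^2}\|e^{\tau\varphi}u\|_{L^2}$; together with the absorption of $\|e^{\tau\varphi}\nabla u\|_{L^2}^2$ and the $L^2$ Carleman control of $\tau\|e^{\tau\varphi}u\|_{L^2}$, this yields only the cost $\tau^{1/2}\|e^{\tau\varphi}F_{2*'}\|_{L^2}$ in \eqref{Carleman-General-1-Improved}. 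Combining these accounts for all occurrences of $F_{2*'}$.

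\emph{Step 3 (the localized term over $E$, and conclusion).} The $u$-part of the new left-hand term of \eqref{Carleman-General-2-Improved} is free: since $\tfrac12=\tfrac1{2*}+\tfrac1d$, Hölder's inequality gives $\|e^{\tau\varphi}u\|_{L^2(E)}\leq|E|^{1/d}\|e^{\tau\varphi}u\|_{L^{2*}(\Omega)}$, so that $\tau^{3/4+1/2d}\min\{1/(\tau|E|^{1/d}),1\}\cdot\tau\|e^{\tau\varphi}u\|_{L^2(E)}\leq\tau^{3/4+1/2d}\min\{1,\tau|E|^{1/d}\}\|e^{\tau\varphi}u\|_{L^{2*}(\Omega)}\leq\tau^{3/4+1/2d}\|e^{\tau\varphi}u\|_{L^{2*}(\Omega)}$, already present on the left. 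The $\nabla u$-part is obtained from the model estimate itself, whose natural localization scale is $\tau^{-1}$: when $|E|^{1/d}\gtrsim\tau^{-1}$ one controls $\|e^{\tau\varphi}\nabla u\|_{L^2(E)}$ through the global gradient bound (this is why \eqref{Carleman-General-2-Improved} carries the damping factor $1/(\tau|E|^{1/d})$ and the inflated observation power $\tau^{7/4+1/2d}$), while when $|E|^{1/d}\lesssim\tau^{-1}$ one covers $E$ by a ball of radius $\sim\tau^{-1}$, over which $e^{\tau\varphi}$ is comparable to a constant, and applies an interior (Caccioppoli-type) estimate to reduce $\|e^{\tau\varphi}\nabla u\|_{L^2(E)}$ to $\tau\|e^{\tau\varphi}u\|_{L^2}$ plus suitably weighted source norms, concluding by the bounds already obtained and the Hölder step above. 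Restoring the weight $e^{\tau\varphi}$ and summing the local estimates of Step~1 yields \eqref{Carleman-General-1-Improved}--\eqref{Carleman-General-2-Improved}.

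\emph{Main difficulty.} The delicate point is Step~2: one must re-run the sharp $L^p$ (Fourier restriction) Carleman machinery of \cite{Dehman-Ervedoza-Thabouti-2023} while simultaneously keeping the optimal small powers of $\tau$ in front of every occurrence of $F_{2*'}$ and producing the frequency-localized gradient information needed in Step~3, all within the variable-coefficient model obtained after straightening. By contrast, the removal of \eqref{CarlemanWeight-Cond0} (Step~1) and the $u$-part of the $E$-term (Step~3) are essentially bookkeeping.
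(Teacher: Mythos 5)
Your overall architecture (localize to a model problem via the compact support of $u$, run the Fourier-restriction parametrix of \cite{Dehman-Ervedoza-Thabouti-2023}, then glue) matches the paper's, and Step 1 is essentially correct. The genuine gap is in Step 2, in the treatment of $\div F_{2*'}$. Your ``way (a)'' rests on the claim that the divergence ``costs exactly one factor of $\tau$'', so that $\div F_{2*'}$ can be fed into the $f_{2*'}$ slot with an extra $\tau$. This is only true on the frequency region $|\xi'|\lesssim\tau$; for data whose partial Fourier transform lives at $|\xi'|\sim\Lambda\gg\tau$ the divergence costs a factor $\Lambda$, and the $L^{2*'}$-based restriction bounds for the operators $K_{\tau,j}=\partial_j K_{\tau,0}$ do not recover the excess $\Lambda/\tau$ (the relevant kernel $\xi_j k_{\tau,0}$ is $O(1)$ rather than $O(\lambda^{-1})$ at high frequency, which destroys the integrability needed for the Stein--Tomas/HLS step in $\mathscr{L}(L^{2*'},L^{2*})$). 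Your ``way (b)'' (the energy identity) is circular: to absorb the term $\tau\|e^{\tau\varphi}F_{2*'}\|_{L^2}\|e^{\tau\varphi}u\|_{L^2}$ at the advertised cost $\tau^{1/2}\|e^{\tau\varphi}F_{2*'}\|_{L^2}$ you must already know $\tau^{3/2}\|e^{\tau\varphi}u\|_{L^2}\leq$ RHS of \eqref{Carleman-General-1-Improved}, and that bound is exactly what way (a) fails to deliver. The missing idea is the high-frequency projector $P_{hf,\tau}$ of \eqref{Def-P-hf}: the paper splits $F_{2*'}=P_{hf,\tau}F_{2*'}+(I-P_{hf,\tau})F_{2*'}$ inside the parametrix (see \eqref{parametrix2}), uses the ellipticity of the conjugated operator away from the characteristic set to prove the improved $L^2$-based bounds of Propositions \ref{Prop-OpNorm-Est-K0-Improved}--\ref{Prop-OpNorm-Est-K1-Improved} (this is where the $L^2$ norm of $F_{2*'}$ enters with the small powers $\tau^{1/2}$ and $\tau^{3/4+1/2d}$), and at low frequency converts $\div'$ into a factor $\tau$ by Bernstein while treating the normal component through the dedicated bound on $(I-P_{hf,\tau})K_{\tau,1}$ acting on $L^{2*'}$ (Proposition \ref{Prop-OpNorm-Est-K1-Improved-LF}). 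Without this decomposition (or an equivalent device) neither occurrence of $F_{2*'}$ on the right-hand sides can be justified.

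There is a second, smaller gap in Step 3: for the gradient part of the localized term you propose, when $|E|^{1/d}\lesssim\tau^{-1}$, to ``cover $E$ by a ball of radius $\sim\tau^{-1}$''. But $E$ is an arbitrary measurable subset of $\Omega$; small measure does not imply small diameter, so no such covering exists in general, and a Caccioppoli estimate on a single ball cannot see all of $E$. The paper's argument avoids any geometry of $E$: at low frequency it combines H\"older ($\|\cdot\|_{L^2(E)}\leq|E|^{1/d}\|\cdot\|_{L^{2*}}$) with Bernstein ($\|\nabla' w_{lf}\|_{L^{2*}}\lesssim\tau\|w_{lf}\|_{L^{2*}}$), while at high frequency and for the $\partial_1$ component it simply uses the global $L^2$ bounds, which carry the elliptic gain. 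Your H\"older step for the $u$-part of the $E$-term is fine, but the gradient part needs the frequency splitting here as well.
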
 

	\begin{remark}\label{remark-Improved-Compared-to-BET}
		Theorem \ref{Thm-Improved-Carleman-estimates} presents several new features compared to Theorem \ref{Thm-Carleman-estimates-DET}: 
		\begin{itemize}[topsep=0pt,itemsep=0pt,parsep=0pt]
			\item One of the main difference between Theorem \ref{Thm-Improved-Carleman-estimates} and Theorem \ref{Thm-Carleman-estimates-DET} is that Theorem  \ref{Thm-Improved-Carleman-estimates} allows a source term of the form $\div(F_{2*'})$ with $F_{2 *'} \in L^{ \frac{2d}{d+2}}(\Omega; \C^d)\cap L^2(\Omega; \C^{d})$, and quantifies the estimate on the solution $u$ of \eqref{Elliptic-Eq-general} in terms of the $L^{ \frac{2d}{d+2}}(\Omega; \C^d)\cap L^2(\Omega; \C^{d})$-norm of $F_{2*'}$. Note that the $L^2(\Omega; \C^{d})$-norm of $F_{2*'}$ appearing in the right hand sides of \eqref{Carleman-General-1-Improved}--\eqref{Carleman-General-2-Improved} appears with a power of the Carleman parameter which is strictly smaller than the one appearing for the $L^2(\Omega; \C^{d})$-norms of $F_{2}$. In some sense, for $F_{2*'}$, the loss of ellipticity of the conjugated Laplace operator $e^{\tau \varphi} \Delta (e^{-\tau \varphi} \cdot) $ appears within the $L^{ \frac{2d}{d+2}}(\Omega; \C^d)$-norm of $F_{2*'}$ and not within its $L^{ 2}(\Omega; \C^d)$-norm.
			
			\item Estimate \eqref{Carleman-General-2-Improved} also presents an estimate of the $H^1$-norm of $u$ on measurable sets $E$. Similar improvements appear in the work \cite[Section 6]{wolff1992property}. Note that these estimates are particularly relevant when $|E| \lesssim \tau^{-d}$, that is on small measurable subsets, for which we get from \eqref{Carleman-General-2-Improved} an estimate on $\tau^{\frac{7}{4}+ \frac{1}{2d}} \| e^{\tau \varphi} u \|_{L^2(E)} + \tau^{\frac{3}{4}+ \frac{1}{2d}} \| e^{\tau \varphi} \nabla u \|_{L^2(E)}$.  This estimate is reasonable and should be compared with the term $\tau^{\frac{3}{4}+ \frac{1}{2d}} \| e^{\tau \varphi} u \|_{L^{\frac{2d}{d-2}}(\Omega)}$  
			since, by H\"older's estimate, 
			$$
			\tau^{\frac{7}{4}+ \frac{1}{2d}} \| e^{\tau \varphi} u \|_{L^2(E)} \leq \tau^{\frac{3}{4}+ \frac{1}{2d}}\| e^{\tau \varphi} u \|_{L^{\frac{2d}{d-2}}(E)} (\tau |E|^{\frac{1}{d}}) 
			\lesssim \tau^{\frac{3}{4}+ \frac{1}{2d}}\| e^{\tau \varphi} u \|_{L^{\frac{2d}{d-2}}(\Omega)}. 
			$$
			Note however that for large sets $E$, the estimate in  \eqref{Carleman-General-2-Improved} of the $L^2$ norm of $e^{\tau \varphi} u$ is worse than the estimate given by \eqref{Carleman-General-1-Improved}. 
			\item Regarding the observation terms, i.e. the terms involving norms on $\omega$, the estimates \eqref{Carleman-General-1-Improved} and \eqref{Carleman-General-2-Improved} involve the $H^1(\omega)$-norm of $u$ instead of weaker norms as in \eqref{Carleman-General-1-DET} and \eqref{Carleman-General-2-Improved-DET}. In fact, we write it for convenience, as it will be of no impact on the result stated in Theorem \ref{Thm-QUCP}. 
			
			\item In this work, our focus is on deriving local Carleman estimates (in other words, we consider only functions $u$ which are compactly supported). Nevertheless, let us point that, following the proof of Theorem \ref{Thm-Carleman-estimates-DET} in \cite{Dehman-Ervedoza-Thabouti-2023} (in particular Subsection 6.4), Theorem \ref{Thm-Improved-Carleman-estimates} can be extended to functions  $u \in H^1(\Omega)$ with possibly non-homogeneous Dirichlet boundary conditions.
		\end{itemize}
	\end{remark} 
	%
	\section{Improved Carleman estimates: Proof of Theorem \ref{Thm-Improved-Carleman-estimates}} \label{Sec-Proof-Thm-Carleman-Wolff-type-estimates}
	
	This section is devoted to the proof of Theorem \ref{Thm-Improved-Carleman-estimates}. 
	
	Although Theorem \ref{Thm-Improved-Carleman-estimates} looks rather close to Theorem \ref{Thm-Carleman-estimates-DET}, we will need to revise in depth the proof of Theorem \ref{Thm-Carleman-estimates-DET} given in \cite{Dehman-Ervedoza-Thabouti-2023} and incorporate some changes along the way. 
	
	The key step in the proof of Theorem \ref{Thm-Carleman-estimates-DET} is to prove  suitable estimates on the inverse of an operator of the form 
	\begin{equation}
	\label{Conjugated-Operator-strip}
	\Delta -x_{1} \sum_{j=2}^{d} \lambda_{j} \partial_{j}^{2}  
	- 2 \tau \partial_1 + \tau^2, 
	\end{equation}
	in a vertical strip  
	\begin{equation}
	\label{Def-Strip}
	\Omega = (X_0,X_1) \times \R^{d-1}, \text{ with } X_0 < 0 < X_1 \text{ and } \max\{|X_0|, X_1|\} \leq 1, 
	\end{equation}
	where the coefficients $(\lambda_j)_{j \in \{1,\cdots,d\}} \in \R^d$ satisfy $\lambda_1 = 0$, 
	\begin{equation}
	\label{Coercivity}
	\exists c_0 >0, \quad \forall x_1 \in [X_0, X_1],\,  \forall \xi \in \R^{d}, \quad 
	\frac{1}{c_0^2} |\xi|^2 \leq \sum_{j =1}^d (1 - x_1 \lambda_j) |\xi_j|^2 \leq c_0^2 |\xi|^2, 
	\end{equation}
	and 
	\begin{equation}
	\label{Pseudo-Convexity-Strip}
	0< m_*\leq \min_{j \in \{2, \cdots, d\}} \lambda_j 
	\leq \max_{j \in \{2, \cdots, d\}} \lambda_j \leq M_*.
	\end{equation}
	Here, $\tau \geq 1$ plays the role of the Carleman parameter, as the operator in \eqref{Conjugated-Operator-strip} coincides with 
	$$
	e^{\tau x_1} \left(\Delta -x_{1} \sum_{j=2}^{d} \lambda_{j} \partial_{j}^{2}  \right) (e^{-\tau x_1} \cdot ).
	$$
	This really is the main step of the proof of Theorem \ref{Thm-Carleman-estimates-DET}, as one can then use the local character of Carleman estimates to recover Theorem \ref{Thm-Carleman-estimates-DET} in its full generality (see \cite{Dehman-Ervedoza-Thabouti-2023} for details; this strategy will be briefly recalled in Section \ref{Subsec-Local+Glue}). 
	
	Our proof of Theorem \ref{Thm-Improved-Carleman-estimates} follows the same path, and the key estimate is a refined estimate on the inverse of the operator \eqref{Conjugated-Operator-strip} in a strip, which will be done in Subsection \ref{subsec-Carleman-strip}. Once this will be done, the proof of Theorem \ref{Thm-Improved-Carleman-estimates} can be deduced similarly as in \cite{Dehman-Ervedoza-Thabouti-2023} by a suitable localization process, which is rapidly explained in Subsection \ref{Subsec-Local+Glue} for the convenience of the reader. 
	
	%
	%
	%
	\subsection{Main step: Carleman estimates for solutions in a strip.} \label{subsec-Carleman-strip}
	
	In all this section, $\Omega$ is a strip of the form $(X_0,X_1) \times \R^{d-1}$ as in \eqref{Def-Strip}.

	Our goal in this subsection is to prove the following Carleman estimate, which is a refined version of \cite[Theorem 2.2]{Dehman-Ervedoza-Thabouti-2023}.
	
	\begin{theorem}
		\label{Thm-Carleman-Strip-2-improved}
		Let $\Omega = (X_0,X_1) \times \R^{d-1}$ as in \eqref{Def-Strip}, with $d \geq 3$, and assume that the coefficients $(\lambda_j)_{j \in \{1,\cdots,d\}} \in \R^d$ satisfy $\lambda_1 = 0$,  \eqref{Coercivity} and \eqref{Pseudo-Convexity-Strip}.	Then there exist constants $C>0$ and $\tau_0 \geq 1$ depending on $c_0$, $m_*$ and $M_*$, all independent of $X_0, X_1$, such that
		for all $\tau \geq \tau_0$, for all $w \in H^1(\Omega)$ compactly supported in $\Omega$ and satisfying
		\begin{equation}
		\label{Elliptic-Strip-w}
		\Delta w-x_{1} \sum_{j=2}^{d} \lambda_{j} \partial_{j}^{2}  w
		- 2 \tau \partial_1 w + \tau^2 w = f_2 + f_{2*'} + \div (F_2 + F_{2*'}), \qquad \hbox{ in } \Omega, 
		\end{equation}
		with 
		\begin{equation}
		\label{Reg-Source-Terms-Strip-Improved}
		f_2 \in L^2(\Omega), 
		\ \ 
		f_{2*'} \in L^{\frac{2d}{d+2}}(\Omega),
		\ \ 
		F_2 \in L^2(\Omega; \C^{d}), 
		\ \ 
		\hbox{and} 
		\ \  F_{2*'} \in L^{\frac{2d}{d+2}}(\Omega; \C^{d}) \cap L^2(\Omega; \C^{d}),
		\end{equation}
		we have
		\begin{multline}
		\label{Carleman-Strip-w-1-Improved}
		\tau^{\frac{3}{2}} \| w\|_{L^2(\Omega)}
		+ 
		\tau^{\frac{1}{2}} \| \nabla w \|_{L^2(\Omega)}
		\\
		\leq
		C\left(
		\| f_2 \|_{L^2(\Omega)} +
		\tau \| F_2 \|_{L^2(\Omega)}
		+ 
		\tau^{ \frac{3}{4}-\frac{1}{2d}} ( \| f_{2*'} \|_{L^{\frac{2d}{d+2}}(\Omega)}+ \tau \| F_{2*'} \|_{L^{\frac{2d}{d+2}}(\Omega)}) 
		+ 
		\tau^{\frac{1}{2}}  \| F_{2*'} \|_{L^2(\Omega)}  
		\right),
		\end{multline}
		and, and for all measurable subsets $E$ of $\Omega$,
		\begin{multline}
		\label{Carleman-Strip-w-2-Improved}
		\tau^{\frac{3}{4}+\frac{1}{2d}} \| w\|_{L^{\frac{2d}{d-2}}(\Omega)} 
		+ 
		\tau^{\frac{3}{4}+\frac{1}{2d}}  \min\left\{\frac{1}{\tau |E|^{\frac{1}{d}}} ,1\right\}
		\left( \tau \| w \|_{L^2(E)} + \| \nabla w \|_{L^2(E)}\right) 
		\\
		\leq
		C\left(
		\| f_2 \|_{L^2(\Omega)} 
		+  
		\tau \| F_2 \|_{L^2(\Omega)}
		+\tau^{\frac{3}{4}+\frac{1}{2d}} \left(
		\| f_{2*'} \|_{L^{\frac{2d}{d+2}}(\Omega)}  + \tau \| F_{2*'} \|_{L^{\frac{2d}{d+2}}(\Omega)} \right)  + 
		\tau^{\frac{3}{4}+\frac{1}{2d}}  \| F_{2*'} \|_{L^2(\Omega)}
		\right).
		\end{multline}
	\end{theorem}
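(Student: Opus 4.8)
The plan is to follow the route of \cite[Theorem 2.2]{Dehman-Ervedoza-Thabouti-2023}, revisited so as to incorporate the two genuinely new features: the source term $\div(F_{2*'})$ and the estimate on an arbitrary measurable set $E$. Write $P_\tau := \Delta - x_1\sum_{j=2}^{d}\lambda_j\partial_j^2 - 2\tau\partial_1 + \tau^2$ for the operator in \eqref{Elliptic-Strip-w}. First I would take the partial Fourier transform in $x'$, turning \eqref{Elliptic-Strip-w} into, for each frozen $\xi'\in\R^{d-1}$, a one-dimensional equation in $x_1$ for $\hat w(\cdot,\xi')$, governed by the operator $(\partial_1-\tau)^2 - \mu(x_1,\xi')$ with $\mu(x_1,\xi') := \sum_{j=2}^{d}(1-x_1\lambda_j)\xi_j^2$; by \eqref{Coercivity} one has $c_0^{-2}|\xi'|^2\le\mu(x_1,\xi')\le c_0^2|\xi'|^2$, and $\hat w(\cdot,\xi')$ is supported in a fixed compact subinterval of $(X_0,X_1)$. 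Removing the exponential factor $e^{-\tau x_1}$ turns the homogeneous equation into an Airy equation, thanks to the affine dependence of $\mu$ on $x_1$; one then constructs a right inverse of the one-dimensional operator via its Green kernel $G_{\tau,\xi'}(x_1,y_1)$, together with pointwise bounds on $G_{\tau,\xi'}$ and $\partial_{x_1}G_{\tau,\xi'}$, exactly as in \cite{Dehman-Ervedoza-Thabouti-2023}. These bounds exhibit the usual three regimes, according to whether $|\xi'|\ll\tau$ (the $\tau^2$ term dominates and two powers of $\tau$ are gained), $|\xi'|\gg\tau$ (the elliptic regime), or $|\xi'|\sim\tau$ (the critical region, where only algebraic gains in $\tau$ survive and the oscillation of the Airy function must be exploited). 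I would reuse these kernel estimates verbatim.

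Granting them, the $L^2$-estimate \eqref{Carleman-Strip-w-1-Improved} for the sources $f_2\in L^2$ and $\div F_2$ with $F_2\in L^2$ follows from the corresponding one-dimensional operator-norm bounds on $L^2(dx_1)$, integrated against $d\xi'$ by Plancherel, and this part is already contained in \cite{Dehman-Ervedoza-Thabouti-2023}; likewise the $L^{\frac{2d}{d-2}}(\Omega)$-part of \eqref{Carleman-Strip-w-2-Improved} is obtained there by feeding the $\xi'$-multiplier bounds into a Fourier restriction / uniform Sobolev estimate of Kenig-Ruiz-Sogge type attached to the sphere $\{|\xi'|=\tau\}$, which is precisely what produces the exponents $\frac{2d}{d+2}$ and $\frac{2d}{d-2}$. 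The first new ingredient is the treatment of the source $\div(F_{2*'})$ with $F_{2*'}\in L^{\frac{2d}{d+2}}(\Omega;\C^d)\cap L^2(\Omega;\C^d)$. Here I would integrate the divergence by parts so as to move one derivative onto the Green kernel, and split the resulting operator into an elliptic piece and a piece microlocalized near the characteristic set $\{\xi_1=0,\ |\xi'|=\tau\}$. The elliptic piece is bounded on $L^2$ by $C\tau^{1/2}\|F_{2*'}\|_{L^2}$ --- it is nothing but the $L^2$ elliptic estimate for $\nabla P_\tau^{-1}\div$, i.e.\ the cheap term in \eqref{Carleman-Strip-w-1-Improved} --- whereas the near-characteristic piece is controlled only through the restriction mechanism, which sees the $L^{\frac{2d}{d+2}}$-norm; since the extra derivative contributes one more power of $\tau$ on the sphere of radius $\sim\tau$, this piece produces the term $C\tau^{\frac{3}{4}-\frac{1}{2d}}\tau\|F_{2*'}\|_{L^{\frac{2d}{d+2}}}$ in \eqref{Carleman-Strip-w-1-Improved}, and in \eqref{Carleman-Strip-w-2-Improved} the analogous term involving $\tau^{\frac{3}{4}+\frac{1}{2d}}\|F_{2*'}\|_{L^{\frac{2d}{d+2}}}$ (with an extra power of $\tau$) together with $\tau^{\frac{3}{4}+\frac{1}{2d}}\|F_{2*'}\|_{L^2}$. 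This realizes the principle recorded in the first bullet of Remark \ref{remark-Improved-Compared-to-BET}: the loss of ellipticity is shifted onto the $L^{\frac{2d}{d+2}}$-norm of $F_{2*'}$, not onto its $L^2$-norm.

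The second new ingredient is the estimate on an arbitrary measurable set $E\subset\Omega$ on the left-hand side of \eqref{Carleman-Strip-w-2-Improved}. The mechanism is Hölder's inequality on $E$: for any $g$ one has $\|g\|_{L^2(E)}\le|E|^{1/d}\|g\|_{L^{\frac{2d}{d-2}}(E)}\le|E|^{1/d}\|g\|_{L^{\frac{2d}{d-2}}(\Omega)}$. Applying this with $g=w$ and $g=\nabla w$, and feeding in the global $L^{\frac{2d}{d-2}}(\Omega)$-estimate already established (supplemented, for $\nabla w$, by elliptic regularity for $P_\tau$, which relates $\|\nabla w\|$ to $\tau\|w\|$ plus source terms), one bounds $\tau\|w\|_{L^2(E)}+\|\nabla w\|_{L^2(E)}$ by $|E|^{1/d}$ times a constant multiple of the right-hand side of \eqref{Carleman-Strip-w-2-Improved} divided by $\tau^{\frac{3}{4}+\frac{1}{2d}}$. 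The cut-off factor $\min\{(\tau|E|^{1/d})^{-1},1\}$ in the statement is exactly what turns this into a clean inequality, since its product with $\tau|E|^{1/d}$ is bounded by $1$; in particular the estimate is only informative for $|E|\lesssim\tau^{-d}$, as noted in Remark \ref{remark-Improved-Compared-to-BET}. Such small-set estimates are of the same nature as those in \cite[Section 6]{wolff1992property}.

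As for the main difficulty: conceptually the proof is a careful bookkeeping on top of \cite{Dehman-Ervedoza-Thabouti-2023}, and the delicate point is to make the powers of $\tau$ match in the critical region $|\xi'|\sim\tau$. One must verify that, after the integration by parts in the $\div(F_{2*'})$ term, the derivative falling on $G_{\tau,\xi'}$ produces exactly one extra power of $\tau$ in the $L^{\frac{2d}{d+2}}\to L^{\frac{2d}{d-2}}$ estimate and no additional loss in the $L^2\to L^2$ estimate, and that the splitting of the solution operator into an elliptic piece and a near-characteristic piece can be carried out compatibly with both the $L^2$ and the $L^{\frac{2d}{d-2}}$ estimates, uniformly in $X_0$ and $X_1$. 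Keeping track of the analogous $\tau$-powers in the gradient-on-$E$ term of \eqref{Carleman-Strip-w-2-Improved} is the other place where one must be attentive; everything else reduces to the computations already carried out in \cite{Dehman-Ervedoza-Thabouti-2023}.
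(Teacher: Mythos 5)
Your overall architecture matches the paper's: an explicit parametrix built from the partial Fourier transform in $x'$ (Proposition \ref{Prop-parametrix-Strip-1}), kernel bounds fed into the Stein--Tomas machinery of Proposition \ref{Prop-Est-Fourier-Op}, and, for the new source $\div(F_{2*'})$, a splitting into an elliptic (high-frequency) piece estimated in $L^2$ and a near-characteristic (low-frequency) piece estimated through the $L^{\frac{2d}{d+2}}$-norm with the extra factor $\tau$ coming from the derivative at frequency $\lesssim\tau$. This is exactly the role of the projector $P_{hf,\tau}$ and of Propositions \ref{Prop-OpNorm-Est-K0-Improved}, \ref{Prop-OpNorm-Est-K1-Improved} and \ref{Prop-OpNorm-Est-K1-Improved-LF} in the paper (with the caveat that the normal component $(I-P_{hf,\tau})F_{2*'}\cdot e_1$ cannot be absorbed by a Bernstein inequality in $x'$ and requires the dedicated low-frequency bounds on $(I-P_{hf,\tau})K_{\tau,1}$, including its $\partial_1$-derivative --- you flag this as the delicate point, correctly).

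There is, however, a genuine gap in your treatment of the term $\|\nabla w\|_{L^2(E)}$ in \eqref{Carleman-Strip-w-2-Improved}. You propose H\"older's inequality $\|\nabla w\|_{L^2(E)}\le|E|^{\frac{1}{d}}\|\nabla w\|_{L^{\frac{2d}{d-2}}(\Omega)}$ followed by an ``elliptic regularity'' bound of the form $\|\nabla w\|_{L^{\frac{2d}{d-2}}}\lesssim\tau\|w\|_{L^{\frac{2d}{d-2}}}+\text{(sources)}$. No such $L^{\frac{2d}{d-2}}$ estimate for $\nabla w$ is available, and the inequality $\|\nabla w\|\lesssim\tau\|w\|$ is simply false for the high-frequency part of $w$, which lives at frequencies $|\xi'|\geq 2\tau$; making the source terms carry the weight instead would require $L^{\frac{2d}{d-2}}$ bounds on $\nabla K_{\tau,j}$ and in particular on $\partial_1 K_{\tau,j}$ near the characteristic set, which are nontrivial and nowhere established (the available $\partial_1$ bounds are purely Hilbertian). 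The paper circumvents this entirely by splitting $w=w_{lf,\tau}+w_{hf,\tau}$: H\"older plus Bernstein applies to the \emph{tangential} gradient of the \emph{low-frequency} part only ($\|\nabla'w_{lf,\tau}\|_{L^{\frac{2d}{d-2}}}\lesssim\tau\|w_{lf,\tau}\|_{L^{\frac{2d}{d-2}}}$ is legitimate there), while $\nabla w_{hf,\tau}$ and $\partial_1 w$ are bounded by the trivial inclusion $\|\cdot\|_{L^2(E)}\le\|\cdot\|_{L^2(\Omega)}$ combined with the improved high-frequency $L^2$ estimates (which gain the factor $\tau^{\frac{1}{2}}$ needed to compensate for not using the smallness of $E$); the prefactor $\min\{(\tau|E|^{\frac1d})^{-1},1\}\le 1$ then closes the argument. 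Your H\"older argument is fine for the term $\tau\|w\|_{L^2(E)}$, but for the gradient term you need either the frequency decomposition of $w$ itself or new $L^{\frac{2d}{d-2}}$ gradient bounds that would have to be proved.
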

	The proof of Theorem \ref{Thm-Carleman-Strip-2-improved} is of course very close to the one of \cite[Theorem 2.2.]{Dehman-Ervedoza-Thabouti-2023}, as Theorem \ref{Thm-Carleman-Strip-2-improved} is an improved version of Theorem 2.2 in \cite{Dehman-Ervedoza-Thabouti-2023} with respect to the same points as in Remark \ref{remark-Improved-Compared-to-BET}.
	
	It will require several steps: 
	\begin{enumerate}[topsep=0pt,itemsep=0pt,parsep=0pt]
		\item Construction of an explicit parametrix giving $w$ solving \eqref{Elliptic-Strip-w} in terms of the source terms $(f_2, f_{2*'}, F = F_2 + F_{2*'})$, based on \cite[Proposition 3.1]{Dehman-Ervedoza-Thabouti-2023}, that we will recall hereafter in Subsubsection \ref{Subsubsec-Parametrix}.
		\item Estimates on the operators involved in the parametrix, which will be also mainly corresponding to the ones in Section 6 of \cite{Dehman-Ervedoza-Thabouti-2023}, which we also recall. New estimates will also be needed to get the complete estimates of Theorem \ref{Thm-Carleman-Strip-2-improved}, relying on the decomposition of $F_{2*'}$ in its low and high frequency components, see Subsubsection \ref{Subsubsec-Operators-K}.
		\item A suitable combination of the estimates on the various operators involved in the parametrix, see Subsubsection \ref{Subsubsec-Proof-Thm-Strip}.
	\end{enumerate}
	
	In particular, we will rely upon the following explicit parametrix giving $w$ solution of \eqref{Elliptic-Strip-w} in terms of the source terms $(f_2, f_{2*'}, F = F_2 + F_{2*'})$.
	%
	%
	\subsubsection{An explicit parametrix}\label{Subsubsec-Parametrix}
	
	We use the parametrix constructed in the work \cite{Dehman-Ervedoza-Thabouti-2023}. 
	To do so, we introduce the function $\psi : \overline\Omega \to \R$ defined as follows:
	\begin{equation}
	\label{Def-Psi}
	\psi(x_1, \xi')=\sqrt{ \sum_{j=2}^{d}(1-x_{1}  \lambda_{j}) \xi_{j} ^2},
	\qquad x_1 \in [X_0,X_1], \  \xi'\in \R^{d-1}.
	\end{equation}  
	
	According to \cite[Proposition 3.1]{Dehman-Ervedoza-Thabouti-2023}, we then have the following explicit parametrix:

	\begin{proposition}[{\cite[Proposition 3.1]{Dehman-Ervedoza-Thabouti-2023}}]\label{Prop-parametrix-Strip-1}
		Under the same setting of Theorem \ref{Thm-Carleman-Strip-2-improved}. For all $\tau \geq 1$, if $w$ is compactly supported and satisfies \eqref{Elliptic-Strip-w} with source terms $(f_2, f_{2*'},F_2, F_{2*'})$ as in \eqref{Reg-Source-Terms-Strip-Improved}, then 
		\begin{equation} \label{parametrix1}
		w = K_{\tau, 0} (f_2 + f_{2*'} ) 
		+ \sum_{j = 1}^d K_{\tau,j} ((F_2 + F_{2*'}) \cdot e_j)+R_\tau(w),
		\end{equation}
		where the family of vectors $(e_j)_{j \in \{1, \cdots, d\}}$ is the canonical basis of $\R^d$ and, using the partial Fourier transform, the operators $K_{\tau,j}$, for $j \in \{0,\cdots, d\}$, and $R_\tau$ are defined for $f$ depending on $(x_1, x') \in \Omega$ by 
		\begin{align}
		\widehat {K_{\tau,j} f}(x_1, \xi') & =  \int_{y_1 \in (X_0,X_1)} k_{\tau,j}(x_1,y_1,  \xi') \widehat f(y_1, \xi') \, dy_1, 
		\qquad (x_1, \xi') \in \Omega,
		\label{Def-K-tau-j-Fourier}
		\\
		\widehat {R_{\tau} f}(x_1, \xi') & =  \int_{y_1 \in (X_0,X_1)} r_{\tau}(x_1,y_1,  \xi')  \widehat f(y_1, \xi') \, dy_1, 
		\qquad (x_1, \xi') \in \Omega,
		\label{Def-R-tau-Fourier}
		\end{align}
		with kernels given, for $(x_1,y_1,\xi') \in [X_0,X_1]^2\times \R^{d-1}$, by 
		\begin{align}
		k_{\tau,0}(x_1, y_1, \xi') 
		& = 
		-1_{\psi(x_1,\xi') > \tau} \int_{X_0}^{\min \{ x_1, y_1\}}  e^{-\tau(y_1-x_1)-\int_{\tilde{x}_1}^{x_1} \psi(\tilde{y}_1,\xi') \, d\tilde{y}_1 - \int_{{\tilde{x}_1}}^{y_1} \psi(\tilde{y}_1,\xi') \,d \tilde{y}_1 }  \, d \tilde{x}_1
		\nonumber
		\\
		&\quad +1_{\psi(x_1,\xi') \leq \tau} 1_{y_1>x_1} \int_{x_1}^{y_1}  e^{ -\tau (y_1-x_1)+\int_{x_1}^{\tilde{x}_1} \psi(\tilde{y}_1,\xi') \, d \tilde{y}_1-\int_{{\tilde{x}_1}}^{y_1} \psi(\tilde{y}_1,\xi') \, d y_1} \, d \tilde{x}_1,
		\label{Def-k-tau-0-x1-y1-xi'}
		\\
		k_{\tau,1} (x_1,y_1, \xi')
		& = 
		- 1_{\psi(x_1, \xi') \leq \tau} 1_{y_1>x_1} e^{-\tau(y_1-x_1)+\int_{x_1}^{y_1} \psi(\tilde{y}_1,\xi') \, d \tilde{y}_1}
		\nonumber
		\\ 
		&\quad + 1_{\psi(x_1,\xi') > \tau} 1_{y_1 < x_1} e^{\tau(x_1-y_1) - \int_{y_1}^{x_1} \psi(\tilde{y}_1,\xi') \, d \tilde{y}_1}
		+ k_{\tau,0}(x_1,y_1,\xi') (\tau + \psi(y_1,\xi')),
		\label{Def-k-tau-1-x1-y1-xi'}
		\\
		\label{Def-k-tau-j-x1-y1-xi'}
		k_{\tau,j}(x_1, y_1, \xi') 
		& = \i \xi_j k_{\tau,0}(x_1,y_1, \xi'), \qquad\qquad \qquad  j \in \{2, \cdots, d\},
		\\
		\label{Def-r-tau-x1-y1-xi'}
		r_\tau(x_1,y_1, \xi) 
		& = k_{\tau, 0}(x_1,y_1, \xi') \partial_1 \psi(y_1,\xi').
		\end{align}
	\end{proposition}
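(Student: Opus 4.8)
\textbf{Proof strategy for Proposition \ref{Prop-parametrix-Strip-1}.} Since the statement is attributed verbatim to \cite[Proposition 3.1]{Dehman-Ervedoza-Thabouti-2023}, the natural plan is to re-derive the parametrix by a direct computation after partial Fourier transform in $x'$. The plan is to take the partial Fourier transform of \eqref{Elliptic-Strip-w} in the transverse variable $x'$; using that $\lambda_1 = 0$, the operator $\Delta - x_1 \sum_{j=2}^d \lambda_j \partial_j^2 - 2\tau \partial_1 + \tau^2$ becomes, on the Fourier side, the family (parametrized by $\xi' \in \R^{d-1}$) of one-dimensional ODE operators
\begin{equation*}
L_{\tau,\xi'} := \partial_1^2 - 2\tau \partial_1 + \tau^2 - \psi(x_1,\xi')^2 = (\partial_1 - \tau)^2 - \psi(x_1,\xi')^2,
\end{equation*}
where $\psi$ is the symbol defined in \eqref{Def-Psi}. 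The source $f_2 + f_{2*'} + \div(F_2+F_{2*'})$ transforms into $\widehat{f_2} + \widehat{f_{2*'}} + \partial_1 \widehat{(F\cdot e_1)} + \sum_{j=2}^d \i\xi_j \widehat{(F\cdot e_j)}$, which already explains the structure of \eqref{parametrix1}: the $j=0$ term handles the zeroth-order sources and, via the $\i\xi_j$ factor in \eqref{Def-k-tau-j-x1-y1-xi'}, the transverse components $F\cdot e_j$ for $j\ge 2$, while the $j=1$ term handles $\partial_1(F\cdot e_1)$ after one integration by parts, producing the extra $(\tau + \psi(y_1,\xi'))$ factor in \eqref{Def-k-tau-1-x1-y1-xi'}.

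The core of the argument is then to solve the scalar ODE $L_{\tau,\xi'} \widehat w(\cdot,\xi') = \widehat g(\cdot,\xi')$ on the interval $(X_0,X_1)$ for compactly supported $w$. First I would treat the operator $L_{\tau,\xi'}$ as a perturbation of the constant-coefficient operator $(\partial_1-\tau)^2 - c$ and build a Green-type kernel adapted to the sign of $\psi(x_1,\xi') - \tau$. This dichotomy is exactly the origin of the two indicator functions $1_{\psi(x_1,\xi')>\tau}$ and $1_{\psi(x_1,\xi')\le\tau}$ appearing in \eqref{Def-k-tau-0-x1-y1-xi'}: in the ``elliptic'' regime $\psi > \tau$ the operator $L_{\tau,\xi'}$ is invertible with exponentially decaying resolvent kernel and one integrates from the boundary $X_0$ inward, whereas in the regime $\psi \le \tau$ one is in the ``hyperbolic'' / loss-of-ellipticity zone and must instead propagate forward from $x_1$, which is why the second line of \eqref{Def-k-tau-0-x1-y1-xi'} integrates over $[x_1,y_1]$ with $1_{y_1>x_1}$. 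Concretely I would write $\widehat w = e^{\tau x_1} v$, reducing to $(\partial_1^2 - \psi^2) v = e^{-\tau x_1}\widehat g$, and use the WKB-type ansatz involving $\exp(\pm \int \psi\, d\tilde y_1)$, matching against the compact support condition so that $R_\tau(w)$ collects precisely the boundary/remainder contributions that are not absorbed by the explicit kernels; the identity \eqref{Def-r-tau-x1-y1-xi'} with $\partial_1\psi$ then comes from differentiating the WKB phases, i.e. from the fact that $\psi$ depends on $x_1$.

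The remaining step is bookkeeping: one verifies that, with $R_\tau$ defined by \eqref{Def-R-tau-Fourier}--\eqref{Def-r-tau-x1-y1-xi'}, the function $w$ given by the right-hand side of \eqref{parametrix1} indeed satisfies \eqref{Elliptic-Strip-w}, i.e. that applying $L_{\tau,\xi'}$ to each $\widehat{K_{\tau,j} f}$ reproduces the correct source term up to the contribution $\widehat{R_\tau(w)}$, and that $\widehat{R_\tau(w)}$ is genuinely lower order (it involves $\partial_1\psi$, a bounded multiplier, rather than $\psi$ itself). The compact support hypothesis on $w$ is used to discard boundary terms at $X_0$ and $X_1$ when integrating by parts. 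I expect the main obstacle to be purely organizational rather than conceptual: carefully tracking which of the four regimes (sign of $\psi-\tau$, sign of $y_1-x_1$) a given term belongs to, and checking that the kernels are consistently defined across the switching surface $\psi(x_1,\xi')=\tau$ so that $w$ is a genuine (distributional) solution and not merely a formal one. Since the full computation is carried out in \cite{Dehman-Ervedoza-Thabouti-2023}, I would simply cite it, as the authors do, and devote the real work to the new estimates on the operators $K_{\tau,j}$ and $R_\tau$ in the subsequent subsubsections.
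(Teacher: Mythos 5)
Your proposal is correct and takes essentially the same route as the paper: the proposition is quoted verbatim from \cite[Proposition 3.1]{Dehman-Ervedoza-Thabouti-2023}, and the only substantive remark the paper adds is the one you reconstruct, namely that after partial Fourier transform the conjugated operator factors as $(\partial_1-\tau-\psi)(\partial_1-\tau+\psi)-\partial_1\psi$, so that the two first-order factors are inverted by the exponentials $e^{\pm(\tau x_1\mp\int\psi)}$ (integrated in the direction dictated by the sign of $\psi-\tau$, which produces the indicator functions in $k_{\tau,0}$) while the commutator term $\partial_1\psi$ generates the remainder $R_\tau$. Your sketch of the integration by parts producing the $(\tau+\psi(y_1,\xi'))$ factor in $k_{\tau,1}$ and of the bookkeeping across the surface $\psi(x_1,\xi')=\tau$ is consistent with the derivation in the cited reference, and deferring the full computation to that reference is exactly what the paper does.
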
  
	
	The key to prove Proposition \ref{Prop-parametrix-Strip-1} is to remark that the partial Fourier transform of the operator in \eqref{Conjugated-Operator-strip} is of the form 
	$$
	(\partial_{1} - \tau)^2 - \sum_{j = 2}^d |\xi_j|^2 (1 - x_1 \lambda_j)  = (\partial_1 - \tau - \psi(x_1, \xi')) (\partial_1 - \tau + \psi(x_1,\xi') ) - \partial_{1} \psi(x_1, \xi').  
	$$
	
	It is then clear that the set in which the estimates may degenerate is the set $\{(x_1, \xi') \in (X_0, X_1) \times \R^{d-1}, \text{ with } \psi(x_1, \xi') = \tau \}$.  Accordingly, it is interesting to use a kind of projection operator $P_{hf, \tau}$ on the high-frequency components acting on $L^2(\Omega)$ and given in  Fourier, for $f \in L^2(\Omega)$, by the formula
	\begin{equation}\label{Def-P-hf}
	\widehat {P_{hf, \tau} f }(x_1, \xi' ) = \eta\left( \frac{\psi(X_1, \xi')}{\tau} \right) \widehat f(x_1, \xi'), \qquad \qquad (x_1, \xi') \in (X_0, X_1) \times \R^{d-1}, 
	\end{equation} 
	where $\eta $ is a smooth function in $\mathscr{C}^\infty ([0, \infty), \R)$, taking value $0$ in $[0,2]$, taking value $1$ outside $[0, 3]$, and bounded by $1$.
	
	Note that since the operator $P_{hf, \tau}$ corresponds to a convolution in the $x'$ variable with a $L^1$ kernel $x' \mapsto \tau^{d-1} \eta_1 (\tau x')$ where $ \eta_1(z')=\frac{1}{(2 \pi ) ^{(d-1)/2}}\int_{\R^{d-1}} e^{\i z'\cdot\xi'}\eta(\psi(X_1,\xi')) \,  d \xi'$, one can check  through a simple scaling argument that, for all $p \in [1, \infty]$, there exists $C_p$ such that for all $f \in L^p (\Omega)$, 
	$$
	\| P_{hf, \tau} f \|_{L^p(\Omega)} \leq C_p   \| f \|_{L^p(\Omega)}.
	$$

	This operator presents the advantage of localizing in the frequencies $\xi'$ such that for all $x_1 \in (X_0, X_1)$, $\psi(x_1, \xi') \geq \psi(X_1, \xi') \geq 2 \tau$, and thus far away from the critical set $\{(x_1, \xi') \in (X_0, X_1) \times \R^{d-1}, \text{ with } \psi(x_1, \xi') = \tau \}$. It is also easy to check that it commutes with all the operators $(K_{\tau,j})_{j \in \{0, \cdots, d\}}$ and $R_\tau$.

	This operator can be used in particular on $F_{2*'}$, that we will write as 
	$$
	F_{2*'} = P_{hf, \tau} F_{2*'} + (I - P_{hf, \tau})F_{2*'}.
	$$
	Using the notations $F_{2*'}'$ to denote the last $d-1$ components of $F_{2*'}$ and $\div'$ to denote the divergence operator on $\R^{d-1}_{x'}$, it is easy to check that $ \div'((I- P_{hf,\tau}) F_{2*'}')$ belongs to $L^2(\Omega)$, and thus the identity \eqref{parametrix1} can be written as 
	\begin{multline} \label{parametrix2}
	w = K_{\tau, 0} (f_2 + f_{2*'} + \div'((I- P_{hf,\tau}) F_{2*'}') ) + K_{\tau,1}((F_2 + P_{hf, \tau }F_{2*'} )\cdot e_1) 
	\\
	+
	K_{\tau,1}((( I - P_{hf,\tau}) F_{2*'} )\cdot e_1)
	+ \sum_{j = 2}^d K_{\tau,j} ((F_2 + P_{hf, \tau} F_{2*'}) \cdot e_j)+R_\tau(w).
	\end{multline}
	This is one of the formula that we will use next. Note that it involves all the operators $K_{\tau, j}$ and $R_\tau$ appearing in Proposition \ref{Prop-parametrix-Strip-1}, so that Theorem \ref{Thm-Carleman-Strip-2-improved} will be derived using the known estimates on these operators obtained in \cite{Dehman-Ervedoza-Thabouti-2023} and the suitable gains that we will have by considering how these operators act at low and high-frequencies. 
	
	%
	%
	
	\subsubsection{Boundedness of the operators $K_{\tau,j}$}\label{Subsubsec-Operators-K}
	
	Our proof will be based on estimates for each of the operators $(K_{\tau,j})_{j \in \{0,\cdots, d\}}$. 
	%
	%
	
	\paragraph{Known estimates.} Several estimates have already been obtained in \cite{Dehman-Ervedoza-Thabouti-2023} and are recalled here: 
	
	\begin{proposition}[Proposition 6.2 in \cite{Dehman-Ervedoza-Thabouti-2023}]
		\label{Prop-OpNorm-Est-K0}
		Under the setting of Theorem \ref{Thm-Carleman-Strip-2-improved}.
		There exist $C>0$ and $\tau_0 \geq 1$ independent of $X_0, X_1$ (and depending only on $c_0$, $m_*$ and $M_*$ in \eqref{Coercivity} and \eqref{Pseudo-Convexity-Strip}) such that for all $\tau \geq \tau_0$, 
		for all $f \in L^{\frac{2d}{d+2}}(\Omega)$, 
		\begin{multline}	
		\label{Est-K-tau-0-f-L-2*'}
		\| K_{\tau,0} f\|_{L^{\frac{2d}{d-2}}(\Omega)} 
		+
		\tau^{\frac{3}{4}+\frac{1}{2d}} \| K_{\tau,0} f\|_{L^{2}(\Omega)} 
		+
		\| \partial_1 \widehat{ K_{\tau,0} f}\|_{L^2(\Omega_{1, \tau})}
		+
		\tau^{-\frac{1}{4} +\frac{1}{2d}} \| \nabla' K_{\tau,0} f\|_{L^2(\Omega)} 
		\leq 
		C \| f \|_{L^{\frac{2d}{d+2}}(\Omega)}, 
		\end{multline}
		and, for all $f \in L^2(\Omega)$,
		\begin{multline}	
		\label{Est-K-tau-0-f-L-2}
		\tau^{\frac{3}{4}+\frac{1}{2d}} \| K_{\tau,0} f\|_{L^{\frac{2d}{d-2}}(\Omega)} 
		+
		\tau^{\frac{3}{2}} \| K_{\tau,0} f\|_{L^{2}(\Omega)} 
		+
		\tau
		\| \partial_1 \widehat{ K_{\tau,0} f}\|_{L^2(\Omega_{1, \tau})}
		+
		\tau^{\frac{1}{2}} \| \nabla' K_{\tau,0} f\|_{L^2(\Omega)} 
		\leq 
		C \| f \|_{L^2(\Omega)}, 
		\end{multline}
		with 
		\begin{equation}
		\label{Def-Omega-1-tau}
		\Omega_{1, \tau} = \{(x_1, \xi') \in (X_0, X_1) \times \R^{d-1}, \text{ with } \psi(x_1, \xi') \neq \tau \}.
		\end{equation}
	\end{proposition}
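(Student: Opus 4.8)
\emph{Strategy.} The statement is read off the explicit formula for $K_{\tau,0}$ in Proposition \ref{Prop-parametrix-Strip-1}. After the partial Fourier transform in $x'$, $K_{\tau,0}$ acts, for each fixed $\xi'\in\R^{d-1}$, as the one-dimensional integral operator on $(X_0,X_1)$ with kernel $k_{\tau,0}(\cdot,\cdot,\xi')$ given by \eqref{Def-k-tau-0-x1-y1-xi'}. As recalled just after Proposition \ref{Prop-parametrix-Strip-1}, this kernel originates from the factorisation $(\partial_1-\tau)^2-\psi(x_1,\xi')^2=(\partial_1-\tau-\psi)(\partial_1-\tau+\psi)-\partial_1\psi$, so that $\widehat{K_{\tau,0}f}(\cdot,\xi')$ is, modulo the lower-order correction already built into the parametrix, a composition of two first-order solution operators, one uniformly elliptic and one of turning-point type, according to the sign of $\psi(x_1,\xi')-\tau$. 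The only obstruction to good bounds is therefore the characteristic set $\{\psi(x_1,\xi')=\tau\}$, whose complement is $\Omega_{1,\tau}$ of \eqref{Def-Omega-1-tau}. I would split the frequencies $\xi'$, using $P_{hf,\tau}$ of \eqref{Def-P-hf}, into (i) a genuinely elliptic part, $\{\psi(X_1,\xi')\ge 2\tau\}$, on which $\psi(x_1,\xi')\ge 2\tau$ for all $x_1$; and (ii) the complement, on which $\psi(x_1,\xi')\asymp|\xi'|\lesssim\tau$ uniformly over the strip by \eqref{Coercivity}, further split into a part with $|\psi(x_1,\xi')-\tau|$ bounded below and the critical part where it is small for some $x_1\in(X_0,X_1)$.

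\emph{The $L^2$ bounds \eqref{Est-K-tau-0-f-L-2}.} On region (i) and on the non-critical part of (ii), the exponential weights in \eqref{Def-k-tau-0-x1-y1-xi'} decay in $|x_1-y_1|$ at rate $\gtrsim|\psi(x_1,\xi')-\tau|\gtrsim\tau$ (and $\gtrsim\psi$ on (i)); so $\int|k_{\tau,0}(x_1,y_1,\xi')|\,dy_1$ and $\int|k_{\tau,0}(x_1,y_1,\xi')|\,dx_1$ are $\lesssim(\psi^2+\tau^2)^{-1}$, and Schur's test bounds the one-dimensional operator norm by $\lesssim\tau^{-2}$ (after dyadically summing the $\psi^{-2}$ contributions on (i)). On the critical part, the convexity hypothesis \eqref{Pseudo-Convexity-Strip} forces $\partial_1\psi(x_1,\xi')=-\frac{1}{2\psi}\sum_{j=2}^d\lambda_j\xi_j^2\asymp-\psi\asymp-\tau$, so every characteristic frequency has a \emph{simple} turning point $x_1^\ast$; the turning-point first-order factor then has an explicit Gaussian kernel of $L^2(dx_1)$-operator norm $\lesssim\tau^{-1/2}$ (seen after the rescaling $x_1-x_1^\ast\sim\tau^{-1/2}s$), which composed with the uniform $\lesssim\tau^{-1}$ bound on the complementary elliptic factor gives an $L^2_{x_1}$-bound $\lesssim\tau^{-3/2}$ --- the familiar half-derivative loss of $L^2$ Carleman estimates. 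Plancherel in $x'$ then yields $\tau^{3/2}\|K_{\tau,0}f\|_{L^2(\Omega)}\lesssim\|f\|_{L^2(\Omega)}$; $\nabla'$ multiplies by $|\xi'|\asymp\psi\lesssim\tau$ on (ii) and does better on (i), giving the weight $\tau^{1/2}$ on $\|\nabla'K_{\tau,0}f\|_{L^2(\Omega)}$; and $\partial_1$ applied to $\widehat{K_{\tau,0}f}$ brings down $\tau\pm\psi(x_1,\xi')=O(\tau)$ on $\Omega_{1,\tau}$, giving the weight $\tau$ there.

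\emph{The $L^p$-improving and mixed bounds \eqref{Est-K-tau-0-f-L-2*'} --- the main point.} The scale-invariant bound $\|K_{\tau,0}f\|_{L^{2d/(d-2)}(\Omega)}\lesssim\|f\|_{L^{2d/(d+2)}(\Omega)}$ is the crux: it cannot be obtained from $L^2$ theory, nor interpolated from the previous paragraph. I would localise to the critical region and rescale $\xi'=\tau\zeta'$ together with the matching rescaling of $x_1$, which turns $K_{\tau,0}$ there into (morally) the resolvent, at energy $1$, of the second-order elliptic operator $-\sum_{j=2}^d(1-x_1\lambda_j)\partial_j^2$ on $\R^{d-1}$, integrated over the bounded $x_1$-range. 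By \eqref{Coercivity}--\eqref{Pseudo-Convexity-Strip} its characteristic variety is a smooth hypersurface with non-vanishing Gaussian curvature, so a Fourier restriction estimate of Stein--Tomas type (equivalently, a uniform Sobolev inequality of Kenig--Ruiz--Sogge type, in its anisotropic form for the present metric) produces this bound with an absolute constant. The mixed estimates --- $\|K_{\tau,0}f\|_{L^2}\lesssim\tau^{-(3/4+1/(2d))}\|f\|_{L^{2d/(d+2)}}$ and, dually, $\|K_{\tau,0}f\|_{L^{2d/(d-2)}}\lesssim\tau^{-(3/4+1/(2d))}\|f\|_{L^2}$ (the second appearing in \eqref{Est-K-tau-0-f-L-2}) --- then follow by combining this restriction bound with the one-dimensional dispersive gain in $x_1$ from the turning-point analysis; the exponent $\frac34+\frac1{2d}$ is precisely what balancing the $(d-1)$-dimensional restriction gain against the $x_1$-integration forces. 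The remaining terms of \eqref{Est-K-tau-0-f-L-2*'}, and the reassembling of the frequency pieces, are handled as before, using on regions (i)--(ii) that the operator is elliptic there and that $P_{hf,\tau}$ is bounded on every $L^p(\Omega)$.

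\emph{Main obstacle.} The delicate point is the $L^p$-improving bound of the third paragraph: extracting the sharp fractional powers $\frac34\pm\frac1{2d}$ of $\tau$ from the interaction between the $(d-1)$-dimensional Fourier restriction phenomenon at the characteristic frequencies and the one-dimensional degeneration of the symbol at its simple turning point, with constants independent of $X_0$ and $X_1$. Everything else --- the elliptic regions, the gluing, the derivative estimates --- is a careful but routine analysis of the explicit kernel \eqref{Def-k-tau-0-x1-y1-xi'}.
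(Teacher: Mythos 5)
This proposition is not proved in the present paper: it is quoted verbatim as Proposition~6.2 of \cite{Dehman-Ervedoza-Thabouti-2023} and used as a black box, so there is no in-paper argument to compare against line by line. Your sketch is, however, sound in outline and in fact reconstructs the method of the cited companion paper, whose machinery this paper recalls (Proposition~\ref{Prop-Est-Fourier-Op} in the appendix, and the kernel bounds \eqref{Est-k-tau-0-x1-y1-High-Freq}, \eqref{Est-k-tau-0-x1-y1-Low-Freq} quoted in the proofs of Propositions~\ref{Prop-OpNorm-Est-K0-Improved} and \ref{Prop-OpNorm-Est-K1-Improved-LF}). The one genuine difference is in how the restriction input is packaged. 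You rescale $\xi'=\tau\zeta'$ to reduce the critical region to a fixed-energy resolvent and invoke a uniform Sobolev/Kenig--Ruiz--Sogge inequality; the cited proof instead works fiberwise: for each fixed pair $(x_1,y_1)$ it treats $k_{\tau,0}(x_1,y_1,\cdot)$ as a Fourier multiplier in the adapted coordinates $(\lambda,\omega')=(\psi(x_1,\xi'),\xi'/\psi(x_1,\xi'))$, applies the Stein--Tomas-based bounds of Proposition~\ref{Prop-Est-Fourier-Op} (whose exponents $\tfrac{2(n+1)}{n+3},\tfrac{2(n+1)}{n-1}$ with $n=d-1$ are exactly $\tfrac{2d}{d+2},\tfrac{2d}{d-2}$, and whose weight $\lambda^{1-2/d}$ encodes the restriction gain), and then sums in $x_1$ by Young or Hardy--Littlewood--Sobolev. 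The fiberwise route has the advantage of producing the mixed exponents $\tau^{-(3/4\pm 1/(2d))}$ mechanically, as explicit $\lambda$-integrals of the kernel bounds (your ``balancing'' step), and of keeping the constants manifestly independent of $X_0,X_1$; your rescaling route is conceptually cleaner about why curvature of the characteristic ellipsoid is the relevant hypothesis, but it leaves the turning-point/restriction interaction --- precisely the step you flag as the main obstacle --- to be done by hand, which is where all the work in \cite[Lemmas 6.4 and 6.6]{Dehman-Ervedoza-Thabouti-2023} actually lies. Your $L^2$ paragraph (Schur test off the characteristic set, Gaussian kernel of width $\tau^{-1/2}$ at the simple turning point, total loss $\tau^{-3/2}$) is consistent with the low-frequency kernel bound \eqref{Est-k-tau-0-x1-y1-Low-Freq}, whose $L^1_{y_1}$-norm at $\lambda=\tau$ is indeed of order $\tau^{-3/2}$.
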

	
	\begin{proposition}[Proposition 6.7 and 6.10 in \cite{Dehman-Ervedoza-Thabouti-2023}]
		\label{Prop-OpNorm-Est-K1}
		Under the setting of Theorem \ref{Thm-Carleman-Strip-2-improved}.
		There exist $C>0$ and $\tau_0 \geq 1$ independent of $X_0, X_1$ (and depending only on $c_0$, $m_*$ and $M_*$ in \eqref{Coercivity} and \eqref{Pseudo-Convexity-Strip}) such that for all $j \in \{1, \cdots, d\}$, for all $\tau \geq \tau_0$ and for all $f \in L^2(\Omega)$,
		\begin{equation*}	
		\label{Est-K-tau-j-f-L-2}
		\tau^{-\frac{1}{4}+\frac{1}{2d}} \| K_{\tau,j} f\|_{L^{\frac{2d}{d-2}}(\Omega)} 
		+
		\tau^{\frac{1}{2}} \| K_{\tau,j} f\|_{L^{2}(\Omega)} 
		+
		\| \partial_1 \widehat{ K_{\tau,j} f}\|_{L^2(\Omega_{1, \tau})}
		+
		\tau^{-\frac{1}{2}} \| \nabla' K_{\tau,j} f\|_{L^2(\Omega)}
		\leq 
		C \| f \|_{L^2(\Omega)}. 
		\end{equation*}
	\end{proposition}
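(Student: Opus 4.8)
This is precisely Propositions 6.7 and 6.10 of \cite{Dehman-Ervedoza-Thabouti-2023}; we recall the strategy. The plan is to work one frequency $\xi'$ at a time: by \eqref{Def-K-tau-j-Fourier}, for each fixed $\xi' \in \R^{d-1}$ the operator $K_{\tau,j}$ acts on $\widehat f(\cdot, \xi')$ as a one-dimensional integral operator in $x_1$ with the explicit kernel $k_{\tau,j}(x_1, y_1, \xi')$ from Proposition \ref{Prop-parametrix-Strip-1}. From \eqref{Coercivity}--\eqref{Pseudo-Convexity-Strip}, the map $x_1 \mapsto \psi(x_1, \xi')$ is monotone decreasing on $[X_0, X_1]$ and obeys $c_0^{-1}|\xi'| \leq \psi(x_1, \xi') \leq c_0 |\xi'|$ together with $|\partial_1 \psi(x_1, \xi')| \sim \psi(x_1, \xi') \sim |\xi'|$. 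Accordingly, I would split $\xi'$-space into three regimes: (i) the low regime $\psi(X_0, \xi') \lesssim \tau$; (ii) the high regime $\psi(X_1, \xi') \gtrsim \tau$ --- in both of which the relevant one-dimensional operator is uniformly non-degenerate, the factorisation $(\partial_1 - \tau - \psi)(\partial_1 - \tau + \psi) - \partial_1 \psi$ having no vanishing factor; and (iii) the transition regime, in which $\psi(\cdot, \xi')$ crosses the level $\tau$ at a unique turning point $x_1^\ast = x_1^\ast(\xi') \in (X_0, X_1)$.

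In regimes (i) and (ii), each of the kernels $k_{\tau,j}$, as well as the two purely exponential pieces of $k_{\tau,1}$ (whose exponents are nonpositive thanks to the monotonicity of $\psi$), decays exponentially in $|x_1 - y_1|$ on the scale $(\tau + |\xi'|)^{-1}$. A Schur test (or Young's inequality in the $x_1$ variable) then gives the $L^2_{x_1} \to L^2_{x_1}$ bounds with the claimed powers of $\tau$ and $|\xi'|$; and since the corresponding $\xi'$-multipliers are either supported in $|\xi'| \gtrsim \tau$ or carried by a fixed smooth compactly supported profile, the matching $L^2(\Omega) \to L^{\frac{2d}{d-2}}(\Omega)$ bound follows, after the rescaling $\xi' = \tau \eta'$, from Sobolev embedding.

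The core of the proof is regime (iii). Near the turning point one has $\psi(x_1, \xi') - \tau \approx \partial_1 \psi(x_1^\ast, \xi')(x_1 - x_1^\ast)$ with $|\partial_1 \psi(x_1^\ast, \xi')| \sim \tau$, so the degenerate first-order factor $\partial_1 - \tau + \psi$ has a coefficient vanishing linearly at $x_1^\ast$ with slope of size $\tau$, whence the width of the transition zone is $\sim \tau^{-1/2}$; this scale is exactly what separates the power $\tau^{1/2}$ in the $L^2$ bound from the power $\tau^{-1/4+1/(2d)}$ in the $L^{\frac{2d}{d-2}}$ bound. Here I would (a) estimate the $L^2_{x_1} \to L^2_{x_1}$ norm of the kernel restricted to this zone by Schur's lemma, tracking the $\tau^{-1/2}$ localisation, and (b) note that, after rescaling $\xi' = \tau \eta'$, the transition piece of $K_{\tau,j}$ is, up to harmless bounded factors, a Fourier multiplier concentrated in a $\tau^{-1}$-neighbourhood of the ellipsoid $\{\psi(x_1^\ast(\eta'), \eta') = 1\} \subset \R^{d-1}$, convolved against a profile in $x_1$; the uniform Sobolev / Stein--Tomas-type estimate for such thickened-hypersurface multipliers in $\R^{d-1}$ (in the spirit of Kenig--Ruiz--Sogge) then produces precisely the loss $\tau^{-1/4+1/(2d)}$ relative to $L^2$.

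It then remains to assemble the pieces for each $j$. For $j \in \{2, \cdots, d\}$, $k_{\tau,j} = \i \xi_j k_{\tau,0}$, so the $L^2$-based bounds can be read off from Proposition \ref{Prop-OpNorm-Est-K0} (through $\| \nabla' K_{\tau,0} f \|_{L^2(\Omega)}$), and the $L^{\frac{2d}{d-2}}$ bound is obtained by rerunning the three-regime analysis with the harmless extra factor $\xi_j$, which on the critical frequencies $|\xi'| \sim \tau$ costs a bounded power of $\tau$ and elsewhere is absorbed by the decay of $k_{\tau,0}$. For $j = 1$, the term $k_{\tau,0}(x_1, y_1, \xi')(\tau + \psi(y_1, \xi'))$ is $K_{\tau,0}$ applied to the input $(\tau + \psi(y_1, \xi')) \widehat f$, again covered by Proposition \ref{Prop-OpNorm-Est-K0}, while the two exponential terms are treated exactly as above. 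The bound on $\partial_1 \widehat{K_{\tau,j} f}$ follows by differentiating the kernel in $x_1$, which produces the bounded factor $\partial_1 \psi$ times the kernel plus boundary terms, under control away from the critical set $\psi = \tau$ --- hence the restriction of the norm to $\Omega_{1,\tau}$ in \eqref{Def-Omega-1-tau} --- with no power gain; and $\nabla' K_{\tau,j} f$ is $\i \xi'$ times $K_{\tau,j} f$, which on the dominant (transition) frequencies $|\xi'| \sim \tau$ costs one power of $\tau$, accounting for the $\tau^{-1/2}$ weight. The main difficulty is step (iii)(b): running the Fourier-restriction estimate with the $x_1$-dependent phase $\psi(x_1, \xi')$ and extracting the sharp exponent $1/4 - 1/(2d)$ uniformly in $X_0$, $X_1$ and in the coefficients $(\lambda_j)$ constrained only by \eqref{Coercivity}--\eqref{Pseudo-Convexity-Strip}.
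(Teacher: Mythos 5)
The paper does not prove this proposition: it is imported verbatim from \cite{Dehman-Ervedoza-Thabouti-2023} (Propositions 6.7 and 6.10 there), so the only "proof" in the present paper is the citation, which you correctly give. Your reconstruction of the strategy is substantively the one used in that reference: pointwise bounds on $\|k_{\tau,j}(x_1,y_1,\lambda,\cdot)\|_{L^\infty(\Sigma_{x_1})}$ split at the critical level $\psi=\tau$, a Stein--Tomas restriction input to pass from $L^2$ to $L^{\frac{2d}{d-2}}$, Young/Hardy--Littlewood--Sobolev in the $x_1$ variable, reduction of $j\ge 2$ to $j=0$ through the factor $\i\xi_j$, and the three-term decomposition of $k_{\tau,1}$ for $j=1$. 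The organizational difference is that the reference does not decompose frequency space into three regimes first: it applies the restriction criterion (Proposition \ref{Prop-Est-Fourier-Op} here, Proposition 5.3 there) for each fixed pair $(x_1,y_1)$, measuring the multiplier by the weighted norm $\bigl(\int_0^\infty\|k(\lambda,\cdot)\|_{L^\infty(\Sigma_{x_1})}^2\,\lambda^{1-\frac{2}{d}}\,d\lambda\bigr)^{1/2}$ --- the turning-point degeneracy then shows up only through the size of this integral near $\lambda=\tau$ (cf.\ \eqref{est-k1-x1-y1-Lp-L2}, where the transition contribution carries exactly the $\tau^{\frac14-\frac{1}{2d}}$ factor) --- and only afterwards integrates in $y_1$. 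Your regime-by-regime version buys a cleaner heuristic for where the exponent $\frac14-\frac1{2d}$ comes from, at the cost of having to justify separately the non-critical regimes; there your appeal to Sobolev/Bernstein is indeed admissible only because those regimes are not sharp (near $\psi=\tau$ one genuinely needs the restriction theorem, which you do invoke). One small imprecision: for $j\ge 2$ the bound on $\tau^{-\frac12}\|\nabla' K_{\tau,j}f\|_{L^2}$ involves $|\xi'|^2 k_{\tau,0}$ and so is not literally "read off" from Proposition \ref{Prop-OpNorm-Est-K0}; it requires rerunning the kernel estimates with an extra factor of $\lambda$, as you in effect do in your closing sentence. This does not affect the correctness of the approach.
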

	
	\begin{proposition}[Proposition 6.13 in \cite{Dehman-Ervedoza-Thabouti-2023}]
		\label{Prop-OpNorm-Est-R-tau}
		Under the setting of Theorem \ref{Thm-Carleman-Strip-2-improved}. 
		There exist $C>0$ and $\tau_0 \geq 1$ independent of $X_0, X_1$ (and depending only on $c_0$, $m_*$ and $M_*$ in \eqref{Coercivity} and \eqref{Pseudo-Convexity-Strip}) such that for all $\tau \geq \tau_0$ and for all $f \in H^1(\Omega)$, 
		\begin{equation*}	
		\tau^{\frac{3}{4}+\frac{1}{2d}} \| R_{\tau} f\|_{L^{\frac{2d}{d-2}}(\Omega)} 
		+
		\tau^{\frac{3}{2}} \| R_{\tau} f\|_{L^{2}(\Omega)} 
		+
		\tau 
		\| \partial_1 \widehat{ R_{\tau} f}\|_{L^2(\Omega_{1, \tau})}
		+
		\tau^{\frac{1}{2}} \| \nabla' R_{\tau,0} f\|_{L^2(\Omega)}
		\leq 
		C \| \nabla' f \|_{L^2(\Omega)}. 
		\end{equation*}
	\end{proposition}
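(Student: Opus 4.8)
The plan is to realize $R_\tau$ as the composition of a bounded first-order tangential Fourier multiplier with the operator $K_{\tau,0}$, whose $L^2(\Omega)$-mapping properties are already recorded in Proposition~\ref{Prop-OpNorm-Est-K0}; this reduces the whole statement to an elementary observation on the symbol $\partial_1\psi$.

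First I would invoke the defining identity \eqref{Def-r-tau-x1-y1-xi'} for the kernel of $R_\tau$, namely $r_\tau(x_1,y_1,\xi') = k_{\tau,0}(x_1,y_1,\xi')\,\partial_1\psi(y_1,\xi')$. Given $f\in H^1(\Omega)$, introduce $g$ through its partial Fourier transform by $\widehat g(y_1,\xi') := \partial_1\psi(y_1,\xi')\,\widehat f(y_1,\xi')$; comparing \eqref{Def-R-tau-Fourier} with the case $j=0$ of \eqref{Def-K-tau-j-Fourier} then yields the exact identity $R_\tau f = K_{\tau,0}g$. The second step is the bound $\|g\|_{L^2(\Omega)}\le C\,\|\nabla' f\|_{L^2(\Omega)}$ with $C=C(c_0,M_*)$: differentiating \eqref{Def-Psi} in $x_1$ gives $\partial_1\psi(y_1,\xi') = -\tfrac{1}{2\psi(y_1,\xi')}\sum_{j=2}^{d}\lambda_j\xi_j^2$, and combining $0<\sum_{j=2}^{d}\lambda_j\xi_j^2\le M_*|\xi'|^2$ from \eqref{Pseudo-Convexity-Strip} with $\psi(y_1,\xi')\ge c_0^{-1}|\xi'|$ from \eqref{Coercivity} (taking $\xi_1=0$ there) gives $|\partial_1\psi(y_1,\xi')|\le \tfrac{M_* c_0}{2}\,|\xi'|$, uniformly in $y_1\in[X_0,X_1]$. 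Hence, for a.e.\ $y_1$, Plancherel in $x'$ gives $\|g(y_1,\cdot)\|_{L^2(\R^{d-1})}\le \tfrac{M_* c_0}{2}\,\|\, |\xi'|\,\widehat f(y_1,\cdot)\,\|_{L^2(\R^{d-1})} = \tfrac{M_* c_0}{2}\,\|\nabla' f(y_1,\cdot)\|_{L^2(\R^{d-1})}$, and integration over $y_1\in(X_0,X_1)$ yields the claim, with a constant uniform in $X_0,X_1$. In particular $g\in L^2(\Omega)$, and this is the only place where the hypothesis $f\in H^1(\Omega)$ (rather than merely $f\in L^2$) is used.

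Finally I would plug $g$ into the $L^2$ estimate \eqref{Est-K-tau-0-f-L-2} of Proposition~\ref{Prop-OpNorm-Est-K0}: since $R_\tau f = K_{\tau,0}g$, and in particular $\partial_1\widehat{R_\tau f}=\partial_1\widehat{K_{\tau,0}g}$ on $\Omega_{1,\tau}$, we obtain for every $\tau\ge\tau_0$ that $\tau^{\frac34+\frac1{2d}}\|R_\tau f\|_{L^{\frac{2d}{d-2}}(\Omega)}+\tau^{\frac32}\|R_\tau f\|_{L^2(\Omega)}+\tau\,\|\partial_1\widehat{R_\tau f}\|_{L^2(\Omega_{1,\tau})}+\tau^{\frac12}\|\nabla' R_\tau f\|_{L^2(\Omega)}\le C\,\|g\|_{L^2(\Omega)}\le C\,\|\nabla' f\|_{L^2(\Omega)}$, which is the asserted estimate.

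There is essentially no difficulty beyond Proposition~\ref{Prop-OpNorm-Est-K0}: the loss of ellipticity across $\{\psi=\tau\}$ and the Fourier-restriction bounds are already encoded there. What remains requires only mild care: checking that $\partial_1\psi$ is a genuine order-one symbol in $\xi'$, with constant depending solely on $c_0$ and $M_*$, so that $R_\tau$ ``costs exactly one tangential derivative'' --- which is precisely why $\|\nabla' f\|_{L^2}$ appears on the right-hand side --- and keeping all constants uniform in the strip width $X_1-X_0$, which is automatic here since $\partial_1\psi$ sees $X_0,X_1$ only through the coefficients $\lambda_j$.
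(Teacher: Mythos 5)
Your proof is correct: the factorization $R_\tau f = K_{\tau,0}g$ with $\widehat g = \partial_1\psi\,\widehat f$, the symbol bound $|\partial_1\psi(y_1,\xi')|\le \tfrac{M_*c_0}{2}|\xi'|$ from \eqref{Coercivity}--\eqref{Pseudo-Convexity-Strip}, and the application of \eqref{Est-K-tau-0-f-L-2} give exactly the stated estimate with the right powers of $\tau$. This is the same route the kernel identity \eqref{Def-r-tau-x1-y1-xi'} is designed for, i.e.\ the argument of the cited Proposition 6.13 in \cite{Dehman-Ervedoza-Thabouti-2023}.
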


	It is natural to obtain better estimates for the operators  $(P_{hf, \tau} K_{\tau, j})_{ j \in \{0, \cdots, d\}}$ than for the operators $(K_{\tau, j})_{j \in \{0, \cdots, d\}}$, since the high-frequency projection operator $P_{hf, \tau}$ is a projection which projects on the part in which the conjugated operator in \eqref{Conjugated-Operator-strip} is elliptic. Although such estimates are known and rather classical in the Hilbertian setting, this needs to be made precise when trying to get estimates on these operators from $L^p(\Omega)$ to $L^q(\Omega)$ when $p$ or $q$ is different from $2$ (we refer to \cite{koch2001carleman} for estimates of that kind in a closely related context). This is precisely our next goal. 
	%
	%
	
	\paragraph{High-frequency estimates.}
	
	We list below the new estimates we obtain on the operators $(K_{\tau,j})_{j \in \{0, \cdots, d\} }$ at high frequencies, to be compared with the ones in Propositions \ref{Prop-OpNorm-Est-K0} and \ref{Prop-OpNorm-Est-K1}. These will be proved next. 
	
	\begin{proposition}
		\label{Prop-OpNorm-Est-K0-Improved}
		Under the setting of Theorem \ref{Thm-Carleman-Strip-2-improved}.
		There exist $C>0$ and $\tau_0 \geq 1$ independent of $X_0, X_1$ (and depending only on $c_0$, $m_*$ and $M_*$ in \eqref{Coercivity} and \eqref{Pseudo-Convexity-Strip}) such that for all $\tau \geq \tau_0$, 
		for all $f \in L^{\frac{2d}{d+2}}(\Omega)$, 
		\begin{equation}	
		\label{Est-K-tau-0-f-L-2*'-HF}
		\tau \| P_{hf, \tau}K_{\tau,0} f\|_{L^{2}(\Omega)} 
		+
		\| \nabla P_{hf, \tau} K_{\tau,0} f\|_{L^2(\Omega)}
		\leq 
		C \| f \|_{L^{\frac{2d}{d+2}}(\Omega)}, 
		\end{equation}
		and, for all $f \in L^2(\Omega)$,
		\begin{equation}	
		\label{Est-K-tau-0-f-L-2-HF}
		%
		\tau \| P_{hf, \tau}K_{\tau,0} f\|_{L^{\frac{2d}{d-2}}(\Omega)} 
		+
		\tau^{2} \|P_{hf, \tau} K_{\tau,0} f\|_{L^{2}(\Omega)} 
		+
		\tau \| \nabla P_{hf, \tau}K_{\tau,0} f\|_{L^2(\Omega)}
		\leq 
		C \| f \|_{L^2(\Omega)}. 
		\end{equation}
	\end{proposition}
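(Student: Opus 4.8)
\emph{Proof strategy.} The decisive point is that on the frequency support of the cut-off $P_{hf,\tau}$ introduced in \eqref{Def-P-hf}, the conjugated operator \eqref{Conjugated-Operator-strip} is uniformly elliptic, so that the surviving part of the parametrix kernel is exponentially localized with an amplitude gaining powers of the frequency. Indeed, since $\lambda_j>0$ for $j\in\{2,\dots,d\}$ by \eqref{Pseudo-Convexity-Strip} and $1-x_1\lambda_j\geq c_0^{-2}>0$ on $[X_0,X_1]$ by \eqref{Coercivity}, the function $x_1\mapsto\psi(x_1,\xi')$ from \eqref{Def-Psi} is nonincreasing; hence on $\operatorname{supp}\eta(\psi(X_1,\cdot)/\tau)$ we have $\psi(x_1,\xi')\geq\psi(X_1,\xi')\geq 2\tau$ for \emph{every} $x_1\in[X_0,X_1]$, and \eqref{Coercivity} moreover gives $\psi(x_1,\xi')\asymp|\xi'|$ uniformly, so all the quantities $\psi(\widetilde y_1,\xi')$ are comparable; write $\psi=\psi(\xi')$ for any of them, keeping in mind $\psi\gtrsim\tau$ and $|\xi'|\lesssim\psi$. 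On that set only the first branch of $k_{\tau,0}$ in \eqref{Def-k-tau-0-x1-y1-xi'} (the one carrying $1_{\psi(x_1,\xi')>\tau}$) survives, and $1_{\psi(x_1,\xi')>\tau}$ is locally constant in $x_1$ there.

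\emph{Kernel bounds.} Replacing $\psi(\widetilde y_1,\xi')$ by $\psi$ in the exponent of the surviving term and computing the resulting elementary one-dimensional integral, one obtains for some $c=c(c_0,m_*,M_*)>0$
\[
|k_{\tau,0}(x_1,y_1,\xi')|\;\lesssim\;\frac1\psi\,e^{-c\psi|x_1-y_1|}\qquad\text{on }\operatorname{supp}P_{hf,\tau},
\]
and, differentiating — the derivative of $1_{\psi(x_1,\xi')>\tau}$ contributing nothing on this set, and the boundary term produced by the variable endpoint $\min\{x_1,y_1\}$ being of the same order —
\[
\bigl|\partial_{x_1}^m\partial_{y_1}^n k_{\tau,0}(x_1,y_1,\xi')\bigr|\;\lesssim\;\psi^{\,m+n-1}\,e^{-c\psi|x_1-y_1|},\qquad m+n\leq 2.
\]
Consequently, for each $\xi'$ on $\operatorname{supp}P_{hf,\tau}$, Schur's test on $L^2(X_0,X_1)$ shows that the fiber operator with kernel $\partial_{x_1}^m\partial_{y_1}^nk_{\tau,0}(\cdot,\cdot,\xi')$ has norm $\lesssim\psi^{\,m+n-2}$, since integrating the exponential in either variable costs a factor $\psi^{-1}$.

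\emph{The $L^{2}$-input estimates \eqref{Est-K-tau-0-f-L-2-HF}.} Writing $\nabla=(\partial_1,\nabla')$, estimating the $\partial_1$ component through $\partial_{x_1}k_{\tau,0}$ and the $\nabla'$ component as the Fourier multiplier $\i\xi'$ with $|\xi'|\lesssim\psi$, and using Plancherel in $x'$ together with the fiber bounds above and $\psi\geq 2\tau$,
\[
\tau^{2}\|P_{hf,\tau}K_{\tau,0}f\|_{L^2(\Omega)}+\tau\|\nabla P_{hf,\tau}K_{\tau,0}f\|_{L^2(\Omega)}\;\lesssim\;\Bigl(\int_{\psi\gtrsim\tau}\psi^{-2}\,\|\widehat f(\cdot,\xi')\|_{L^2(X_0,X_1)}^{2}\,d\xi'\Bigr)^{1/2}\;\lesssim\;\|f\|_{L^2(\Omega)}.
\]
The term involving $L^{\frac{2d}{d-2}}(\Omega)$ then follows from the Sobolev embedding $H^1(\Omega)\hookrightarrow L^{\frac{2d}{d-2}}(\Omega)$ applied to $\tau\,P_{hf,\tau}K_{\tau,0}f$, whose full $H^1(\Omega)$-norm has just been bounded by $\|f\|_{L^2(\Omega)}$. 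This proves \eqref{Est-K-tau-0-f-L-2-HF}.

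\emph{The $L^{\frac{2d}{d+2}}$-input estimates \eqref{Est-K-tau-0-f-L-2*'-HF}, and main difficulty.} These are obtained by duality. The adjoint $(P_{hf,\tau}K_{\tau,0})^{*}=K_{\tau,0}^{*}P_{hf,\tau}$ acts fiberwise with kernel $\overline{k_{\tau,0}(y_1,x_1,\xi')}$, which satisfies the same bounds as above (as do the operators obtained by composing it with the multiplier $\nabla'$ or with one more $x_1$-derivative, each extra derivative costing a factor $\psi$ absorbed by the $\psi^{-1}$ in the kernel). Hence, by Sobolev and the fiber Schur bounds,
\[
\|K_{\tau,0}^{*}P_{hf,\tau}g\|_{L^{\frac{2d}{d-2}}(\Omega)}\lesssim\tau^{-1}\|g\|_{L^2(\Omega)},\qquad \|K_{\tau,0}^{*}P_{hf,\tau}\div' G'\|_{L^{\frac{2d}{d-2}}(\Omega)}+\|(\partial_1K_{\tau,0})^{*}P_{hf,\tau}G_1\|_{L^{\frac{2d}{d-2}}(\Omega)}\lesssim\|G'\|_{L^2(\Omega)}+\|G_1\|_{L^2(\Omega)}.
\]
Testing $P_{hf,\tau}K_{\tau,0}f$ against an arbitrary $g\in L^2(\Omega)$, and $\nabla P_{hf,\tau}K_{\tau,0}f$ against an arbitrary $G=(G_1,G')\in L^2(\Omega;\C^d)$ — integrating by parts in $x'$ for the $\nabla'$ component, where no boundary term appears, and moving $\partial_1$ onto the kernel through the adjoint for the $\partial_1$ component, so as to avoid the boundary term at $x_1=X_1$ — Hölder's inequality and the three displayed bounds give
\[
\tau\|P_{hf,\tau}K_{\tau,0}f\|_{L^2(\Omega)}+\|\nabla P_{hf,\tau}K_{\tau,0}f\|_{L^2(\Omega)}\;\lesssim\;\|f\|_{L^{\frac{2d}{d+2}}(\Omega)},
\]
which is \eqref{Est-K-tau-0-f-L-2*'-HF}. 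The only genuinely delicate point in the whole argument is the bookkeeping in the kernel bounds: one must differentiate $k_{\tau,0}$ in $x_1$ and $y_1$ carefully, discarding the contribution of $1_{\psi(x_1,\xi')>\tau}$ on $\operatorname{supp}P_{hf,\tau}$ and checking that both the boundary term from the endpoint $\min\{x_1,y_1\}$ and each differentiation cost at most one power of $\psi$. All the remaining steps — the fiberwise Schur estimates, Plancherel, the Sobolev embedding, and the duality — are routine, and, unlike the borderline estimates of Proposition \ref{Prop-OpNorm-Est-K0}, no Fourier restriction input is needed here since we work in the elliptic regime.
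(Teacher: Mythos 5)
Your overall architecture is sound and your $L^2$-input Hilbertian estimates are essentially the paper's: the high-frequency kernel bounds $|k_{\tau,0}|\lesssim\psi^{-1}e^{-c\psi|x_1-y_1|}$ with $\psi\geq 2\tau$ on $\operatorname{supp}P_{hf,\tau}$ are exactly \cite[Lemma 6.4]{Dehman-Ervedoza-Thabouti-2023} as used in the paper, and your fiberwise Schur test plus Plancherel is equivalent to the paper's Young's inequality in $x_1$. Where you genuinely diverge is in the non-Hilbertian exponents: the paper obtains the $\mathscr{L}(L^{\frac{2d}{d+2}},L^2)$ and $\mathscr{L}(L^2,L^{\frac{2d}{d-2}})$ bounds through the Stein--Tomas machinery of Proposition \ref{Prop-Est-Fourier-Op} (weighted $\lambda$-integrals of $\|k_{\tau,0}\|_{L^\infty(\Sigma_{x_1})}$, then Young or Hardy--Littlewood--Sobolev in $x_1$ on $\R$), whereas you invoke the Sobolev embedding $H^1(\Omega)\hookrightarrow L^{\frac{2d}{d-2}}(\Omega)$ and duality. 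The exponents match because the Stein--Tomas exponents in dimension $n=d-1$ coincide with the Sobolev exponents in dimension $d$, and in the elliptic (high-frequency) regime the Sobolev route is indeed the natural Koch--Tataru-style argument.

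The gap is in that Sobolev step. The proposition requires $C$ and $\tau_0$ independent of $X_0,X_1$, and \eqref{Def-Strip} imposes no lower bound on the width $X_1-X_0$. On the strip $(X_0,X_1)\times\R^{d-1}$ the embedding constant is not uniform: by scaling, the best one has for a general $H^1$ function is $\|g\|_{L^{2d/(d-2)}(\Omega)}\leq C\bigl((X_1-X_0)^{-1}\|g\|_{L^2(\Omega)}+\|\nabla g\|_{L^2(\Omega)}\bigr)$ (test on functions constant in $x_1$), and $g=P_{hf,\tau}K_{\tau,0}f$ is neither compactly supported in $x_1$ nor extendable to $\R^d$ with uniform cost. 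Feeding your bounds $\|g\|_{L^2}\lesssim\tau^{-2}\|f\|_{L^2}$, $\|\nabla g\|_{L^2}\lesssim\tau^{-1}\|f\|_{L^2}$ into this yields $\tau\|g\|_{L^{2d/(d-2)}}\lesssim\bigl(1+\tau^{-1}(X_1-X_0)^{-1}\bigr)\|f\|_{L^2}$, which is only admissible when $\tau\gtrsim(X_1-X_0)^{-1}$; the same defect propagates through your duality argument for \eqref{Est-K-tau-0-f-L-2*'-HF}. So as written the proof does not deliver the claimed uniformity. To close the gap you must either replace the Sobolev step by an argument uniform in the strip (this is precisely what the paper's route via $\bigl(\int_{2\tau}^\infty\|k_{\tau,0}\|^2_{L^\infty(\Sigma_{x_1})}\lambda^{1-\frac2d}d\lambda\bigr)^{1/2}\lesssim e^{-\tau|x_1-y_1|}\tau^{-1/d}$, Proposition \ref{Prop-Est-Fourier-Op} and Young/HLS on $\R$ accomplishes), or restrict the statement to strips of width bounded below. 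A secondary, minor point: your pointwise bound $|\partial_{x_1}^m\partial_{y_1}^nk_{\tau,0}|\lesssim\psi^{m+n-1}e^{-c\psi|x_1-y_1|}$ for $m+n=2$ misses a Dirac contribution on the diagonal coming from the endpoint $\min\{x_1,y_1\}$; it is harmless for the Schur test (it contributes operator norm $O(1)=O(\psi^{m+n-2})$) but should be acknowledged.
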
 
	\begin{remark}
		\label{Remark-gain-K-tau-0}
		It is interesting to compare the estimates in Proposition \ref{Prop-OpNorm-Est-K0-Improved} to the ones in Proposition \ref{Prop-OpNorm-Est-K0}. In particular, one sees that for the Hilbertian estimates, i.e. for the $\mathscr{L}(L^2(\Omega), H^1(\Omega))$ norm of $P_{hf} K_{\tau,0}$, the estimates \eqref{Est-K-tau-0-f-L-2-HF} are better than the ones in \eqref{Est-K-tau-0-f-L-2-HF} for $K_{\tau, 0}$ by a factor $\tau^{\frac{1}{2}}$. This improvement is weaker for the estimates in the  $\mathscr{L} (H^1(\Omega), L^{\frac{2d}{d-2}}(\Omega))$ and $\mathscr{L} (L^{\frac{2d}{d+2}}(\Omega), H^1(\Omega))$-norms of $P_{hf} K_{\tau,0}$, which still gains a factor $\tau^{\frac{1}{4}-\frac{1}{2d}}$ compared to the $\mathscr{L} (H^1(\Omega), L^{\frac{2d}{d-2}}(\Omega))$ and $\mathscr{L} (L^{\frac{2d}{d+2}}(\Omega), H^1(\Omega))$-norms of $K_{\tau,0}$.
	\end{remark}
	
	\begin{proposition}
		\label{Prop-OpNorm-Est-K1-Improved}
		Under the setting of Theorem \ref{Thm-Carleman-Strip-2-improved}. 
		There exist $C>0$ and $\tau_0 \geq 1$ independent of $X_0, X_1$ (and depending only on $c_0$, $m_*$ and $M_*$ in \eqref{Coercivity} and \eqref{Pseudo-Convexity-Strip}), such that for all $j \in \{1, \cdots, d\}$, for all $\tau \geq \tau_0$ and for all $f \in L^2(\Omega)$,
		\begin{equation}	
		\label{Est-K-tau-j-f-L-2-Improved}
		\| P_{hf, \tau} K_{\tau,j} f\|_{L^{\frac{2d}{d-2}}(\Omega)} 
		+
		\tau \| P_{hf, \tau}K_{\tau,j} f\|_{L^{2}(\Omega)} 
		+
		\| \nabla P_{hf, \tau} K_{\tau,j} f\|_{L^2(\Omega)}
		\leq 
		C \| f \|_{L^2(\Omega)}. 
		\end{equation}
	\end{proposition}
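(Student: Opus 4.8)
The proof follows exactly the same scheme as that of Proposition \ref{Prop-OpNorm-Est-K0-Improved}: the idea is to exploit that on the range of the high-frequency projector $P_{hf,\tau}$ of \eqref{Def-P-hf} the conjugated operator \eqref{Conjugated-Operator-strip} is \emph{uniformly elliptic}, which is visible directly on the explicit kernels of Proposition \ref{Prop-parametrix-Strip-1}. Since $P_{hf,\tau}$ is a Fourier multiplier in $\xi'$ commuting with each $K_{\tau,j}$, the operator $P_{hf,\tau}K_{\tau,j}$ has partial-Fourier kernel $\eta(\psi(X_1,\xi')/\tau)\,k_{\tau,j}(x_1,y_1,\xi')$, supported in $\{\psi(X_1,\xi')\geq 2\tau\}$. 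Because every norm in \eqref{Est-K-tau-j-f-L-2-Improved} is applied to a function built from an $L^2(\Omega)$ datum, I would first use Plancherel in $x'$ to reduce the three estimates to bounds, uniform in $\xi'$ on this frequency support, for the one-dimensional integral operators on $L^2((X_0,X_1))$ with kernels $k_{\tau,j}(x_1,y_1,\xi')$, $\partial_{x_1}k_{\tau,j}(x_1,y_1,\xi')$ and $\xi_k\,k_{\tau,j}(x_1,y_1,\xi')$, and then integrate the resulting bounds in $\xi'$. Two elementary facts about $\psi$ from \eqref{Def-Psi} are used repeatedly: since $\lambda_j>0$ by \eqref{Pseudo-Convexity-Strip} and $1-x_1\lambda_j\geq c_0^{-2}>0$ on $[X_0,X_1]$ by \eqref{Coercivity}, the map $x_1\mapsto\psi(x_1,\xi')$ is non-increasing, so $\psi_{\min}(\xi'):=\inf_{x_1\in[X_0,X_1]}\psi(x_1,\xi')=\psi(X_1,\xi')\geq 2\tau$ on the support of $P_{hf,\tau}$, while \eqref{Coercivity} also forces $\psi(x_1,\xi')\leq c_0^2\,\psi_{\min}(\xi')$ there. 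In particular the critical set $\{\psi(x_1,\xi')=\tau\}$ is never met, the indicators $1_{\psi(x_1,\xi')\leq\tau}$ in \eqref{Def-k-tau-0-x1-y1-xi'}--\eqref{Def-k-tau-1-x1-y1-xi'} vanish, and each kernel reduces to a single branch.

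The core is then a set of off-diagonal kernel bounds, valid on this frequency support for $\tau\geq\tau_0$. Plugging $\psi(\cdot,\xi')\geq\psi_{\min}(\xi')\geq 2\tau$ into the exponents of \eqref{Def-k-tau-0-x1-y1-xi'} gives
\[
|k_{\tau,0}(x_1,y_1,\xi')|\lesssim\frac{1}{\psi_{\min}(\xi')}\,e^{-c'\psi_{\min}(\xi')|x_1-y_1|},\qquad |\partial_{x_1}k_{\tau,0}(x_1,y_1,\xi')|+|\xi_k|\,|k_{\tau,0}(x_1,y_1,\xi')|\lesssim e^{-c'\psi_{\min}(\xi')|x_1-y_1|},
\]
which express precisely that on the high-frequency range the factorisation of \eqref{Conjugated-Operator-strip} recalled after Proposition \ref{Prop-parametrix-Strip-1} is a product of two first-order operators with symbols $\pm(\psi\mp\tau)$ of size $\sim\psi_{\min}$ and bounded away from $0$. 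For $j\in\{2,\dots,d\}$ one has $k_{\tau,j}=\i\xi_j k_{\tau,0}$ by \eqref{Def-k-tau-j-x1-y1-xi'} and $|\xi_j|\lesssim\psi_{\min}(\xi')$, whence $|k_{\tau,j}|\lesssim e^{-c'\psi_{\min}|x_1-y_1|}$ and $|\partial_{x_1}k_{\tau,j}|+|\xi_k|\,|k_{\tau,j}|\lesssim\psi_{\min}(\xi')\,e^{-c'\psi_{\min}|x_1-y_1|}$. For $j=1$ the two extra terms in \eqref{Def-k-tau-1-x1-y1-xi'} obey the same bounds — the boundary term $1_{y_1<x_1}e^{\tau(x_1-y_1)-\int_{y_1}^{x_1}\psi}$ because $\tau-\psi_{\min}\leq-\tfrac12\psi_{\min}$, and the term $(\tau+\psi(y_1,\xi'))k_{\tau,0}$ because $\tau+\psi\lesssim\psi_{\min}$ — with one subtlety: $\partial_{x_1}$ hitting the indicator $1_{y_1<x_1}$ produces a Dirac mass $\delta(x_1-y_1)$, the boundary term having value $1$ on the diagonal; this represents the identity operator and contributes precisely $P_{hf,\tau}f$ to $\partial_1 P_{hf,\tau}K_{\tau,1}f$, which is harmless since it is bounded in $L^2(\Omega)$ by $\|f\|_{L^2(\Omega)}$.

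With these bounds in hand, I would finish by Schur's lemma in the $x_1$ variable for each fixed $\xi'$, using $\int_{X_0}^{X_1}e^{-c'\psi_{\min}(\xi')|x_1-y_1|}\,dy_1\leq C/\psi_{\min}(\xi')$ — a bound uniform in $X_0,X_1$ exactly because $\psi_{\min}\geq\tau_0\geq1$ makes the exponential decay genuine and the finiteness of $(X_0,X_1)$ only helps. This yields $L^2((X_0,X_1))$-operator norms $\lesssim\psi_{\min}(\xi')^{-1}$, $\lesssim1$, $\lesssim1$ for the operators with kernels $k_{\tau,j}$, $\partial_{x_1}k_{\tau,j}$, $\xi_k k_{\tau,j}$ respectively. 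Plancherel in $x'$ together with $\psi_{\min}(\xi')\geq 2\tau$ then gives $\tau\|P_{hf,\tau}K_{\tau,j}f\|_{L^2(\Omega)}+\|\nabla P_{hf,\tau}K_{\tau,j}f\|_{L^2(\Omega)}\leq C\|f\|_{L^2(\Omega)}$, and the remaining term $\|P_{hf,\tau}K_{\tau,j}f\|_{L^{2d/(d-2)}(\Omega)}$ is controlled by $\|P_{hf,\tau}K_{\tau,j}f\|_{H^1(\Omega)}\leq C\|f\|_{L^2(\Omega)}$ through the Sobolev embedding $H^1(\Omega)\hookrightarrow L^{2d/(d-2)}(\Omega)$ for the slab $\Omega=(X_0,X_1)\times\R^{d-1}$, whose constant is uniform in $X_0,X_1$ since $\max\{|X_0|,X_1\}\leq1$.

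I expect the main (though essentially routine) difficulty to be the careful verification of the off-diagonal kernel bounds while keeping every constant independent of $X_0$ and $X_1$ — in particular the clean treatment of the Dirac mass coming from $\partial_{x_1}1_{y_1<x_1}$ in $k_{\tau,1}$, and the bookkeeping between $\psi(x_1,\xi')$ and $\psi_{\min}(\xi')$, which are interchangeable up to the constant $c_0$ of \eqref{Coercivity}. There is no new analytic ingredient: the whole gain stems from the fact that on the high-frequency range inverting \eqref{Conjugated-Operator-strip} buys two full powers of $\psi_{\min}\gtrsim\tau$, which is the same mechanism already used, in the Hilbertian setting, for the estimates of \cite{Dehman-Ervedoza-Thabouti-2023} recalled in Propositions \ref{Prop-OpNorm-Est-K0} and \ref{Prop-OpNorm-Est-K1}.
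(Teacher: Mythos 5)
Your treatment of the $L^2$ and $H^1$ parts of \eqref{Est-K-tau-j-f-L-2-Improved} is correct and matches what the paper does (or leaves to the reader): on the support of $P_{hf,\tau}$ the indicators $1_{\psi\leq\tau}$ vanish, the kernels reduce to a single branch with off-diagonal decay $e^{-c\psi_{\min}(\xi')|x_1-y_1|}$ where $\psi_{\min}(\xi')=\psi(X_1,\xi')\geq 2\tau$, and Schur's test in $x_1$ combined with Plancherel in $x'$ yields $\tau\|P_{hf,\tau}K_{\tau,j}f\|_{L^2}+\|\nabla P_{hf,\tau}K_{\tau,j}f\|_{L^2}\leq C\|f\|_{L^2}$; your handling of the Dirac mass produced by $\partial_{x_1}1_{y_1<x_1}$ in $k_{\tau,1}$ is also right. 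Where you genuinely depart from the paper is the $L^{\frac{2d}{d-2}}$ bound: the paper proves $\bigl(\int_{2\tau}^\infty\|k_{\tau,j}(x_1,y_1,\lambda,\cdot)\|^2_{L^\infty(\Sigma_{x_1})}\lambda^{1-\frac2d}\,d\lambda\bigr)^{1/2}\leq C|x_1-y_1|^{-1+\frac1d}$ and then applies the Stein--Tomas multiplier estimate of Proposition \ref{Prop-Est-Fourier-Op} slice-wise in $x'$ followed by Hardy--Littlewood--Sobolev in $x_1\in\R$, whereas you deduce it from the $H^1$ bound via Sobolev embedding on the strip.

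That last step has a genuine gap as written. The embedding $H^1((X_0,X_1)\times\R^{d-1})\hookrightarrow L^{\frac{2d}{d-2}}((X_0,X_1)\times\R^{d-1})$ does \emph{not} have a constant uniform in $X_0,X_1$: the hypothesis $\max\{|X_0|,X_1\}\leq 1$ only bounds the width $L=X_1-X_0$ from above, and the constant degenerates as $L\to0$. Indeed, testing with $u(x_1,x')=v(x')$ gives $\|u\|_{L^{2d/(d-2)}(\Omega)}/\|u\|_{H^1(\Omega)}\sim L^{-1/d}\,\|v\|_{L^{2d/(d-2)}(\R^{d-1})}/\|v\|_{H^1(\R^{d-1})}$, which blows up for thin strips; even exploiting the improved bound $\tau\|u\|_{L^2}+\|\nabla u\|_{L^2}\leq C\|f\|_{L^2}$, an extension-by-reflection argument only yields a constant of order $1+(\tau L)^{-1}$, and nothing in the hypotheses of Theorem \ref{Thm-Carleman-Strip-2-improved} prevents $\tau L$ from being small. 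Since the proposition explicitly requires $C$ independent of $X_0,X_1$, you need either to add a frequency-localized substitute for Sobolev (e.g.\ a slice-wise Gagliardo--Nirenberg argument exploiting the support of $\widehat{P_{hf,\tau}u}$ in $\{|\xi'|\gtrsim\tau\}$, carefully summed in $x_1$) or, as the paper does, to bypass the embedding entirely by combining the restriction theorem in $x'$ with Hardy--Littlewood--Sobolev on the whole line in $x_1$, both of which are manifestly insensitive to $X_0,X_1$. Everything else in your proposal is sound.
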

	
	\begin{remark}
		\label{Remark-gain-K-tau-j}
		Here again, as in Remark \ref{Remark-gain-K-tau-0}, comparing the estimates in Proposition \ref{Prop-OpNorm-Est-K1-Improved} to the ones in Proposition \ref{Prop-OpNorm-Est-K1}, we see that, for $j \geq 1$, there is again of a factor $\tau^{\frac{1}{4}-\frac{1}{2d}}$ when considering the $\mathscr{L} (L^2(\Omega), L^{\frac{2d}{d-2}}(\Omega))$-norm of the operator $P_{hf, \tau} K_{\tau,j}$ compared to the norm of $K_{\tau,j}$, and of a factor $\tau^{\frac{1}{2}}$ when considering the $\mathscr{L} (L^2(\Omega), H^1(\Omega))$-norms. 
	\end{remark}
	
	Before going into the proofs of Propositions \ref{Prop-OpNorm-Est-K0-Improved} and \ref{Prop-OpNorm-Est-K1-Improved}, we point out that each operator $K_{\tau,j}$ for $j \in \{0, \cdots, d\}$ is a Fourier multiplier operator in the vertical variable. We can therefore use the Stein-Tomas restriction Theorem \cite{tomas1975restriction} to estimate their behavior as an operator from $L^p(\R^{d-1})$ to $L^q(\R^{d-1})$ for some values of $p$ and $q$. This approach is briefly recalled in Appendix \ref{appendix Fourier multiplier operators} with a suitable parametrization of the phase space $\xi' \in \R^{d-1}$ as $\xi' \mapsto (\psi(a, \xi') , \xi' /\psi(a, \xi'))$ for $\psi$ as in \eqref{Def-Psi} and $a \in [X_0, X_1]$ adapted to the kernels appearing in Proposition \ref{Prop-parametrix-Strip-1}. The full details can be found in \cite[Section 5]{Dehman-Ervedoza-Thabouti-2023}.
	
	\begin{proof}[Proof of Proposition \ref{Prop-OpNorm-Est-K0-Improved}]
		In view of the results in Proposition \ref{Prop-Est-Fourier-Op}, we first estimate weighted norms of $k_{\tau,0}(x_1, y_1, \cdot)$ for $x_1$ and $y_1$ in $[X_0,X_1]$ (recall the definition of $k_{\tau,0}$ in \eqref{Def-k-tau-0-x1-y1-xi'}). We also identify $\xi' \in \R^{d-1}$ with pairs $(\lambda, \omega') \in \R_+ \times \Sigma_{x_1}$, where $\Sigma_{x_1} = \{ \omega' \in \R^{d-1}, \psi(x_1, \omega') = 1\}$, through the formula $\xi' = \lambda \omega'$, or equivalently $\lambda = \psi(x_1,\xi')$ and $\omega' = \xi' /\psi(x_1,\xi')$. With a slight abuse of notations, we denote $k_{\tau,0}$ similarly whether it is written in terms of $\xi' \in \R^{d-1}$ or in terms of $(\lambda, \omega') \in \R_+ \times \Sigma_{x_1}$.
		
		From \cite[Lemma 6.4]{Dehman-Ervedoza-Thabouti-2023}, there exists a constant $C>0$ such that for all $\tau \geqslant 1 $, $\lambda > \tau$, and all $x_1, y_1 \in [X_0,X_1]$, we have 
		\begin{equation}
		\label{Est-k-tau-0-x1-y1-High-Freq}
		\| k_{\tau, 0}(x_1,y_1,\lambda, \cdot) \|_{L^\infty(\Sigma_{x_1})} 
		\leq 
		\left\{ 
		\begin{array}{ll}
		\ds \frac{C}{\lambda} e^{-(\lambda-\tau) |x_1-y_1|- \lambda (x_1-y_1)^2/C_1}
		, & \text{ if }  y_1<x_1,
		\smallskip\\
		\ds \frac{C}{\lambda} e^{-(\lambda/C + \tau)|y_1 - x_1|}
		, & \text{ if }  y_1>x_1.
		\end{array}
		\right.
		\end{equation}
		Arguing as in  \cite[Lemma 6.6]{Dehman-Ervedoza-Thabouti-2023}, we deduce 
		\begin{equation}
		\left( \int _{2 \tau  } ^{\infty}\norm{  k_{\tau, 0} (x_1, y_1, \lambda, .)  }_{L^{\infty}(\Sigma_ {x_1}  ) } ^2 \lambda^{1-\frac{2}{d}} d \lambda \right)^{\frac{1}{2}} \leqslant  C e^{- \tau |y_1- x_1|} \tau ^{-\frac{1}{d}}.
		\end{equation}
		
		Accordingly, using Proposition \ref{Prop-Est-Fourier-Op}, Young's inequality and the fact that for all $x_1 \in [X_0, X_1]$, $  \lambda = \psi(x_1, \xi') \geq \psi(X_1, \xi')\geq 2 \tau$ due to  \eqref{Pseudo-Convexity-Strip}, we have, for $f \in L^{\frac{2d}{d+2}}(\Omega)$ and $\tau \geq 1$,
		\begin{align}
		&\| P_{hf, \tau} K_{\tau, 0} f \|_{L^{2}(\Omega)} 
		\leq 
		\norm{ \norm{ P_{hf, \tau} K_{\tau, 0} f(x_1, \cdot) }_{L_{x'}^{2}(\R^{d-1})}}_{L_{x_1}^{2}(X_0,X_1)}
		\notag
		\\
		&\quad
		\leq
		\norm{ \int_{X_0}^{X_1} \left(\int_{2 \tau}^\infty \| k_{\tau, 0}(x_1, y_1, \lambda,\cdot)\|_{L^\infty(\Sigma_{x_1}) }^2 \lambda^{1 - \frac{2}{d} } \, d\lambda\right)^{\frac{1}{2}} \norm{ f(y_1, \cdot)}_{L_{y'}^{\frac{2d}{d+2}}(\R^{d-1}) } d y_1  }_{L_{x_1}^{2}(X_0,X_1)}
		\notag
		\\
		&\quad\leq
		C 
		\norm{ \left(y_1 \mapsto e^{- \tau |y_1|} \tau^{-\frac{1}{d}}\right) \star_{y_1}  \norm{ f(y_1, \cdot)}_{L_{y'}^{\frac{2d}{d+2}}(\R^{d-1})} }_{L_{x_1}^{2}(X_0,X_1)}
		\notag
		\\
		&\quad \leq 
		C \norm{y_1 \mapsto e^{- \tau |y_1|} \tau^{-\frac{1}{d}}}_{L^{\frac{d}{d-1}}(\R)} \norm{  \norm{ f(y_1, \cdot)}_{L_{y'}^{\frac{2d}{d+2}}(\R^{d-1}) }}_{L_{y_1}^{\frac{2d}{d+2}}(X_0,X_1)} = C \tau^{-1} \norm{f}_{L^{\frac{2d}{d+2}}(\Omega)}.
		\label{Est-K-tau-0-hf-L2*'-L2}
		\end{align}
		Similarly, we get that for all $f \in L^{2}(\Omega)$ and $\tau \geq 1$,
		\begin{equation}
		\label{Est-K-tau-0-hf-L2-L2*}
		\| P_{hf, \tau} K_{\tau, 0} f \|_{L^{\frac{2d}{d-2}}(\Omega)}
		\leq 
		C \tau^{-1} \norm{f}_{L^{2}(\Omega)}.	
		\end{equation}
		
		Similarly, arguing as in  \cite[Lemma 6.6]{Dehman-Ervedoza-Thabouti-2023}, we get 
		\begin{equation}\label{Est-Phf-K-tau-0-f-L-2-Improved}
		\left( \int _{2 \tau   } ^{\infty}\norm{ \lambda \omega'   k_{\tau, 0} (x_1, y_1, \lambda, 	\omega')  }_{L^{\infty}_{\omega'}(\Sigma_ {x_1}  ) } ^2 \lambda^{1-\frac{2}{d}} d \lambda \right)^{\frac{1}{2}} \leqslant C | y_1- x_1  |^{-1+ \frac{1}{d}}. 
		\end{equation}
		Using Proposition \ref{Prop-Est-Fourier-Op} and the Hardy-Littlewood-Sobolev theorem in $1$-d, we then get, for all $f \in L^{\frac{2d}{d+2}}(\Omega)$ and $\tau \geq 1$,
		\begin{align}
		&\| \nabla' P_{hf, \tau} K_{\tau, 0} f \|_{L^{2}(\Omega)} 
		\leq 
		\norm{ \norm{ \nabla' P_{hf, \tau} K_{\tau, 0} f(x_1, \cdot) }_{L_{x'}^{2}(\R^{d-1})}}_{L_{x_1}^{2}(X_0,X_1)}
		\notag
		\\
		&\quad
		\leq
		\norm{ \int_{X_0}^{X_1} \left(\int_{2 \tau}^\infty \| \lambda k_{\tau, 0}(x_1, y_1, \lambda,\cdot)\|_{L^\infty(\Sigma_{x_1}) }^2 \lambda^{1 - \frac{2}{d} } \, d\lambda\right)^{\frac{1}{2}} \norm{ f(y_1, \cdot)}_{L_{y'}^{\frac{2d}{d+2}}(\R^{d-1}) } d y_1  }_{L_{x_1}^{2}(X_0,X_1)}
		\notag
		\\
		&\quad\leq
		C 
		\norm{ \left(y_1 \mapsto |y_1|^{-1+ \frac{1}{d}} \right) \star_{y_1}  \norm{ f(y_1, \cdot)}_{L_{y'}^{\frac{2d}{d+2}}}(\R^{d-1}) }_{L_{x_1}^{2}(X_0,X_1)}
		\notag
		\\
		&\quad \leq 
		C \norm{  \norm{ f(y_1, \cdot)}_{L_{y'}^{\frac{2d}{d+2}}(\R^{d-1}) }}_{L_{y_1}^{\frac{2d}{d+2}}(X_0,X_1)} = C \norm{f}_{L^{\frac{2d}{d+2}}(\Omega)}.
		\label{Est-nabla'-K-tau-0-hf-L2*'-L2}
		\end{align}
		We conclude the estimate \eqref{Est-K-tau-0-f-L-2*'-HF} by combining \eqref{Est-K-tau-0-hf-L2*'-L2}, \eqref{Est-nabla'-K-tau-0-hf-L2*'-L2} and \eqref{Est-K-tau-0-f-L-2*'} for the estimate of $\partial_1 P_{hf, \tau} K_{\tau, 0} f$ for $f \in L^{\frac{2d}{d+2}}(\Omega)$.
		
		To conclude \eqref{Est-K-tau-0-f-L-2-HF}, in view of \eqref{Est-K-tau-0-hf-L2-L2*}, we only need to estimate the 	$\mathscr{L}(L^2(\Omega))$ and $\mathscr{L}(L^2(\Omega), H^1(\Omega))$-norms of $P_{hf, \tau} K_{\tau, 0}$. These are easier since the estimate \eqref{Est-k-tau-0-x1-y1-High-Freq} yields that there exists a constant $C>0$ such that for all $\tau \geqslant 1 $, $\lambda > 2 \tau$, and all $x_1, y_1 \in [X_0,X_1]$, we have 
		\begin{equation}
		\label{Est-k-tau-0-x1-y1-High-Freq-bis}
		(\tau + \lambda) \| k_{\tau, 0}(x_1,y_1,\lambda, \cdot) \|_{L^\infty(\Sigma_{x_1})} 
		\leq 
		C e^{ - \tau |x_1 - y_1|}
		\end{equation}
		We then immediately get, by Young's inequality, that for $f \in L^2(\Omega)$ and $\tau \geq 1$, 
		\begin{align}
		&\tau \| P_{hf, \tau} K_{\tau, 0} f \|_{L^{2}(\Omega)} + \| \nabla' P_{hf, \tau} K_{\tau, 0} f \|_{L^{2}(\Omega)} 
		\notag
		\\
		&\quad
		\leq 
		\tau 
		\norm{ \norm{P_{hf, \tau} K_{\tau, 0} f(x_1, \cdot) }_{L_{x'}^{2}(\R^{d-1})}}_{L_{x_1}^{2}(X_0,X_1)}
		+
		\norm{ \norm{ \nabla' P_{hf, \tau} K_{\tau, 0} f(x_1, \cdot) }_{L_{x'}^{2}(\R^{d-1})}}_{L_{x_1}^{2}(X_0,X_1)}
		\notag
		\\
		&\quad
		\leq
		C\norm{ \int_{X_0}^{X_1} e^{ - \tau |x_1 - y_1|} \norm{ f(y_1, \cdot)}_{L_{y'}^{2}(\R^{d-1}) } d y_1  }_{L_{x_1}^{2}(X_0,X_1)}
		\leq 
		C\tau^{-1} \norm{f}_{L^{2}(\Omega)}.
		\label{Est-nabla'-K-tau-0-hf-L2-L2}
		\end{align}
		This estimate, together with \eqref{Est-K-tau-0-hf-L2-L2*} and the estimate \eqref{Est-K-tau-0-f-L-2} for the estimate of $\partial_1 P_{hf, \tau} K_{\tau, 0} f$ for $f \in L^{2}(\Omega)$, gives the estimate  \eqref{Est-K-tau-0-f-L-2-HF}.
	\end{proof} 
	
	\begin{proof}[Proof of Proposition \ref{Prop-OpNorm-Est-K1-Improved}]
		We only sketch the proof of Proposition \ref{Prop-OpNorm-Est-K1-Improved} since it relies on similar arguments as the ones used in the proof of Proposition \ref{Prop-OpNorm-Est-K0-Improved}. 
		
		Let us explain the main steps to get the estimate \eqref{Est-K-tau-j-f-L-2-Improved}. Using \cite[Lemma 6.8]{Dehman-Ervedoza-Thabouti-2023}, we get a constant $C >0$ independent of $X_0, X_1$ (and depending only on $c_0$, $m_*$ and $M_*$ in \eqref{Coercivity} and \eqref{Pseudo-Convexity-Strip}), such that for all $x_1$ and $y_1$ in $[X_0, X_1]$, for all $\tau \geq 1$,  and $\lambda >0$,
		\begin{equation}
		\label{Est-k-tau-1-x1-y1}
		\| k_{\tau, 1}(x_1,y_1,\lambda, \cdot) \|_{L^\infty(\Sigma_{x_1})} 
		\leq  C e^{-|\tau - \lambda| | y_1 - x_1 | - \lambda  (y_1 - x_1 )^2/C } + C (\tau + \lambda ) \| k_{\tau, 0}(x_1,y_1,\lambda, \cdot) \|_{L^\infty(\Sigma_{x_1})}. 
		\end{equation}
		and, for $j \in \{2, \cdots, d\}$,  
		\begin{equation}
		\label{Est-k-tau-j-x1-y1}
		\| k_{\tau, j}(x_1,y_1,\lambda, \cdot) \|_{L^\infty(\Sigma_{x_1})} 
		\leq  C \lambda \| k_{\tau, 0}(x_1,y_1,\lambda, \cdot) \|_{L^\infty(\Sigma_{x_1})}. 
		\end{equation}
		
		The estimate \eqref{Est-Phf-K-tau-0-f-L-2-Improved} then yields the existence of a constant $C>0$ such that for all $\tau \geqslant 1$, $j \in \{2, \cdots, d\}$, and all $x_1, y_1 \in [X_0,X_1]$, 
		\begin{equation}
		\left( \int _{2 \tau } ^{\infty}\norm{ k_{\tau, j} (x_1, y_1, \lambda, .)  }_{L^{\infty}(\Sigma_ {x_1}  ) } ^2 \lambda^{1-\frac{2}{d}} d \lambda \right)^{\frac{1}{2}} \leqslant  C  | y_1- x_1  |^{-1+ \frac{1}{d}}. 
		\end{equation} 
		Similarly, one can derive from \eqref{Est-k-tau-1-x1-y1} that this also holds for $j = 1$.
		
		Using Proposition \ref{Prop-Est-Fourier-Op} and the Hardy-Littlewood-Sobolev theorem, we then deduce that there exists $C>0$ such that for all $j \in \{1, \cdots, d\}$, for all $\tau \geq 1$ and for all $f \in L^2(\Omega)$,
		\begin{equation*}	
		\| P_{hf,\tau}K_{\tau,j} f\|_{L^{\frac{2d}{d-2}}(\Omega)} 
		\leq 
		C \| f \|_{L^2(\Omega)}.
		\end{equation*}

		The estimates on the $\mathscr{L} (L^2(\Omega))$-norms of $P_{hf,\tau} K_{\tau,j}$ and $\nabla' P_{hf,\tau} K_{\tau,j}$ can be achieved more easily and are left to the reader, and the $\mathscr{L} (L^2(\Omega))$-norm of $\partial_1 P_{hf,\tau} K_{\tau,j}$ follows from Proposition \ref{Prop-OpNorm-Est-K1}.
	\end{proof} 
	%
	%
	
	%
	\paragraph{Low-frequency estimates.} In our arguments next, we will also need to understand the behavior of the  operator $(I - P_{hf, \tau}) K_{\tau,1}$ and show how it acts on $L^{\frac{2d}{d+2}}(\Omega)$:
	\begin{proposition}
		\label{Prop-OpNorm-Est-K1-Improved-LF}
		Under the setting of Theorem \ref{Thm-Carleman-Strip-2-improved}.
		There exist $C>0$ and $\tau_0 \geq 1$ independent of $X_0, X_1$ (and depending only on $c_0$, $m_*$ and $M_*$ in \eqref{Coercivity} and \eqref{Pseudo-Convexity-Strip}) such that for all $\tau \geq \tau_0$ and for all $f \in L^{\frac{2d}{d+2}}(\Omega)$,
		\begin{multline}	
		\label{Est-K-tau-1-f-L-2*'-Improved-LF}
		\| (I-P_{hf, \tau} )K_{\tau,1} f\|_{L^{\frac{2d}{d-2}}(\Omega)} 
		+
		\tau^{\frac{3}{4}+\frac{1}{2d}} \| (I-P_{hf, \tau} ) K_{\tau,1} f\|_{L^{2}(\Omega)} 
		\\
		+
		\| \partial_1 \widehat{(I-P_{hf, \tau} ) K_{\tau,1} f}\|_{L^2(\Omega_{1, \tau})}
		+
		\tau^{-\frac{1}{4}+\frac{1}{2d}} \| \nabla' (I-P_{hf, \tau} )  K_{\tau,1} f\|_{L^2(\Omega)}
		\leq 
		C \tau \| f \|_{L^{\frac{2d}{d+2}}(\Omega)},  
		\end{multline}
		where $\Omega_{1, \tau}$ is the set defined in \eqref{Def-Omega-1-tau}.
	\end{proposition}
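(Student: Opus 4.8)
The plan is to adapt, to the present $L^{\frac{2d}{d+2}}$-source / low-frequency setting, the analysis of the operator $K_{\tau,1}$ carried out in \cite[Propositions 6.7 and 6.10]{Dehman-Ervedoza-Thabouti-2023}, using the same tools as in the proof of Proposition \ref{Prop-OpNorm-Est-K0-Improved}. The starting point is the elementary but crucial observation that on the support of the symbol of $I-P_{hf,\tau}$ one has $\psi(X_1,\xi')\leqslant 3\tau$, whence, by \eqref{Coercivity}, $\psi(x_1,\xi')\leqslant c_0^2\,\psi(X_1,\xi')\leqslant C\tau$ for every $x_1\in[X_0,X_1]$; in other words, once $I-P_{hf,\tau}$ has been applied, the effective ``frequency'' $\lambda:=\psi(x_1,\xi')$ only ranges over $(0,C\tau)$. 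Combined with the kernel estimates of \cite[Lemmas 6.4 and 6.8]{Dehman-Ervedoza-Thabouti-2023} — which hold for \emph{every} $\lambda>0$, not only at high frequency, and which always carry, besides the exponential factor $e^{-c|\tau-\lambda||x_1-y_1|}$, a Gaussian factor $e^{-c\lambda(x_1-y_1)^2}$ — this gives, for $\lambda\in(0,C\tau)$ and all $x_1,y_1\in[X_0,X_1]$,
\begin{equation*}
\| k_{\tau,1}(x_1,y_1,\lambda,\cdot)\|_{L^\infty(\Sigma_{x_1})}\leqslant C\,e^{-c|\tau-\lambda||x_1-y_1|-c\lambda(x_1-y_1)^2},
\end{equation*}
together with the companion bound on $\|\lambda\,k_{\tau,1}(x_1,y_1,\lambda,\cdot)\|_{L^\infty(\Sigma_{x_1})}$ needed to handle $\nabla'$.

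I would then feed these bounds into Proposition \ref{Prop-Est-Fourier-Op}, exactly as in \eqref{Est-K-tau-0-hf-L2*'-L2}--\eqref{Est-nabla'-K-tau-0-hf-L2*'-L2}: the $L^{\frac{2d}{d+2}}_{x'}\to L^2_{x'}$ (resp. $L^{\frac{2d}{d+2}}_{x'}\to L^{\frac{2d}{d-2}}_{x'}$) norm of the $\xi'$-multiplier with symbol $k_{\tau,1}(x_1,y_1,\cdot)$ is dominated by the weighted quantity $\big(\int_0^{C\tau}\| k_{\tau,1}(x_1,y_1,\lambda,\cdot)\|_{L^\infty(\Sigma_{x_1})}^2\,\lambda^{1-\frac2d}\,d\lambda\big)^{1/2}$ and its variant with $\lambda\,k_{\tau,1}$; carrying out the $\lambda$-integral over $(0,C\tau)$ produces kernels $g_\tau(x_1-y_1)$ lying in $L^{\frac{d}{d-1}}(\R)$, after which one concludes by Young's and the Hardy--Littlewood--Sobolev inequalities in the $x_1$ variable, as in the proof of Proposition \ref{Prop-OpNorm-Est-K0-Improved}. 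To keep the bookkeeping light one may split $k_{\tau,1}=k_{\tau,1}^{\mathrm{exp}}+(\tau+\psi(y_1,\xi'))\,k_{\tau,0}$: the piece $(\tau+\psi(y_1,\xi'))\,k_{\tau,0}$ is $K_{\tau,0}$ composed with the Fourier multiplier of symbol $\tau+\psi(y_1,\xi')$, which on the range of $I-P_{hf,\tau}$ is $\tau$ times a H\"ormander--Mikhlin multiplier bounded uniformly in $x_1$, so this contribution is controlled by $\tau$ times the $K_{\tau,0}$-estimates \eqref{Est-K-tau-0-f-L-2*'}, while the genuinely new exponential part $k_{\tau,1}^{\mathrm{exp}}$ is treated by the kernel analysis just described. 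The contributions to $\partial_1\widehat{(I-P_{hf,\tau})K_{\tau,1}f}$ coming from differentiating the Heaviside factors $1_{y_1>x_1}$ reduce to a multiplier supported in $\{|\xi'|\lesssim\tau\}$ and are controlled by Bernstein's inequality (or bootstrapped from the defining ODE \eqref{Elliptic-Strip-w} as for $K_{\tau,0}$); here the restriction to $\Omega_{1,\tau}$ is used precisely so that differentiating the indicator $1_{\psi(x_1,\xi')\leqslant\tau}$ produces no singular term.

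The delicate point is the sharpness of the power of $\tau$. A crude estimate retaining only the exponential decay $e^{-c|\tau-\lambda||x_1-y_1|}$ of the kernels would lose a factor $\tau^{\frac14-\frac1{2d}}$ — one would then obtain, e.g., $\|(I-P_{hf,\tau})K_{\tau,1}f\|_{L^2(\Omega)}\lesssim \tau^{\frac12-\frac1d}\|f\|_{L^{\frac{2d}{d+2}}(\Omega)}$ instead of the required $\tau^{\frac14-\frac1{2d}}\|f\|_{L^{\frac{2d}{d+2}}(\Omega)}$. This loss is exactly recovered by keeping the Gaussian factor $e^{-c\lambda(x_1-y_1)^2}$ when integrating over the critical range $\lambda\approx\tau$, i.e. precisely where the conjugated operator in \eqref{Conjugated-Operator-strip} loses ellipticity; this is the same mechanism behind the $\tau^{\frac34+\frac1{2d}}$-type weights in Propositions \ref{Prop-OpNorm-Est-K0} and \ref{Prop-OpNorm-Est-R-tau}, and it is the reason why \eqref{Est-K-tau-1-f-L-2*'-Improved-LF} cannot be obtained merely by combining Bernstein's inequality with the $L^2$-based bounds of Proposition \ref{Prop-OpNorm-Est-K1}.
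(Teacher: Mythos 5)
Your proposal follows essentially the same route as the paper's proof: localize to $\lambda=\psi(x_1,\xi')\lesssim\tau$ via $I-P_{hf,\tau}$, use the kernel bounds of \cite[Lemmas 6.4 and 6.8]{Dehman-Ervedoza-Thabouti-2023} while keeping the Gaussian factor $e^{-c\lambda(x_1-y_1)^2}$ near the critical region $\lambda\approx\tau$ (which is indeed exactly what recovers the factor $\tau^{\frac14-\frac1{2d}}$), feed the weighted $\lambda$-integrals into Proposition \ref{Prop-Est-Fourier-Op}, and conclude by Young and Hardy--Littlewood--Sobolev in $x_1$; the only organizational difference is that the paper computes the kernel $k_{\tau,1,\partial_1}$ of $\partial_1(I-P_{hf,\tau})K_{\tau,1}$ explicitly rather than bootstrapping from the equation. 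One bookkeeping correction: for the $L^{\frac{2d}{d+2}}\to L^{\frac{2d}{d-2}}$ bound, Proposition \ref{Prop-Est-Fourier-Op} requires the $L^1$-weighted integral $\int_0^{C\tau}\|k_{\tau,1}(x_1,y_1,\lambda,\cdot)\|_{L^\infty(\Sigma_{x_1})}\,\lambda^{1-\frac2d}\,d\lambda$ (estimate \eqref{Est-K-a-k-Lp-Lp'}), not the square root of the $L^2$-weighted integral as you wrote; your pointwise kernel bound still yields the needed $C\tau\,|x_1-y_1|^{-1+\frac2d}$ for that integral, so the argument goes through.
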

	
	\begin{proof} 
		Similarly as in the previous proofs, we will use suitable bounds on the kernel $k_{\tau,1}$. Namely, we will use the bound \eqref{Est-k-tau-1-x1-y1}, the bound \eqref{Est-k-tau-0-x1-y1-High-Freq} and the following bound, obtained in \cite[Lemma 6.4]{Dehman-Ervedoza-Thabouti-2023}:  	There exist constants $C>0$ and $C_1 >0$ independent of $X_0, X_1$ (and depending only on $c_0$, $m_*$ and $M_*$ in \eqref{Coercivity} and \eqref{Pseudo-Convexity-Strip}) such that for all $x_1$ and $y_1$ in $[X_0, X_1]$, for all $\tau \geq 1$,  and $\lambda \leq \tau $,
		the kernel $k_{\tau,0}$ defined in \eqref{Def-k-tau-0-x1-y1-xi'}  satisfies 
		\begin{equation}
		\label{Est-k-tau-0-x1-y1-Low-Freq}
		\| k_{\tau, 0}(x_1,y_1,\lambda, \cdot) \|_{L^\infty(\Sigma_{x_1})} 
		\leq 
		\left\{
		\begin{array}{ll}
		\ds  C | y_1- x_1 | e^{-\tau | y_1- x_1 | },  \quad & \text{ if }  \ \lambda |y_1 -x_1| \leqslant 1,
		\smallskip\\		
		\ds \frac{C}{\lambda }e^{- (\tau - \lambda) |y_1- x_1|  -  \lambda (y_1- x_1) ^ 2/C_1} , \quad & \text{ if }  \lambda |y_1 -x_1| \geqslant 1.
		\end{array}
		\right.  
		\end{equation}

		Lemma 6.9 in \cite{Dehman-Ervedoza-Thabouti-2023} states that there exists a constant $C >0$  independent of $X_0, X_1$ (and depending only on $c_0$, $m_*$ and $M_*$ in \eqref{Coercivity} and \eqref{Pseudo-Convexity-Strip}) such that for all $x_1$ and $y_1$ in $[X_0, X_1]$, for all $\tau \geq 1$, 
		\begin{align}
		\label{est-k1-x1-y1-Lp-L2}
		& 
		\left( \int_{\lambda >0} \norm{k_{\tau,1}(x_1, y_1,\lambda, \cdot) }^2_{L^\infty(\Sigma_{x_1})} \, \lambda^{1 - \frac{2}{d}}\, d\lambda\right)^{\frac{1}{2}}
		\leq C\frac{1}{|x_1-y_1|^{1-\frac{1}{d}}} +  \tilde k_{\tau,1}(x_1-y_1), 
		\\
		& \qquad \text{ with $\tilde k_{\tau,1} \in L^{\frac{d}{d-1}} (\R)$ and }
		\|\tilde k_{\tau,1}\|_{L^{\frac{d}{d-1} } (\R)} \leq C \tau^{\frac{1}{4}-\frac{1}{2d}}.
		\notag
		\end{align} 
		Using then Proposition \ref{Prop-Est-Fourier-Op}, the Hardy-Littlewood-Sobolev theorem and Young's inequality, we deduce that the $\mathscr{L}(L^{\frac{2d}{d+2}}(\Omega),L^{2}(\Omega))$-norm of $ (I-P_{hf, \tau} )K_{\tau,1}$ is bounded by $C \tau^{\frac{1}{4}-\frac{1}{2d}}$.

		To get a bound on the $\mathscr{L}(L^{\frac{2d}{d+2}}(\Omega),L^{\frac{2d}{d-2}}(\Omega))$-norm of  $(I-P_{hf, \tau} )K_{\tau,1}$ (recall that $I -P_{hf, \tau}$ localizes at frequency $\xi'$ such that $\psi(X_1, \xi') \leq 3 \tau$), we first show that there exists a constant $C >0$, such that for all $x_1$ and $y_1$ in $[X_0, X_1]$, for all $\tau \geq 1$,
		\begin{align*}
		&     \int_0^{3 c_1 \tau} \| k_{\tau, 1}(x_1,y_1,\lambda, \cdot) \|_{L^\infty(\Sigma_{x_1})} \lambda^{1-\frac{2}{d}} d \lambda 
		\\
		& \leq  C \int_0^{3c_1 \tau} e^{-|\tau - \lambda| | y_1 - x_1 | - \lambda  (y_1 - x_1 )^2/C }  \lambda^{1-\frac{2}{d}} d \lambda +\tau  C\int_0^{3c_1\tau}	  \| k_{\tau, 0}(x_1,y_1,\lambda, \cdot) \|_{L^\infty(\Sigma_{x_1})}\lambda^{1-\frac{2}{d}} d \lambda 
		\\
		& \leqslant C \tau |x_1-y_1|^{-1+\frac{2}{d}},
		\end{align*}
			where the bound on the first term on the right-hand side is derived through simple calculations similar to \cite[Section 6]{Dehman-Ervedoza-Thabouti-2023}. For the second term, we have used \cite[Lemma 6.6]{Dehman-Ervedoza-Thabouti-2023}, where $c_1$ is defined by 
		$$
		c_1 = \sup_{x_1 \in [X_0, X_1]} \sup \{ \psi(x_1, \xi'), \, \text{s. t. } \psi(X_1, \xi') = 1\}. 
		$$ 
		Using then Proposition \ref{Prop-Est-Fourier-Op} and the Hardy-Littlewood-Sobolev theorem, the $\mathscr{L}(L^{\frac{2d}{d+2}}(\Omega),L^{\frac{2d}{d-2}}(\Omega))$-norm of  $(I-P_{hf, \tau} )K_{\tau,1}$ is bounded by $C \tau$.

		For the estimate on the $\mathscr{L}(L^{\frac{2d}{d+2}}(\Omega),L^{\frac{2d}{d-2}}(\Omega))$-norm $\nabla'(I-P_{hf,\tau}) K_{\tau,1}$, the crucial point is to prove that there exists a constant $C>0$ such that for all $x_1, y_1 \in [X_0, X_1]$, and for all $\tau \geqslant 1$, 
		\begin{equation}
		\label{est-k1-x1-y1-Lp-nabla'-L2}		
		\left(
		\int_{0}^{3 c_1 \tau} \norm{ \lambda \omega' k_{\tau,1}(x_1, y_1, \lambda, \omega') }^2_{L_{\omega'}^\infty(\Sigma_{x_1})} \, \lambda^{1 -\frac{2}{d}}\, d\lambda
		\right)^{\frac{1}{2}}
		\leq 
		C \tau^{\frac{5}{4}-\frac{1}{2d}} |x_1 - y_1|^{-1+\frac{1}{d}}. 
		\end{equation}
		If so, using again Proposition \ref{Prop-Est-Fourier-Op} and the Hardy-Littlewood-Sobolev
		inequality,  the $\mathscr{L}(L^{\frac{2d}{d+2}}(\Omega),L^{\frac{2d}{d-2}}(\Omega))$-norm $\nabla'(I-P_{hf,\tau}) K_{\tau,1}$ is bounded by $\tau^{\frac{5}{4}-\frac{1}{2d}}$.

		To prove inequality \eqref{est-k1-x1-y1-Lp-nabla'-L2}, we can bound the term on the left-hand side using \eqref{Est-k-tau-1-x1-y1} as follows
		\begin{align*}
		C \left(\int_0^{3 c_1 \tau} e^{-|\tau - \lambda| | y_1 - x_1 | - \lambda  (y_1 - x_1 )^2/C }  \lambda^{3-\frac{2}{d}} d \lambda \right)^{\frac{1}{2}} 
		+C \tau  \left(\int_0^{3 c_1 \tau}	  \| \lambda \omega' k_{\tau, 0}(x_1,y_1,\lambda, \omega') \|^2_{L_{\omega'}^\infty(\Sigma_{x_1})}\lambda^{1-\frac{2}{d}} d \lambda \right)^{\frac{1}{2}}.   
		\end{align*}
		Then, using \cite[Lemma 6.6]{Dehman-Ervedoza-Thabouti-2023}, we obtain the desired estimate on the second term. A straightforward computation yields the same bound on the first term.

		The estimate the $\mathscr{L}(L^{\frac{2d}{d+2}}(\Omega),L^{\frac{2d}{d-2}}(\Omega))$-norm
		of $\partial_1(I-P_{hf,\tau})K_{\tau,1}$ is more technical. First, we compute the kernel of the operator $\partial_{1}K_{\tau,1}$, denoted by $k_{\tau,1, \partial_1} $: for all $(x_1, \xi') \in \Omega_{1, \tau}$ and all $y_1 \in (X_0,X_1)$,
		\begin{align}	\label{Def-k-tau-1-d1-x1-y1-xi'}
		k_{\tau,1, \partial_1} (x_1,y_1, \xi')
		&=  -  1_{\psi(x_1, \xi') \leq \tau} 1_{y_1>x_1} (\tau-\psi(x_1,\xi')) e^{-\tau(y_1-x_1)+\int_{x_1}^{y_1} \psi(\tilde{y}_1,\xi') \, d \tilde{y}_1} \nonumber
		\\ 
		& + 1_{ \tau < \psi(x_1,\xi') } 1_{y_1 < x_1} (\tau-\psi(x_1,\xi')) e^{\tau(x_1-y_1) - \int_{y_1}^{x_1} \psi(\tilde{y}_1,\xi') \, d \tilde{y}_1} \nonumber 
		\\ 
		&+ k_{\tau,0,\partial_1}(x_1,y_1,\xi') (\tau + \psi(y_1,\xi')), 
		\end{align}
		where $k_{\tau, 0, \partial_1}$ is defined for $x_1, \ y_1$ in $[X_0, X_1]$ and $\xi' \in \R^{d-1}$, by
		\begin{align*}
		k_{\tau,0,\partial_1} (x_1, y_1, \xi')
		&= 
		-1_{x_1 < y_1} e^{-\tau(y_1-x_1) - \int_{x_1}^{y_1} \psi(\tilde y_1, \xi') \, d\tilde y_1}
		\\
		&	\ \ - 1_{\psi(x_1, \xi')> \tau} 
		(\tau - \psi(x_1, \xi')) 
		\int_{X_0}^{\min\{x_1,y_1\}} e^{-\tau(y_1-x_1) - \int_{\tilde x_1}^{x_1} \psi(\tilde y_1, \xi') \, d\tilde y_1 - \int_{\tilde x_1}^{y_1} \psi(\tilde y_1, \xi') \, d\tilde y_1} d\tilde x_1
		\notag
		\\
		&	\ \ + 1_{\psi(x_1, \xi')\leq \tau}  1_{x_1 < y_1} (\tau - \psi(x_1, \xi')) \int_{x_1}^{y_1} 
		e^{-\tau(y_1-x_1) + \int_{x_1}^{\tilde x_1} \psi(\tilde y_1, \xi') \, d\tilde y_1 - \int_{\tilde x_1}^{y_1} \psi(\tilde y_1, \xi') \, d\tilde y_1} d\tilde x_1. 
		\notag
		\end{align*}
		The kernel of $\partial_{1}(I- P_{hf,\tau})K_{\tau,1}$, denoted by $k_{\tau,1, \partial_1,lf} $ is then given by
		\begin{align}	\label{Def-k-tau-1-d1-lf-x1-y1-xi'}
		k_{\tau,1, \partial_1,lf} (x_1,y_1, \xi')
		&=  \left(1-\eta\left(\frac{\psi(X_1,\xi')}{\tau}\right)\right) k_{\tau,1,\partial_1}(x_1,y_1,\xi').
		\end{align}
		
		A tedious computation (similar to the ones in \cite[Lemma 6.4]{Dehman-Ervedoza-Thabouti-2023}) then shows that there exists a constant $C >0$ independent of $X_0$ and $X_1$, such that for all $x_1$ and $y_1$ in $[X_0, X_1]$, for all $\tau \geq 1$, and $0<\lambda <3 c_1 \tau$,
		\begin{equation*}
		\| k_{\tau, 1,\partial_1,lf}(x_1,y_1,\lambda, \cdot) \|_{L^\infty(\Sigma_{x_1})} 
		\leq  C \tau \left( e^{-|\tau - \lambda| | y_1 - x_1 | - \lambda  (y_1 - x_1 )^2/C } +  \| k_{\tau, 0,\partial_1}(x_1,y_1,\lambda, \cdot) \|_{L^\infty(\Sigma_{x_1})} \right). 
		\end{equation*}
		As a consequence, using \cite[Lemma 6.6]{Dehman-Ervedoza-Thabouti-2023} to bound the second term on the right-hand side of the last inequality, we can prove
		\begin{equation}
		\label{est-k1-d1-x1-y1-Lp-L2}			
		\left(\int_{0}^{3 c_1 \tau} \norm{   k_{\tau,1, \partial_1}(x_1, y_1, \lambda, \cdot) }^2_{L^\infty (\Sigma_{x_1})} \, \lambda^{1 -\frac{2}{d}}\, d\lambda\right)^{\frac{1}{2}}
		\leq
		C  \tau |x_1 - y_1|^{-1+\frac{1}{d}}.
		\end{equation}
		Using the Hardy-Littlewood-Sobolev inequality then yields that  the $\mathscr{L}(L^{\frac{2d}{d+2}}(\Omega),L^{\frac{2d}{d-2}}(\Omega))$-norm
		of $\partial_1(I-P_{hf,\tau})K_{\tau,1}$ is bounded by $C \tau$. This concludes the proof of Proposition \ref{Prop-OpNorm-Est-K1-Improved-LF}.
	\end{proof} 
	 
	%
	%
	\subsubsection{Proof of Theorem \ref{Thm-Carleman-Strip-2-improved} }\label{Subsubsec-Proof-Thm-Strip}
	
	With the various estimates established in the propositions presented in the previous subsection, we are now in a position to prove Theorem \ref{Thm-Carleman-Strip-2-improved}, that is the Carleman estimates in the strip. 
	
	\begin{proof}[Proof of Theorem \ref{Thm-Carleman-Strip-2-improved}] 
		Using Proposition \ref{Prop-parametrix-Strip-1}, if $w$ is compactly supported and satisfies \eqref{Elliptic-Strip-w} for some source terms $(f_2, f_{2*'},F_2, F_{2*'})$ as in \eqref{Reg-Source-Terms-Strip-Improved}, then 
		\begin{equation*}
		w = K_{\tau, 0} (f_2 + f_{2*'} ) 
		+ \sum_{j = 1}^d K_{\tau,j} ((F_2 + F_{2*'}) \cdot e_j)+R_\tau(w).
		\end{equation*}
		
		Recall that the operator $P_{hf, \tau}$ commutes with all the operators $(K_{\tau,j})_{j \in \{0, \cdots, d\}}$ and $R_\tau$. Accordingly, the high-frequency part of $w_{hf, \tau } = P_{hf, \tau} w$ satisfies
		$$
		w_{hf, \tau} = P_{hf,\tau}  K_{\tau, 0} (f_2 + f_{2*'} ) 
		+ \sum_{j = 1}^d P_{hf,\tau}  K_{\tau,j} ((F_2 + F_{2*'}) \cdot e_j)+  R_\tau (w_{hf, \tau}).
		$$
		Using the various estimates in Propositions \ref{Prop-OpNorm-Est-K0-Improved}, \ref{Prop-OpNorm-Est-K1-Improved}, and \ref{Prop-OpNorm-Est-R-tau}, at high-frequency, we obtain 
		\begin{multline} \label{Carleman-Strip-w-hf-2-Improved-Preliminary0}
		\tau^{\frac{3}{2}} \| w_{hf,\tau}\|_{L^{2}(\Omega)} +\tau^{\frac{1}{2}} \| \nabla w_{hf,\tau}\|_{L^{2}(\Omega)}
		\\
		\leq
		C\left(
		\tau^{-\frac{1}{2}} \| f_2 \|_{L^2(\Omega)} 
		+ 
		\tau^{\frac{1}{2}} \| f_{2*'} \|_{L^{\frac{2d}{d+2}}(\Omega)} 
		+
		\tau^{\frac{1}{2}}  \| F_2 + F_{2*'} \|_{L^2(\Omega)}
		\right)
		+ C \| \nabla' w_{hf, \tau} \|_{L^2(\Omega)},
		\end{multline}
		and we have also (using also \eqref{Est-K-tau-0-f-L-2*'} and the fact that $\|P_{hf,\tau}\|_{L^p} \leqslant C_p$  for $p=2d/(d-2)$)
		\begin{multline}
		\label{Carleman-Strip-w-hf-2*'-Improved-Preliminary0}
		\tau^{\frac{3}{4}+\frac{1}{2d}} \| w_{hf,\tau}\|_{L^{\frac{2d}{d-2}}(\Omega)} 
		\\
		\leq
		C\left(
		\tau^{- \frac{1}{4} + \frac{1}{2d}} \| f_2 \|_{L^2(\Omega)} 
		+ 
		\tau^{\frac{3}{4}+\frac{1}{2d}} \| f_{2*'} \|_{L^{\frac{2d}{d+2}}(\Omega)}  
		+
		\tau^{\frac{3}{4}+\frac{1}{2d}} \| F_2 + F_{2*'} \|_{L^2(\Omega)}
		\right)
		+ C \| \nabla' w_{hf, \tau} \|_{L^2(\Omega)}. 
		\end{multline}
		Accordingly, there exists $\tau_0 >0$ such that for all $\tau \geq \tau_0$, 
		\begin{multline} \label{Carleman-Strip-w-hf-2-Improved-Preliminary}
		\tau^{\frac{3}{2}} \| w_{hf,\tau}\|_{L^{2}(\Omega)} +\tau^{\frac{1}{2}} \| \nabla w_{hf,\tau}\|_{L^{2}(\Omega)}
		\\
		\leq
		C\left(
		\tau^{-\frac{1}{2}} \| f_2 \|_{L^2(\Omega)} 
		+ 
		\tau^{\frac{1}{2}} \| f_{2*'} \|_{L^{\frac{2d}{d+2}}(\Omega)} 
		+
		\tau^{\frac{1}{2}}  \| F_2 + F_{2*'} \|_{L^2(\Omega)}
		\right),
		\end{multline}
		and thus
		\begin{multline}
		\label{Carleman-Strip-w-hf-2*'-Improved-Preliminary}
		\tau^{\frac{3}{4}+\frac{1}{2d}} \| w_{hf,\tau}\|_{L^{\frac{2d}{d-2}}(\Omega)} 
		\leq
		C\left(
		\tau^{- \frac{1}{4} + \frac{1}{2d}} \| f_2 \|_{L^2(\Omega)} 
		+ 
		\tau^{\frac{3}{4}+\frac{1}{2d}} \| f_{2*'} \|_{L^{\frac{2d}{d+2}}(\Omega)}  
		+
		\tau^{\frac{3}{4}+\frac{1}{2d}} \| F_2 + F_{2*'} \|_{L^2(\Omega)}
		\right). 
		\end{multline}
		
		We then define the low frequency part $w_{lf, \tau} = (I- P_{hf, \tau})w$ of $w$: Using \eqref{parametrix2}, we get 
		\begin{multline*} 
		w_{lf, \tau} = K_{\tau, 0} ( ( I - P_{hf,\tau}) (f_2 + f_{2*'}) + \div'((I- P_{hf,\tau}) F_{2*'}') ) + K_{\tau,1}(( I - P_{hf,\tau}) F_2 \cdot e_1) 
		\\
		+
		( I - P_{hf,\tau}) K_{\tau,1}( F_{2*'} \cdot e_1)
		+ \sum_{j = 2}^d K_{\tau,j} (( I - P_{hf,\tau}) F_2 \cdot e_j)+R_\tau(  w_{lf, \tau} ).
		\end{multline*}
		Applying Bernstein's inequality  (see, for instance, \cite[Lemma 2.1.]{Bahouri-Chemin-Danchin-book}), we get that $\div' ((I- P_{hf,\tau}) F_{2*'}')$ belongs to $L^{\frac{2d}{d+2}}(\Omega)$ and 
		$$
		\| \div' ((I- P_{hf,\tau}) F_{2*'}') \|_{L^{\frac{2d}{d+2}}(\Omega)} \leq C \tau \| F_{2*'} \|_{L^{\frac{2d}{d+2}}(\Omega)}.
		$$	
		
		We then use Propositions \ref{Prop-OpNorm-Est-K0}, \ref{Prop-OpNorm-Est-K1}, \ref{Prop-OpNorm-Est-R-tau}, and \ref{Prop-OpNorm-Est-K1-Improved-LF} to obtain  
		\begin{multline}\label{Carleman-Strip-w-lf-2-Improved-Preliminary0}
		\tau^{\frac{3}{2}} \| w_{lf,\tau}\|_{L^2(\Omega)}
		+ 
		\tau^{\frac{1}{2}} \| \nabla w_{lf,\tau} \|_{L^2(\Omega)}
		\\
		\leq
		C\left(
		\| f_2 \|_{L^2(\Omega)} +
		\tau \| F_2 \|_{L^2(\Omega)}
		+ 
		\tau^{ \frac{3}{4}-\frac{1}{2d}} \left( \| f_{2*'} \|_{L^{\frac{2d}{d+2}}(\Omega)}+ \tau \| F_{2*'} \|_{L^{\frac{2d}{d+2}}(\Omega)}\right) 
		\right)
		+ C \| \nabla' w_{lf,\tau} \|_{L^2(\Omega)},
		\end{multline}
		and 
		\begin{multline} \label{Carleman-Strip-w-lf-2*'-Improved-Preliminary0}
		\tau^{\frac{3}{4}+\frac{1}{2d}} \| w_{lf,\tau}\|_{L^{\frac{2d}{d-2}}(\Omega)} 
		\\
		\leq
		C\left(
		\| f_2 \|_{L^2(\Omega)} 
		+
		\tau \| F_2 \|_{L^2(\Omega)}+ 
		\tau^{\frac{3}{4}+\frac{1}{2d}}\left( \| f_{2*'} \|_{L^{\frac{2d}{d+2}}(\Omega)}  + \tau \| F_{2*'} \|_{L^{\frac{2d}{d+2}}(\Omega)} \right)  \right)
		+ C \| \nabla' w_{lf,\tau} \|_{L^2(\Omega)}.
		\end{multline}
		As before, we can then absorb the term $ \| \nabla' w_{lf,\tau} \|_{L^2(\Omega)}$ in the right hand-side of \eqref{Carleman-Strip-w-lf-2-Improved-Preliminary0} 
		by choosing $\tau$ large enough: Taking $\tau_0 \geq 1$ larger if necessary, we get, for $\tau \geq \tau_0$,
		\begin{multline}\label{Carleman-Strip-w-lf-2-Improved-Preliminary}
		\tau^{\frac{3}{2}} \| w_{lf,\tau}\|_{L^2(\Omega)}
		+ 
		\tau^{\frac{1}{2}} \| \nabla w_{lf,\tau} \|_{L^2(\Omega)}
		\\
		\leq
		C\left(
		\| f_2 \|_{L^2(\Omega)} +
		\tau \| F_2 \|_{L^2(\Omega)}
		+ 
		\tau^{ \frac{3}{4}-\frac{1}{2d}} \left( \| f_{2*'} \|_{L^{\frac{2d}{d+2}}(\Omega)}+ \tau \| F_{2*'} \|_{L^{\frac{2d}{d+2}}(\Omega)}\right) 
		\right),
		\end{multline}
		and, consequently,  
		\begin{equation} \label{Carleman-Strip-w-lf-2*'-Improved-Preliminary}
		\tau^{\frac{3}{4}+\frac{1}{2d}} \| w_{lf,\tau}\|_{L^{\frac{2d}{d-2}}(\Omega)} 
		\leq
		C\left(
		\| f_2 \|_{L^2(\Omega)} 
		+
		\tau \| F_2 \|_{L^2(\Omega)}+ 
		\tau^{\frac{3}{4}+\frac{1}{2d}}\left( \| f_{2*'} \|_{L^{\frac{2d}{d+2}}(\Omega)}  + \tau \| F_{2*'} \|_{L^{\frac{2d}{d+2}}(\Omega)} \right)  \right).
		\end{equation}
		
		Combining estimates \eqref{Carleman-Strip-w-hf-2-Improved-Preliminary} and \eqref{Carleman-Strip-w-lf-2-Improved-Preliminary}, we deduce the  Carleman estimate \eqref{Carleman-Strip-w-1-Improved}. Similarly, combining estimates \eqref{Carleman-Strip-w-hf-2*'-Improved-Preliminary} and \eqref{Carleman-Strip-w-lf-2*'-Improved-Preliminary}, we deduce the Carleman estimate \eqref{Carleman-Strip-w-2-Improved} except for the localized estimates on the set $E$.
		
		To proceed with \eqref{Carleman-Strip-w-2-Improved}, we thus focus on the localized estimates within the subset $E \subset \Omega$. In order to do so, on one hand, at low frequencies, we use Hölder and Bernstein estimates:
		$$
		\tau \| w_{lf,\tau} \|_{L^2(E)} +\|  \nabla' w_{lf,\tau} \|_{L^2(E)}
		\leq 
		|E|^{\frac{1}{d}}\left(\tau \| w_{lf,\tau} \|_{L^{\frac{2d}{d-2}}(E)} + \| \nabla' w_{lf,\tau} \|_{L^{\frac{2d}{d-2}}(E)}\right)
		\leq 
		C |E|^{\frac{1}{d}} \tau \| w_{lf, \tau} \|_{L^{\frac{2d}{d-2}}(\Omega)} .
		$$
		Accordingly, multiplying the above estimate by $\tau^{\frac{3}{4}+\frac{1}{2d}}$ and using \eqref{Carleman-Strip-w-lf-2*'-Improved-Preliminary}, we deduce 
		\begin{multline*}
		\tau^{\frac{3}{4}+\frac{1}{2d}} \left(\tau \| w_{lf,\tau} \|_{L^2(E)} +\|  \nabla' w_{lf,\tau} \|_{L^2(E)}\right)
		\leq 
		C  |E|^{\frac{1}{d}} \tau 
		\left(
		\| f_2 \|_{L^2(\Omega)} 
		+
		\tau \| F_2 \|_{L^2(\Omega)}
		\right.
		\\
		\left.
		+ 
		\tau^{\frac{3}{4}+\frac{1}{2d}}( \| f_{2*'} \|_{L^{\frac{2d}{d+2}}(\Omega)}  + \tau \| F_{2*'} \|_{L^{\frac{2d}{d+2}}(\Omega)})
		\right). 
		\end{multline*}
		
		On the other hand, the estimates \eqref{Carleman-Strip-w-hf-2-Improved-Preliminary} give
		\begin{align*}
		& \tau^{\frac{3}{4}+\frac{1}{2d}}\left( \tau \| w_{hf,\tau} \|_{L^2(E)} +  \|  \nabla' w_{hf,\tau} \|_{L^2(E)}\right)
		\leq
		\tau^{\frac{3}{4}+\frac{1}{2d}}\left( \tau \| w_{hf,\tau} \|_{L^2(\Omega)} +  \|  \nabla' w_{hf,\tau} \|_{L^2(\Omega)}\right)
		\\
		& \qquad \leq
		C\left(
		\tau^{-\frac{1}{4}+\frac{1}{2d}}\| f_2 \|_{L^2(\Omega)} 
		+
		\tau^{\frac{3}{4}+\frac{1}{2d}}  \| F_2 + F_{2*'} \|_{L^2(\Omega)}+ 
		\tau^{\frac{3}{4}+\frac{1}{2d}}\| f_{2*'} \|_{L^{\frac{2d}{d+2}}(\Omega)} \right).
		\end{align*}	
		Finally, based on Propositions \ref{Prop-OpNorm-Est-K0}--\ref{Prop-OpNorm-Est-R-tau}, we can derive the following estimate for the term $\partial_1 w$:
		\begin{align*}
		\tau^{\frac{3}{4}+\frac{1}{2d}} \| \partial_1 w \|_{L^2(E)} 
		& \leq 
		\tau^{\frac{3}{4}+\frac{1}{2d}}  \| \partial_1 w \|_{L^2(\Omega)}
		\\ 
		& \leq 
		C \left( \tau^{-\frac{1}{4}+\frac{1}{2d}} \| f_2 \|_{L^2(\Omega)} +  \tau^{\frac{3}{4}+\frac{1}{2d}} \| F_2 +F_{2*'}\|_{L^2(\Omega)} 
		+ 
		\tau^{\frac{3}{4}+\frac{1}{2d}} \| f_{2*'}\|_{L^{\frac{2d}{d+2}}(\Omega)} \right) .	 
		\end{align*}
		By combining the last three estimates, we conclude 
		\begin{multline*}
		\tau^{\frac{3}{4}+\frac{1}{2d}}  \min\left\{\frac{1}{\tau |E|^{\frac{1}{d}}} ,1\right\}
		\left( \tau \| w \|_{L^2(E)} + \| \nabla w \|_{L^2(E)}\right) 
		\\
		\leq
		C\left(
		\| f_2 \|_{L^2(\Omega)} 
		+
		\tau \| F_2 \|_{L^2(\Omega)}
		+ 
		\tau^{\frac{3}{4}+\frac{1}{2d}} \left(
		\| f_{2*'} \|_{L^{\frac{2d}{d+2}}(\Omega)}  + \tau \| F_{2*'} \|_{L^{\frac{2d}{d+2}}(\Omega)} 
		\right)			  
		+ 
		\tau^{\frac{3}{4}+\frac{1}{2d}} \| F_{2*'} \|_{L^2(\Omega)} 
		\right),
		\end{multline*}
		and the proof of Theorem \ref{Thm-Carleman-Strip-2-improved} is thus completed.
	\end{proof}
	%
%
%
	%
	\subsection{Proof of Theorem \ref{Thm-Improved-Carleman-estimates}}\label{Subsec-Local+Glue}
	The goal of this section is to deduce Theorem \ref{Thm-Improved-Carleman-estimates} in the case of a general geometry from Theorem \ref{Thm-Carleman-Strip-2-improved} which was considering only the case of a strip. 
	
	In order to do so, we rely on two main steps: 
	\begin{itemize}
		\item A localization process, which allows through a suitable change of variables, to use Theorem \ref{Thm-Carleman-Strip-2-improved} to deduce a local Carleman estimate. 
		\item A gluing argument to patch these local estimates. 
	\end{itemize}
	This is the strategy used in \cite[Section 7]{Dehman-Ervedoza-Thabouti-2023}. We only sketch it below for the convenience of the reader since it does not involve any new difficulty compared to \cite{Dehman-Ervedoza-Thabouti-2023}. 
	%
	%
	
	\subsubsection{Local Carleman estimates}\label{Subsubsec-Local}

	For $\tau \geq 1$, we introduce 
	\begin{equation*}
	w = e^{ \tau \varphi}u, 
	\qquad 
	\tilde f_2 := e^{\tau \varphi} (f_2 - \tau \nabla \varphi \cdot F), 
	\qquad 
	\tilde f_{2*'} := e^{\tau \varphi} f_{2*'}, 
	\qquad
	\tilde F = \tilde F_2+\tilde F_{2*'}:= e^{\tau \varphi} F_2+e^{\tau \varphi} F_{2*'},  
	\end{equation*}
	%
	so that the function $u$ solves \eqref{Elliptic-Eq-general} if and only if $w$ solves  
	\begin{equation} 
	\label{Eq-Elliptic-w} 
	\Delta w - 2\tau \nabla \varphi \cdot \nabla w + \tau^2 |\nabla \varphi|^2 w - \tau \Delta \varphi w = \tilde f_2 + \tilde f_{2*'} + \div(\tilde F) \quad  \text{ in } \Omega, 
	\end{equation}
	We now introduce a local version of \eqref{Eq-Elliptic-w}. Namely, for $x_0 \in \overline\Omega\setminus \omega$, we introduce $\eta_{x_0}(x)$ a cut-off function defined by 
	\begin{equation}
	\label{Choice-Localization}
	\eta_{x_0}(x ) = \eta( \tau^{\frac{1}{3}} (x-x_0)), \qquad \qquad x \in \R^{d}, 
	\end{equation}
	where $\eta$ is a non-negative smooth radial function (in $\mathscr{C}^\infty_c(\R^d)$)  such that $\eta(\rho) = 1$ for $|\rho| \leq 1/2$ and vanishing outside the unit ball. We set 
	\begin{equation*}
	w_{x_0}( x) = \eta_{x_0}(x) w(x), \qquad \qquad x \in \Omega,
	\end{equation*}
	which solves 
	\begin{equation}
	\label{Eq-Elliptic-w-x0} 
	\Delta w_{x_0} - 2\tau \nabla \varphi \cdot \nabla w_{x_0} + \tau^2 |\nabla \varphi|^2 w_{x_0} =  f_{2,x_0} + f_{2*',x_0} + \div( F_{x_0})
	,   \text{ in } \Omega,  
	\end{equation}
	where 
	\begin{align} 
	& f_{2,x_0} =  \eta_{x_0} \tilde f_2 -\nabla \eta_{x_0} \cdot \tilde F_2 + \tau \Delta \varphi w_{x_0} +2  \nabla \eta_{x_0} \cdot \nabla w + \Delta \eta_{x_0} w - 2 \tau \nabla \varphi \cdot \nabla \eta_{x_0} w, \label{source-term-f-2-x-0} 
	\\
	& f_{2*',x_0} = \eta_{x_0} \tilde f_{2*'}-\nabla \eta_{x_0} \cdot \tilde F_{2*'}, 
	\qquad
	F_{x_0} = \eta_{x_0} \tilde F. \label{source-term-f-2-*'-x-0-F-x-0} 
	\end{align}
	
	Recall that $u$ is assumed to be compactly supported in some compact set $K$, such that $K \Subset \Omega$. Accordingly, there exists $\varepsilon >0$ such that $K_\varepsilon = \{x \in \R^d, \, d(x, K) \leq \varepsilon\}$ is a subset of $\Omega$. 
	
	We then derive the following lemma, whose proof is similar to the one in \cite[Lemma 7.1]{Dehman-Ervedoza-Thabouti-2023}.   
	\begin{lemma}
		\label{Lemma-Local-Carleman-within-subset-E}
		There exist constants $C>0$ and $\tau_0 \geq 1$ (depending only on $\alpha$, $\beta$, $\| \varphi \|_{C^3(\overline\Omega)}$, $K$, $\omega$ and $\Omega$) such that for all $\tau \geq \tau_0$, for all $x_0 \in K_\varepsilon \setminus \omega_0$, for all $(f_{2,x_0}, f_{2*',x_0},F_{x_0} = F_{2,x_0} + F_{2*',x_0})$ satisfying 
		\begin{equation*} 
		f_{2,x_0} \in L^2(\Omega), 
		\ \ 
		f_{2*',x_0} \in L^{\frac{2d}{d+2}}(\Omega),
		\ \ 
		F_{2,x_0} \in L^2(\Omega; \C^{d}), 
		\ \ 
		\hbox{and} 
		\ \  F_{2*',x_0} \in L^{\frac{2d}{d+2}}(\Omega; \C^{d}) \cap L^2(\Omega; \C^{d}),
		\end{equation*}
		and $w_{x_0}$ satisfying \eqref{Eq-Elliptic-w-x0} and supported in $B_{x_0}( \tau^{-\frac{1}{3}})$, we have
		\begin{multline}
		\label{Carleman-Local-w-x0-Improved-1}
		\tau^{\frac{3}{2}} \| w_{x_0}\|_{L^2(\Omega)}
		+ 
		\tau^{\frac{1}{2}} \| \nabla w_{x_0} \|_{L^2(\Omega)}
		\\
		\leq
		C\left(
		\| f_{2,x_0} \|_{L^2(\Omega)} +
		\tau \| F_{2,x_0} \|_{L^2(\Omega)}
		+ 
		\tau^{ \frac{3}{4}-\frac{1}{2d}} \left( \| f_{2*', x_0} \|_{L^{\frac{2d}{d+2}}(\Omega)}+ \tau \| F_{2*', x_0} \|_{L^{\frac{2d}{d+2}}(\Omega)}\right) 
		+ 
		\tau^{\frac{1}{2}}  \| F_{2*', x_0} \|_{L^2(\Omega)}  
		\right),
		\end{multline}
		and, for all measurable sets $E$ of $\Omega$,
		\begin{multline}
		\label{Carleman-Local-w-x0-Improved-2} 
		\tau^{\frac{3}{4}+\frac{1}{2d}} \| w_{x_0} \|_{L^{\frac{2d}{d-2}} (\Omega)}
		+ \tau^{\frac{3}{4}+\frac{1}{2d}}  \min\left\{\frac{1}{\tau |E|^{\frac{1}{d}}} ,1\right\}
		\left( \tau \| w_{x_0} \|_{L^2(E)} + \| \nabla w_{x_0} \|_{L^2(E)}\right) 
		+
		\tau^{\frac{3}{2}} \| w_{x_0}\|_{L^2(\Omega)}
		+ 
		\tau^{\frac{1}{2}} \| \nabla w_{x_0} \|_{L^2(\Omega)}
		\\
		\leq 
		C \left( 
		\|  f_{2,x_0} \|_{L^2(\Omega)}
		+
		\tau 	\|  F_{2,x_0} \|_{L^2(\Omega)}
		+\tau^{\frac{3}{4}+\frac{1}{2d}} \left(
		\| f_{2*',x_0} \|_{L^{\frac{2d}{d+2}}(\Omega)}  + \tau \| F_{2*',x_0} \|_{L^{\frac{2d}{d+2}}(\Omega)} + \| F_{2*',x_0} \|_{L^2(\Omega)}
		\right)
		\right).
		\end{multline}
	\end{lemma}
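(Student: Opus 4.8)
The plan is to deduce Lemma \ref{Lemma-Local-Carleman-within-subset-E} from the strip estimate of Theorem \ref{Thm-Carleman-Strip-2-improved} via the localization and change of variables of \cite[Section 7]{Dehman-Ervedoza-Thabouti-2023}; the only genuinely new point is that the source term $F_{2*'}$, carrying the $L^{\frac{2d}{d+2}}(\Omega)\cap L^2(\Omega)$ structure, must be transported through the argument together with the other terms. Fix $x_0\in K_\varepsilon\setminus\omega_0$; recall $w_{x_0}$ solves \eqref{Eq-Elliptic-w-x0} and is supported in $B_{x_0}(\tau^{-1/3})$. Since $\nabla\varphi$ does not vanish near $x_0$ by \eqref{CarlemanWeight-Cond1}, one builds (as in \cite[Section 7]{Dehman-Ervedoza-Thabouti-2023}) a change of variables $\kappa_{x_0}$, depending on $x_0$ and $\varphi$ with Jacobian and its first two derivatives bounded above and below by constants depending only on $\alpha$, $\beta$ and $\|\varphi\|_{C^3(\overline\Omega)}$, mapping $B_{x_0}(\tau^{-1/3})$ into a strip $(X_0,X_1)\times\R^{d-1}$ as in \eqref{Def-Strip} (the bound $\max\{|X_0|,|X_1|\}\leq 1$ being automatic since $\tau^{-1/3}\leq 1$), and under which the conjugated operator in \eqref{Eq-Elliptic-w-x0} becomes the model operator $\Delta-x_1\sum_{j=2}^d\lambda_j\partial_j^2-2\tau'\partial_1+(\tau')^2$ plus a residual operator $\mathcal R_\tau$ of order at most two. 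Here $\tau'$ is comparable to $\tau$ with ratio controlled by $\alpha$ and $\|\varphi\|_{C^3(\overline\Omega)}$, and the coefficients $(\lambda_j)_{j\geq 2}$ are read off from $\nabla\varphi(x_0)$ and $\mathrm{Hess}\,\varphi(x_0)$; conditions \eqref{CarlemanWeight-Cond1}--\eqref{CarlemanWeight-Cond2} are exactly what ensures that \eqref{Coercivity} and \eqref{Pseudo-Convexity-Strip} hold with $c_0$, $m_*$, $M_*$ uniform in $x_0$ and $\tau$.

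The next step is to check that $\mathcal R_\tau$ is a lower-order perturbation. Its coefficients originate from the cubic-and-higher Taylor remainder of $\varphi$ at $x_0$ and from the non-affine part of the transported Laplacian, hence are supported in a ball of radius $\lesssim\tau^{-1/3}$ and vanish at $x_0$ to the relevant order -- this is precisely why the cutoff scale $\tau^{-1/3}$ is used in \eqref{Choice-Localization}. Therefore $\mathcal R_\tau w_{x_0}$, together with the commutator terms already present in $f_{2,x_0}$, $f_{2*',x_0}$, $F_{x_0}$ (supported in $B_{x_0}(\tau^{-1/3})\setminus B_{x_0}(\tau^{-1/3}/2)$), contributes to the right-hand sides of \eqref{Carleman-Strip-w-1-Improved}--\eqref{Carleman-Strip-w-2-Improved} quantities bounded by $C\tau^{-1/3}$ times their left-hand sides -- using $\|D^2 v\|_{L^2}=\|\Delta v\|_{L^2}$ for compactly supported $v$ and \eqref{Eq-Elliptic-w-x0} to control the second derivatives of $w_{x_0}$ entering the order-two part of $\mathcal R_\tau$ -- and are absorbed for $\tau\geq\tau_0$ large, exactly as in \cite[Lemma 7.1]{Dehman-Ervedoza-Thabouti-2023}. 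Since $\kappa_{x_0}$ has uniformly bounded Jacobian it preserves $L^2(\Omega)$ and $L^{\frac{2d}{d+2}}(\Omega)$ norms up to constants, and the divergence form $\div(F_2+F_{2*'})$ is preserved modulo extra $L^2$ (resp.\ $L^{\frac{2d}{d+2}}$) terms that can be reassigned to $f_{2,x_0}$ (resp.\ $f_{2*',x_0}$); so the splitting \eqref{Reg-Source-Terms-Strip-Improved} required by Theorem \ref{Thm-Carleman-Strip-2-improved} is respected, with $F_{2*',x_0}$ still measured in $L^{\frac{2d}{d+2}}(\Omega)\cap L^2(\Omega)$.

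Applying Theorem \ref{Thm-Carleman-Strip-2-improved} to the transformed function and transporting back yields \eqref{Carleman-Local-w-x0-Improved-1} and the estimates of $\tau^{\frac34+\frac1{2d}}\|w_{x_0}\|_{L^{\frac{2d}{d-2}}(\Omega)}$ and of the localized $E$-term in \eqref{Carleman-Local-w-x0-Improved-2}, the powers of $\tau$ in \eqref{Carleman-Strip-w-1-Improved}--\eqref{Carleman-Strip-w-2-Improved} matching those in \eqref{Carleman-Local-w-x0-Improved-1}--\eqref{Carleman-Local-w-x0-Improved-2} since $\tau'$ is comparable to $\tau$. Finally, the extra terms $\tau^{3/2}\|w_{x_0}\|_{L^2(\Omega)}+\tau^{1/2}\|\nabla w_{x_0}\|_{L^2(\Omega)}$ on the left-hand side of \eqref{Carleman-Local-w-x0-Improved-2} are obtained by adding \eqref{Carleman-Local-w-x0-Improved-1}, whose right-hand side is, for $\tau\geq1$, dominated by that of \eqref{Carleman-Local-w-x0-Improved-2} (since $\tfrac12<\tfrac34+\tfrac1{2d}$ and $\tfrac34-\tfrac1{2d}<\tfrac34+\tfrac1{2d}$). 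The main obstacle is the bookkeeping of the order-two part of $\mathcal R_\tau$, i.e.\ making sure it is genuinely absorbed and not merely estimated, together with the simultaneous control of $F_{2*'}$ in both $L^{\frac{2d}{d+2}}$ and $L^2$ through $\kappa_{x_0}$; both are handled by the $\tau^{-1/3}$-smallness of the support, exactly as in \cite{Dehman-Ervedoza-Thabouti-2023}, to which we refer for the full details.
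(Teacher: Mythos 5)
Your overall architecture is the paper's: change variables near $x_0$ to reduce \eqref{Eq-Elliptic-w-x0} to the model operator in a strip, invoke Theorem \ref{Thm-Carleman-Strip-2-improved}, absorb the residual using the $\tau^{-1/3}$ size of the support, and undo the change of variables. But the specific mechanism you propose for the second-order part of the residual is a genuine gap. You claim to control it ``using $\|D^2 v\|_{L^2}=\|\Delta v\|_{L^2}$ for compactly supported $v$ and \eqref{Eq-Elliptic-w-x0} to control the second derivatives of $w_{x_0}$.'' This cannot work here: the right-hand side of \eqref{Eq-Elliptic-w-x0} contains $\div(F_{x_0})$ with $F_{x_0}$ merely in $L^2(\Omega;\C^d)$ and $f_{2*',x_0}$ merely in $L^{\frac{2d}{d+2}}(\Omega)$, so $\Delta w_{x_0}$ is not in $L^2$ and $\|D^2 w_{x_0}\|_{L^2}$ is not controlled by the data. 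Any estimate routed through second derivatives of $w_{x_0}$ in $L^2$ is therefore unavailable in exactly the regime (divergence-form sources) that this lemma is designed to cover.

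The paper avoids this entirely by choosing the change of variables $y_1=\varphi(x)-\varphi(x_0)$, $y_j=L_j\cdot(x-x_0)+\tfrac12 A_j(x-x_0)\cdot(x-x_0)$ with the matrices $A_j$ in \eqref{Choices-A-j} tuned so that the transported principal symbol agrees with that of the model operator to first order in $y$. The remaining second-order discrepancy then has coefficients of size $O(|y|^2)=O(\tau^{-2/3})$ on the support and is rewritten in divergence form: it is absorbed into an extra source $\check F_{a}$ with $\|\check F_a\|_{L^2(\Omega_y)}\leq C\tau^{-2/3}\|\nabla_y\check w\|_{L^2(\Omega_y)}$, plus first-order remainders $\check f_{2,a},\check f_{2,b},\check f_{2,c}$ all bounded by (powers of $\tau$ at most $\tau^{1/3}$ times) $\|\nabla_y\check w\|_{L^2(\Omega_y)}$. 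Every residual is thus estimated by $\tau^{1/3}\|\nabla_y\check w\|_{L^2}$ on the right-hand side of \eqref{Carleman-Strip-w-1-Improved}--\eqref{Carleman-Strip-w-2-Improved} and absorbed by $\tau^{1/2}\|\nabla\check w\|_{L^2}$ on the left for $\tau\geq\tau_0$; no $D^2\check w$ ever appears. Two further, more minor, discrepancies: the paper keeps the Carleman parameter equal to $\tau$ exactly (the choice $y_1=\varphi(x)-\varphi(x_0)$ makes $e^{\tau\varphi}$ into $e^{\tau y_1}$ up to a constant, so no modified $\tau'$ is needed), and the commutator terms $2\nabla\eta_{x_0}\cdot\nabla w$ etc.\ are not absorbed at this stage --- they are simply part of the data $f_{2,x_0}$ in \eqref{source-term-f-2-x-0} and are only dealt with in the subsequent gluing argument.
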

	
	\begin{proof}[Sketch of the proof]
		The main idea is to build a suitable change of coordinates which allows to rewrite the equation \eqref{Eq-Elliptic-w-x0} under the form \eqref{Elliptic-Strip-w}, up to some lower order terms which can be handled using the localization properties of $w_{x_0}$. 
		
		Namely, let $x_0 \in K_\varepsilon \setminus \omega_0$, and introduce $L_1 \in \R^d$ and $A_1 \in \R^{d\times d}$ as follows:
		\begin{equation*}
		L_1 = \nabla \varphi(x_0) \in \R^d, \quad A_1 =  {\rm Hess\,} \varphi(x_0)  \in \R^{d\times d}.
		\end{equation*}

		The bilinear form 
		$$
		\xi \in \R^d \mapsto ({\rm Hess\,} \varphi(x_0)) \xi \cdot \xi 
		$$		
		is symmetric on $\R^d$ and on $\text{Span\,} \{L_1\}^\perp$. Accordingly, there exists a family of orthogonal vectors $(L_j)_{j \in \{2, \cdots, d\} } $ of $\text{Span\,} \{L_1\}^\perp$ which diagonalizes this form, that we normalize so that for all $j \in \{2, \cdots, d\}$, $|L_j | = |L_1|$. Since the family $(L_j)_{j \in \{2, \cdots, d\} } $ of $\text{Span\,} \{L_1\}^\perp$ diagonalizes the form $\xi \mapsto  ({\rm Hess\,} \varphi(x_0)) \xi \cdot \xi$ in $\text{Span\,} \{L_1\}^\perp$, for all $j \in \{2, \cdots, d\}$, there exist $\alpha_j $ and $\mu_j$ in $\R$ such that 
		$$
		({\rm Hess\,} \varphi(x_0))  L_j = \mu_j L_j + \alpha_j L_1, \qquad \qquad  j \in \{2, \cdots, d\}.
		$$
		Note that by symmetry of ${\rm Hess\,} \varphi(x_0)$, we then necessarily have 
		$$
		({\rm Hess\,} \varphi(x_0))  L_1 = \mu_1 L_1 + \sum_{ k \geq 2} \alpha_k L_k, 
		$$
		where 
		$$
		\mu_1 = \frac{1}{|L_1|^2}  ({\rm Hess\,} \varphi(x_0)) L_1 \cdot  L_1
		= 
		\frac{1}{|\nabla \varphi(x_0)|^2} ({\rm Hess\,} \varphi(x_0)) \nabla \varphi(x_0) \cdot  \nabla \varphi(x_0).
		$$
		For $j \in \{2, \cdots, d\}$, we then introduce the self-adjoint matrix $A_j \in \R^{d \times d}$ defined by 
		\begin{equation}
		\label{Choices-A-j}
		\left\{
		\begin{array}{ll}
		& \ds A_j L_1 = - \alpha_j L_1 - \mu_j L_j  ,
		\\
		& \ds A_j L_k =  \alpha_k L_j - \alpha_j L_k, \quad \text{ if } k \in \{2, \cdots, d\}\setminus \{ j\},
		\\ 
		& \ds A_j L_j   = - \mu_j L_1 + \sum_{k \geq 2} \alpha_k L_k.  
		\end{array}
		\right.
		\end{equation}
		(It is easy to check that each matrix $A_j$ defined that way is indeed symmetric.)

		We then define the following change of coordinates for $x$ in a neighbourhood of $x_0$:
		\begin{align*}
		& y_1(x)  =  \varphi(x)- \varphi(x_0), 
		\\
		&    
		y_j(x) =    L_j \cdot  (x-x_0)  + \frac{1}{2}  A_j (x-x_0) \cdot (x-x_0)\qquad  \text{ for } \, j \in \{2, \cdots, d\}. 
		\end{align*}
		
		By construction, there exists a neighbourhood, whose size depends on the $C^2$ norm of $\varphi$ only, such that $x \mapsto y(x)$ is a local diffeomorphism between a neighbourhood $\mathcal{V}$ of $x_0$ in $\overline\Omega\setminus \omega$ and a neighbourhood of $0$, that we call $\Omega_y$.
		
		For $\tau$ large enough, we can ensure that the ball of center $x_0$ and radius $\tau^{-\frac{1}{3}}$, when intersected with $\overline\Omega$, is included in a set on which $x \mapsto y(x)$ is a diffeomorphism, and its image is included in a ball $B_0( C \tau^{-\frac{1}{3}})$.

		Therefore, for $w_{x_0}$ solving \eqref{Eq-Elliptic-w-x0}, 
		we set 
		\begin{equation*}
		\check w(y) = w_{x_0}(x) \quad \text{ for } \quad y = y(x),
		\end{equation*}
		Tedious computations, detailed in \cite[Section 7.2]{Dehman-Ervedoza-Thabouti-2023}, show that 
		$\check w$ then satisfies
		\begin{equation*}
		\label{Elliptic-Strip-check w}
		\Delta_y \check w-y_{1} \sum_{j=2}^{d} \lambda_{j} \partial_{y_j}^{2}  \check w
		- 2 \tau \partial_{y_1} \check w + \tau^2 \check w
		= \check f_2 + \check f_{2*'} + 	\hbox{div}_y\, \check F \qquad \text { in } (Y_0, Y_1) \times \R^{d-1},  
		\end{equation*}
		where the coefficients $(\lambda_j)_{j \in \{2, \cdots, d\}}$ are given by 
		\begin{align}
		\lambda_j &= \frac{2}{|L_1|^2} \left( A_1 L_1 \cdot L_1 + A_1 L_j \cdot L_j \right)
		\notag
		\\
		\label{Def-lambda-j}
		&= \frac{2}{| \nabla \varphi(x_0)|^2}  \left( 
		({\rm Hess\,} \varphi(x_0)) \nabla \varphi(x_0) \cdot \nabla \varphi(x_0) +   ({\rm Hess\,} \varphi(x_0))  L_j  \cdot L_j \right), 
		\end{align}
		the source terms are 
		\begin{align*}
		&
		\check f_{2}(y ) =  \frac{1}{|\nabla \varphi(x(y))|^2} f_{2,x_0}(x(y)) - \sum_{j,k} \partial_{y_j} \rho_{k,j} F_{x_0,2,k}(x(y))   + \check f_{2,a}(y) + \check f_{2,b}(y) + \check f_{2,c} (y),
		\\
		&\check f_{2*'}(y) =  \frac{1}{|\nabla \varphi(x(y))|^2}  f_{2*',x_0}(x(y)) - \sum_{j,k} \partial_{y_j} \rho_{k,j} F_{x_0,2*',k}(x(y)) , 
		\\
		& \check F_j (y) = \underbrace{\sum_{k = 1}^d \rho_{k,j}(y) F_{x_0,2,k}(x(y))+ \check F_{j,a}(y)}_{=:\check F_{j,2}}+ \underbrace{\sum_{k = 1}^d \rho_{k,j}(y) F_{x_0,2*',k}(x(y))}_{=:\check F_{j,2*'}}, \qquad \qquad j \in \{1, \cdots, d\}, 
		\end{align*}
		in which $\rho$ is defined as 
		$$
		\rho_{j,k}(y) = \frac{\partial_{x_k} y_j(x(y))}{|\nabla \varphi(x(y))|^2}, 
		$$
		and $ \check f_{2,a}$, $\check f_{2,b}$,  $\check f_{2,c}$ and $\check F_a$ satisfy, due to the localization of $w$ in $B_{x_0}(\tau^{- \frac{1}{3}})$ (equivalently, of $\check w$ in  $B_{0}(C\tau^{- \frac{1}{3}})$),
		\begin{align*}
		& \| \check f_{2,a} \|_{L^2(\Omega_y)} \leq C \tau^{-\frac{1}{3}} \| \nabla_y \check w\|_{L^2(\Omega_y)}, 
		\quad
		\| \check f_{2,b} \|_{L^2(\Omega_y)} \leq C \tau^{\frac{1}{3}} \| \nabla_y \check w\|_{L^2(\Omega_y)}, 
		\quad
		\| \check f_{2,c} \|_{L^2(\Omega_y)} \leq C \| \nabla_y \check w\|_{L^2(\Omega_y)},
		\\
		&
		\| \check F_a \|_{L^2(\Omega_y)} \leq C \tau^{-\frac{2}{3}} \| \nabla_y \check w\|_{L^2(\Omega_y)}. 
		\end{align*}

		Now, the condition \eqref{CarlemanWeight-Cond2} implies that all the $\lambda_j$ in \eqref{Def-lambda-j} are positive, i.e. that the condition \eqref{Pseudo-Convexity-Strip} is satisfied. Accordingly, the Carleman estimates in Theorem \ref{Thm-Carleman-Strip-2-improved} apply, and we can readily deduce the estimates in Lemma \ref{Lemma-Local-Carleman-within-subset-E}. Let us focus for instance on the proof of the Carleman estimate \eqref{Carleman-Local-w-x0-Improved-2} (the proof of the Carleman estimate \eqref{Carleman-Local-w-x0-Improved-1} is completely similar and in fact easier and left to the reader).
		
		For $\tau \geq \tau_0$ and a measurable set $E_y$ of $\Omega_y $, we have from \eqref{Carleman-Strip-w-1-Improved} and \eqref{Carleman-Strip-w-2-Improved} that 
		\begin{multline*} 
		\tau^{\frac{3}{4}+\frac{1}{2d}} \|\check  w\|_{L^{\frac{2d}{d-2}}(\Omega_y)} 
		+ \tau^{\frac{3}{4}+\frac{1}{2d}}  \min\left\{\frac{1}{\tau |E_y|^{\frac{1}{d}}} ,1\right\}
		\left( \tau \| \check w  \|_{L^2(E_y)} + \| \nabla \check w   \|_{L^2(E_y)}\right)
		+
		\tau^{\frac{3}{2}} \|  \check w \|_{L^2(\Omega_y)}
		+
		\tau^{\frac{1}{2}} \| \nabla \check w \|_{L^2(\Omega_y)}
		\\
		\leq
		C\left(
		\| \check f_2 \|_{L^2(\Omega_y)} 
		+  
		\tau \| \check F_2 \|_{L^2(\Omega_y)}
		+\tau^{\frac{3}{4}+\frac{1}{2d}} \left(
		\| \check f_{2*'} \|_{L^{\frac{2d}{d+2}}(\Omega_y)}  + \tau \| \check F_{2*'} \|_{L^{\frac{2d}{d+2}}(\Omega_y)} +  \| \check F_{2*'} \|_{L^2(\Omega_y)} \right)  
		\right).
		\end{multline*} 
		We then simply remark that, from the expression of $\check f_2, \check f_{2*'}, $ and $ \check F= \check F_2+\check F_{2*'},$ 
		\begin{multline*}
		\| \check f_2 \|_{L^2(\Omega_y)} 
		+  
		\tau \| \check F_2 \|_{L^2(\Omega_y)}
		+\tau^{\frac{3}{4}+\frac{1}{2d}} \left(
		\| \check f_{2*'} \|_{L^{\frac{2d}{d+2}}(\Omega_y)}  + \tau \| \check F_{2*'} \|_{L^{\frac{2d}{d+2}}(\Omega_y)} +  \| \check F_{2*'} \|_{L^2(\Omega_y)} \right) 
		\\
		\leq 
		C 
		\left( \|   f_{2,x_0} \|_{L^2(\Omega)} 
		+  
		\tau \| F_{2,x_0} \|_{L^2(\Omega)}
		+\tau^{\frac{3}{4}+\frac{1}{2d}} \left(
		\|  f_{2*',x_0} \|_{L^{\frac{2d}{d+2}}(\Omega)}  + \tau \|   F_{2*',x_0} \|_{L^{\frac{2d}{d+2}}(\Omega)} +  \|   F_{2*',x_0} \|_{L^2(\Omega)} \right)  
		\right. 
		\\
		\left. 
		+
		\tau^{\frac{1}{3}} \| \nabla_y \check w\|_{L^2(\Omega_y)}\right), 
		\end{multline*}
		Accordingly, taking $\tau_0\geq 1$ larger if necessary, we get for all $\tau \geq \tau_0$, 
		\begin{multline*} 
		\tau^{\frac{3}{4}+\frac{1}{2d}} \|\check w\|_{L^{\frac{2d}{d-2}}(\Omega_y)} 
		+ \tau^{\frac{3}{4}+\frac{1}{2d}}  \min\left\{\frac{1}{\tau |E_y|^{\frac{1}{d}}} ,1\right\}
		\left( \tau \| \check w  \|_{L^2(E_y)} + \| \nabla \check w   \|_{L^2(E_y)}\right)
		+
		\tau^{\frac{3}{2}} \|  \check w \|_{L^2(\Omega_y)}
		+
		\tau^{\frac{1}{2}} \| \nabla \check w \|_{L^2(\Omega_y)} 
		\\
		\leq C 
		\left( \|   f_{2,x_0} \|_{L^2(\Omega)} 
		+  
		\tau \| F_{2,x_0} \|_{L^2(\Omega)}
		+\tau^{\frac{3}{4}+\frac{1}{2d}} \left(
		\|  f_{2*',x_0} \|_{L^{\frac{2d}{d+2}}(\Omega)}  + \tau \|   F_{2*',x_0} \|_{L^{\frac{2d}{d+2}}(\Omega)} +  \|   F_{2*',x_0} \|_{L^2(\Omega)} \right)  
		\right). 
		\end{multline*}
		Undoing the change of variables on the left-hand side, we easily deduce the estimates \eqref{Carleman-Local-w-x0-Improved-2} for $\tau \geq \tau_0$ and a measurable set $E$ of $\Omega$.
		
		The fact that the constants above do not depend on $x_0 \in K_\varepsilon \setminus \omega_0$ can be tracked in the above proof: it comes from uniformity properties of the diffeomorphism $x \mapsto y$, and relies heavily on the uniform bounds \eqref{CarlemanWeight-Cond1}--\eqref{CarlemanWeight-Cond2}, on the fact that $\varphi \in C^3(\overline\Omega)$, and that the constants in Theorem \ref{Thm-Carleman-Strip-2-improved} depend only on $c_0$, $m_*$ and $M_*$  in \eqref{Coercivity}, and \eqref{Pseudo-Convexity-Strip} for $X_0 < 0 < X_1$ with $|X_0|, |X_1| \leq 1$.  This ends the proof of Lemma \ref{Lemma-Local-Carleman-within-subset-E}.
	\end{proof}
	%
	%
	%
	%
	\subsubsection{A gluing argument}
	
	Here again, the proof closely follows the one in \cite[Section 7.3]{Dehman-Ervedoza-Thabouti-2023}, and we focus on the proof of the estimate \eqref{Carleman-General-2-Improved}, as the estimate \eqref{Carleman-General-1-Improved} can be done similarly and is thus left to the reader. 
	
	We start from \eqref{Carleman-Local-w-x0-Improved-2}:  There exist constants $C>0$ and $\tau_0 \geq 1$ such that for all $x_0 \in K_\varepsilon \setminus \omega_0$ and $\tau \geq \tau_0$ and $w_{x_0}$ solution of \eqref{Eq-Elliptic-w-x0},
	\begin{multline} 
	\label{First-Step-Gluing}
	\tau^{\frac{3}{2}+\frac{1}{d}} \| w_{x_0} \|_{L^{\frac{2d}{d-2}} (\Omega)}^2
	+ \tau^{\frac{3}{2}+\frac{1}{d}}  \min\left\{\frac{1}{\tau |E|^{\frac{1}{d}}} ,1\right\}^2
	\left( \tau^2 \| w_{x_0} \|_{L^2(E)}^2 + \| \nabla w_{x_0} \|_{L^2(E)}^2 \right) 
	+
	\tau^{3} \| w_{x_0}\|_{L^2(\Omega)}^2
	+ 
	\tau \| \nabla w_{x_0} \|_{L^2(\Omega)}^2
	\\
	\leq
	C\left(
	\| f_{2,x_0} \|^2_{L^2(\Omega)}
	+
	\tau ^2\| F_{2,x_0} \|^2_{L^2(\Omega)}
	+ 
	\tau^{\frac{3}{2}+\frac{1}{d}} \left(
	\| f_{2*',x_0} \|^2_{L^{\frac{2d}{d+2}}(\Omega)}  + \tau^2 \| F_{2*',x_0} \|^2_{L^{\frac{2d}{d+2}}(\Omega)} +  \| F_{2*',x_0} \|^2_{L^2(\Omega)} \right)  
	\right).
	\end{multline}
	Using the explicit expressions of the source terms \eqref{source-term-f-2-x-0} and \eqref{source-term-f-2-*'-x-0-F-x-0}, we obtain 
	\begin{multline*}
	\text{Right-Hand Side of }\eqref{First-Step-Gluing}  
	\leq 
	C
	\left(\|\eta_{x_0} \tilde f_{2} \|_{L^2(\Omega)} ^2
	+ \tau^2  \|\eta_{x_0} \tilde F_{2} \|_{L^2(\Omega)} ^2
	\right. 
	\\ 
	\left.
	+
	\tau^{\frac{3}{2}+\frac{1}{d}}\left( \| \eta_{x_0} \tilde f_{2*'} \|_{L^{\frac{2d}{d+2}}(\Omega)}^2 
	+
	\tau^2 \| \eta_{x_0} \tilde F_{2*'} \|_{L^\frac{2d}{d+2}(\Omega)}^2+\| \eta_{x_0} \tilde F_{2*'} \|_{L^2(\Omega)}^2\right)
	\right)
	\\
	+
	C
	\left(\| \nabla \eta_{x_0} \cdot \tilde F_2 \|_{L^2(\Omega)}^2
	+
	\tau^2 \| w_{x_0} \|_{L^2(\Omega)}^2
	+ 
	\| \nabla \eta_{x_0} \cdot \nabla w \|_{L^2(\Omega)}^2
	+ 
	\| \Delta \eta_{x_0}  w \|_{L^2(\Omega)}^2
	+ 
	\tau^2 \| |\nabla \eta_{x_0}| w \|_{L^2(\Omega)}^2  
	\right.
	\\ 
	\left. 
	+ \tau ^{\frac{3}{2}+\frac{1}{d}}\norm{\nabla\eta_{x_0} \tilde F_{2*'}}_{L^{\frac{2d}{d+2}}(\Omega)}^2 
	\right).
	\end{multline*}
	By taking $\tau_0\geq 1$ larger if necessary (which can be done uniformly in $x_0 \in K_\varepsilon \setminus\omega_0$), we can absorb the term $\tau^2 \| w_{x_0} \|_{L^2(\Omega)}^2$ by the left hand side of \eqref{First-Step-Gluing}. Then, by integrating in $x_0$ on $K_\varepsilon \setminus \omega_0$, using Fubini's identity for the Hilbertian norms, we get 
	\begin{multline*}
	\tau^{\frac{3}{2}+\frac{1}{d}}  \left\| \| \eta_{x_0}  w \|_{L_x^{\frac{2d}{d-2}}(\Omega)}\right\|_{L^2_{x_0}(K_\varepsilon \setminus \omega_0)}^2 
	+
	\tau^{\frac{3}{2}+\frac{1}{d}}  \min\left\{\frac{1}{\tau^2 |E|^{\frac{2}{d}}} ,1\right\} \left(	\tau^2 \| \rho_0^{\frac{1}{2}} w  \|_{L^2(E)} ^2
	+ 
	\| \rho_0^{\frac{1}{2}} \nabla w  \|_{L^2(E)} ^2 \right)		
	\\ 
	+\tau^3  \| \rho_0^{\frac{1}{2}} w  \|_{L^2(\Omega)} ^2
	+ 
	\tau\| \rho_0^{\frac{1}{2}} \nabla w  \|_{L^2(\Omega)} ^2 
	\\
	\leq
	C 
	\left(
	\| \rho_0^{\frac{1}{2}} \tilde f_2\|^2_{L^2(\Omega)}
	+
	\| (\tau^2 \rho_0 + \rho_{r,1})^{\frac{1}{2}} \tilde F_2 \|^2_{L^2(\Omega)} 
	+ \tau ^{\frac{3}{2}+ \frac{1}{d}} 
	\left\|  \| \eta_{x_0} \tilde F_{2*'} \|_{L^{\frac{2d}{d+2}}(\Omega)}\right\|_{L^2_{x_0}(K_\varepsilon \setminus \omega_0)}^2
	\right)
	\\ 
	+
	C 
	\tau^{\frac{3}{2}+\frac{1}{d}}\left( 
	\left\|   \| \eta_{x_0} \tilde f_{2*'} \|_{L^{\frac{2d}{d+2}}_x(\Omega)}\right\|_{L^2_{x_0}(K_\varepsilon \setminus \omega_0)}^2
	+ \tau^2
	\left\|   \| \eta_{x_0} \tilde F_{2*'} \|_{L^{\frac{2d}{d+2}}_x(\Omega)}\right\|_{L^2_{x_0}(K_\varepsilon \setminus \omega_0)}^2
	+
	\| \rho_{0}^{\frac{1}{2}} \tilde F_{2*'} \|_{L^{2}(\Omega)}^2
	\right)
	\\
	\qquad \qquad \quad
	+
	C 
	\left( 
	\| (\rho_{r,2} + \tau^2 \rho_{r,1} ) ^{\frac{1}{2}} w \|^2 _{L^2(\Omega)}
	+ 
	\| \rho_{r,1}^{\frac{1}{2}} \nabla w\|^2_{L^{2}(\Omega)}
	\right), 
	\end{multline*}
	where the weights $\rho_0$, $\rho_{r,i}$ are defined as follows:
	\begin{equation*}
	\rho_0(x)  = \int_{K_\varepsilon \setminus \omega_0} |\eta_{x_0}(x)|^2 \, dx_0, \qquad 
	\rho_{r,1}(x)  = \int_{K_\varepsilon \setminus \omega_0} |\nabla \eta_{x_0}(x)|^2 \, dx_0,
	\qquad
	\rho_{r,2}(x)  = \int_{K_\varepsilon \setminus \omega_0} |\Delta \eta_{x_0}(x)|^2 \, dx_0.
	\end{equation*}
	Minkowski’s integral inequality (\cite[p.271]{stein2016singular}) for the non-Hilbertian norms then gives:
	\begin{multline*}
	\tau^{\frac{3}{2}+\frac{1}{d}}  \|  \rho_0^{\frac{1}{2}} w \|_{L^{\frac{2d}{d-2}}(\Omega)}^2  
	+
	\tau^{\frac{3}{2}+\frac{1}{d}}  \min\left\{\frac{1}{\tau^2 |E|^{\frac{2}{d}}} ,1\right\} \left(	\tau^2 \| \rho_0^{\frac{1}{2}} w  \|_{L^2(E)} ^2
	+ 
	\| \rho_0^{\frac{1}{2}} \nabla w  \|_{L^2(E)} ^2 \right)		
	\\ 
	+\tau^3  \| \rho_0^{\frac{1}{2}} w  \|_{L^2(\Omega)} ^2
	+ 
	\tau\| \rho_0^{\frac{1}{2}} \nabla w  \|_{L^2(\Omega)} ^2 
	\\
	\leq
	C 
	\left(
	\| \rho_0^{\frac{1}{2}} \tilde f_2\|^2_{L^2(\Omega)}
	+
	\| (\tau^2 \rho_0 + \rho_{r,1})^{\frac{1}{2}} \tilde F_2 \|^2_{L^2(\Omega)} + \tau ^{\frac{3}{2}+ \frac{1}{d}}  \| \rho_{r,1}^{\frac{1}{2}} \tilde F_{2*'} \|_{L^{\frac{2d}{d+2}}(\Omega)}^2
	\right)
	\\ 
	+
	C 
	\tau^{\frac{3}{2}+\frac{1}{d}}\left( 
	\| \rho_{0}^{\frac{1}{2}} \tilde f_{2*'} \|_{L^{\frac{2d}{d+2}}(\Omega)}^2 
	+ \tau^2
	\| \rho_{0}^{\frac{1}{2}} \tilde F_{2*'} \|_{L^{\frac{2d}{d+2}}(\Omega)}^2
	+
	\| \rho_{0}^{\frac{1}{2}} \tilde F_{2*'} \|_{L^{2}(\Omega)}^2
	\right)
	\\
	\qquad \qquad \quad
	+
	C 
	\left( 
	\| (\rho_{r,2} + \tau^2 \rho_{r,1} ) ^{\frac{1}{2}} w \|^2 _{L^2(\Omega)}
	+ 
	\| \rho_{r,1}^{\frac{1}{2}} \nabla w\|^2_{L^{2}(\Omega)}
	\right), 
	\end{multline*}
	It is  easy to check from the choice \eqref{Choice-Localization} that, for $\tau$ sufficiently large, 
	\begin{align*}
	&  \rho_0(x)  = \tau^{-\frac{d}{3}} \norm{\eta}_{L^2}^2, \quad \forall x \in  
	K \setminus \omega ,
	\qquad \qquad
	\rho_{0}(x) \leq C \tau^{- \frac{d}{3}},  \quad \forall x \in \Omega, 
	\\
	&
	\rho_{r,1}(x) \leq C \tau^{\frac{2}{3} - \frac{d}{3}}, \quad  \forall x \in \Omega,	 
	\qquad \text{ and } \hspace{6ex}
	\rho_{r,2}(x)\leq C \tau^{\frac{4}{3} - \frac{d}{3}},  \hspace{5mm} \forall x \in \Omega. 
	\end{align*}
	Thus, for $\tau$ large enough, 
	\begin{multline*}
	\tau^{\frac{3}{2}+\frac{1}{d}}  \|    w \|_{L^{\frac{2d}{d-2}}(\Omega \setminus \omega)}^2  
	+
	\tau^{\frac{3}{2}+\frac{1}{d}}  \min\left\{\frac{1}{\tau^2 |E|^{\frac{2}{d}}} ,1\right\} \left(	\tau^2 \|  w  \|_{L^2(E\cap (K\setminus \omega))} ^2
	+ 
	\|  \nabla w  \|_{L^2(E\cap (K\setminus \omega))} ^2 \right)
	\\ 
	+\tau^3  \|   w  \|_{L^2(\Omega \setminus \omega)} ^2
	+ 
	\tau\|   \nabla w  \|_{L^2(\Omega \setminus \omega)} ^2 
	\\
	\leq
	C 
	\left(
	\| \tilde f_2\|^2_{L^2(\Omega)}
	+
	\tau^{2}\|   \tilde F_2 \|^2_{L^2(\Omega)} 
	+
	\tau^{\frac{3}{2}+\frac{1}{d}}\left( 
	\|   \tilde f_{2*'} \|_{L^{\frac{2d}{d+2}}(\Omega)}^2 
	+ \tau^2
	\|   \tilde F_{2*'} \|_{L^{\frac{2d}{d+2}}(\Omega)}^2
	+
	\|   \tilde F_{2*'} \|_{L^{2}(\Omega)}^2
	\right)  
	\right) 
	\\
	\qquad \qquad \quad
	+
	C 
	\left( \tau^{\frac{8}{3}}
	\|    w \|^2 _{L^2(\omega)}
	+ \tau^{\frac{2}{3}}
	\|   \nabla w\|^2_{L^{2}(\omega)}
	\right), 
	\end{multline*}
	We then add 
	\begin{multline*}
	\tau^{\frac{3}{2}+\frac{1}{d}}  \|    w \|_{L^{\frac{2d}{d-2}}( \omega)}^2   
	+
	\tau^{\frac{3}{2}+\frac{1}{d}}  \min\left\{\frac{1}{\tau^2 |E|^{\frac{2}{d}}} ,1\right\} \left(	\tau^2 \|  w  \|_{L^2( \omega)} ^2
	+ 
	\|  \nabla w  \|_{L^2( \omega)} ^2 \right)
	+
	\tau^3  \|   w  \|_{L^2( \omega)} ^2
	+ 
	\tau\|   \nabla w  \|_{L^2( \omega)} ^2 
	\end{multline*}
	to both sides of the previous estimate and get
	\begin{multline*}
	\tau^{\frac{3}{2}+\frac{1}{d}}  \|    w \|_{L^{\frac{2d}{d-2}}(\Omega)}^2 
	+
	\tau^{\frac{3}{2}+\frac{1}{d}}  \min\left\{\frac{1}{\tau^2 |E|^{\frac{2}{d}}} ,1\right\} \left(	\tau^2 \|  w  \|_{L^2(E )} ^2
	+ 
	\|  \nabla w  \|_{L^2(E )} ^2 \right) 
	+\tau^3  \|   w  \|_{L^2(\Omega)} ^2
	+ 
	\tau\|   \nabla w  \|_{L^2(\Omega )} ^2 
	\\
	\leq
	C 
	\left( \| \tilde f_2\|^2_{L^2(\Omega)}
	+
	\tau^{2}\|   \tilde F_2 \|^2_{L^2(\Omega)} 
	+
	\tau^{\frac{3}{2}+\frac{1}{d}}\left( 
	\|   \tilde f_{2*'} \|_{L^{\frac{2d}{d+2}}(\Omega)}^2 
	+ \tau^2
	\|   \tilde F_{2*'} \|_{L^{\frac{2d}{d+2}}(\Omega)}^2
	+
	\|   \tilde F_{2*'} \|_{L^{2}(\Omega)}^2
	\right)  
	\right) 
	\\
	\qquad \qquad \quad
	+
	C 
	\tau^{\frac{3}{2}+\frac{1}{d}}
	\left( \tau^{2}
	\|    w \|^2 _{L^2(\omega)}
	+  
	\|   \nabla w\|^2_{L^{2}(\omega)}
	+  \|    w \|_{L^{\frac{2d}{d-2}}( \omega)}^2 \right). 
	\end{multline*}
	This concludes the proof of Theorem \ref{Thm-Improved-Carleman-estimates}.

	%
	%
	
	\section{A specific geometric setting}\label{A specific geometric setting} In this section, we will focus on a specific geometric setting involving a ball with a radius $R > 0$ (recall that $B_0(r)$ denotes the ball centred at $0$ and of radius $r$). Within this context, we aim to prove the following lemma on quantitative unique continuation, as presented in  Theorem \ref{Thm-QUCP}. 
	\begin{lemma}\label{lemma-a-specific-geomertric-setting}
		Let  $R>0$ and $d \geqslant 3$. We consider the following geometric setting (see Figure \ref{figure of the specific geometric setting}):
		\begin{equation*}
		\Omega = B_0(2R) \cap \left\{ x_1 < -\frac{R}{4}\right\}, \, \mathcal{O} = B_0\left(\frac{3R}{2}\right) \cap \left\{ x_1 < -\frac{R}{3} \right\} , \, \text{and } \omega = (B_0(2R ) \setminus B_0 (R)) \cap \left\{ x_1 < -\frac{R}{4}\right\} ,
		\end{equation*}
		
		There exist constants $C=C(R,d)>0$ and $\alpha \in (0,1)$ depending only on $R$ and $d$ so that any solution $u\in H^{1}(\Omega)$ of \eqref{Elliptic-UC} with $(V,W_1,W_2)$ as in \eqref{potential-assumptions} satisfies the quantitative unique continuation estimate \eqref{Quantitative-0} with $\gamma$ and $\delta$ as in \eqref{Conditions-gamma(q)-delta(q)}. 
	\end{lemma}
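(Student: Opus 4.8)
The plan is to combine the refined Carleman estimate of Theorem \ref{Thm-Improved-Carleman-estimates} with Wolff's lemma (Lemma \ref{Lemma-Wolff}) applied to the measure $d\mu = (|u|^2 + |\nabla u|^2)\,dx$ restricted to a suitable annular/slab region, exactly in the geometric setting described. The weight function will be chosen of the form $\varphi(x) = \varphi_0(x) + k\cdot x$ where $\varphi_0$ is a fixed weight satisfying the sub-ellipticity conditions \eqref{CarlemanWeight-Cond1}--\eqref{CarlemanWeight-Cond2} on the slab $\{x_1 < -R/4\}$ (e.g.\ something like $\varphi_0(x) = -x_1 + \frac12|x'|^2$ composed with a convexifying function, or a polynomial weight of the type used in \cite{Dehman-Ervedoza-Thabouti-2023}), and $k$ ranges over a compact convex set $\mathcal{C}$ of admissible linear perturbations — this is precisely the flexibility in the gradient of the weight that Wolff's argument needs. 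One checks that adding a linear term $k\cdot x$ with $|k|$ small does not destroy \eqref{CarlemanWeight-Cond1}--\eqref{CarlemanWeight-Cond2} (these conditions are open and the Hessian is unchanged by a linear perturbation), so Theorem \ref{Thm-Improved-Carleman-estimates} applies uniformly for all $k \in \mathcal{C}$ with constants independent of $k$.

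\textbf{First} I would cut off $u$ by a smooth function $\chi$ supported in $\Omega$ and equal to $1$ on a neighbourhood of $\mathcal{O}$, so that $v = \chi u$ has compact support in $\Omega$ and solves \eqref{Elliptic-Eq-general} with source terms $f_2 = \chi V u$, the gradient terms from $W_1\cdot\nabla u$ and $\operatorname{div}(W_2 u)$ split into the pieces $f_{2*'}$, $F_2$, $F_{2*'}$ according to the integrability of $V,W_1,W_2$ via Hölder and Sobolev embeddings (this is where the distinction between $q_0 > d/2$ versus $q_0 \geq d$, and $q_j > d$, enters and produces the exponents $\gamma(q_0)$ and $\delta(q_j)$), plus commutator terms supported in $\operatorname{supp}\nabla\chi \subset \omega$. \textbf{Then} I apply \eqref{Carleman-General-2-Improved} with weight $\varphi = \varphi_0 + k\cdot x$. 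The key point, following Wolff, is that the potential terms on the right-hand side — namely $\|e^{\tau\varphi} V u\|_{L^{2d/(d+2)}}$ and the analogous $W_1, W_2$ terms — cannot be absorbed globally because $\|V\|_{L^{q_0}}$ may be large, but they \emph{can} be absorbed on the \emph{complement} of a small "bad set" $E$ where the measure $d\mu_k = e^{2\tau k\cdot x}d\mu$ concentrates, because on the good set the exponential weight is controlled, and on $E$ itself the improved localized estimate in \eqref{Carleman-General-2-Improved} (the term with $\min\{1/(\tau|E|^{1/d}),1\}$) gives a gain precisely when $|E| \lesssim \tau^{-d}$. Wolff's lemma provides, for a range of directions $k$, such concentration sets $E_k$ with $|E_k|$ small and summing $|E_k|^{-1}$ large; averaging the Carleman inequality over $k \in J$ and exploiting \eqref{summation-property} lets one extract a single inequality in which the potential contributions are beaten by a power of $\tau$, provided $\tau$ is chosen as an appropriate power of the potential norms — this is where $e^{C\|V\|^{\gamma(q_0)} + C\|W_1\|^{\delta(q_1)} + C\|W_2\|^{\delta(q_2)}}$ emerges, after optimizing $\tau$.

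\textbf{The hard part will be} the bookkeeping in the Wolff averaging step: one must verify that the measure $\mu$ (or rather a localized version of it, which requires first an interior a priori bound $\|u\|_{H^1(\text{slightly larger set})} \lesssim \|u\|_{H^1(\Omega)}$ via elliptic regularity / Caccioppoli to even make sense of the "faster than exponential decay" hypothesis — trivially satisfied after compactifying by the cutoff) feeds correctly into Lemma \ref{Lemma-Wolff}, and that the dilation $(1+T)E_k$ in \eqref{concentration-property} is compatible with the $\tau$-scale of the cutoff $\eta_{x_0}$ used inside the Carleman estimate; one must also track that the geometry (annulus $\omega$ between $B_0(R)$ and $B_0(2R)$, target $\mathcal{O}$ inside $B_0(3R/2)$, all within the slab) genuinely allows a weight $\varphi_0$ whose sublevel sets propagate information from $\omega$ to $\mathcal{O}$ while keeping $\operatorname{supp}\nabla\chi$ on the "observed" side. \textbf{Finally}, once the weighted inequality
$$
\tau^{a}\,\|e^{\tau\varphi} u\|_{L^{2d/(d-2)}(\mathcal{O})} \leq C e^{C(\|V\|^{\gamma(q_0)}+\|W_1\|^{\delta(q_1)}+\|W_2\|^{\delta(q_2)})}\big(\|u\|_{H^1(\omega)} + (\text{small})\big)
$$
is obtained for suitable $\tau$, I choose $\tau$ to optimize, bound $e^{\tau\varphi}$ below on $\mathcal{O}$ and above on $\omega$ by positive constants (using that $\varphi > \varphi_{\min}^{\mathcal O} > \max_\omega \varphi$ can be arranged, or more honestly, one gets the three-ball-type interpolation $\|u\|_{H^1(\mathcal O)} \lesssim \|u\|_{H^1(\omega)}^\alpha \|u\|_{H^1(\Omega)}^{1-\alpha}$ with the stated exponential prefactor by splitting the weight region), yielding \eqref{Quantitative-0} with $\alpha \in (0,1)$ and $C$ depending only on $R$ and $d$. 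All of this runs parallel to the argument in \cite{wolff1992property} and its adaptation in the companion work \cite{Dehman-Ervedoza-Thabouti-2023}, the novelty being that the improved Carleman estimate \eqref{Carleman-General-2-Improved} — in particular its handling of $\operatorname{div}(F_{2*'})$ source terms and its localized $L^2$ bounds on small sets $E$ — is exactly what makes the dependence on the norms of $W_1, W_2$ (not just $V$) quantitative and allows $q_1, q_2$ down to the sharp threshold $d$.
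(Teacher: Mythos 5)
Your overall architecture is the paper's: cut off $u$, work with a family of weights that are simultaneously admissible for Theorem \ref{Thm-Improved-Carleman-estimates} (uniformly in the tilt parameter) and exponential tilts of a fixed measure so that Lemma \ref{Lemma-Wolff} applies, truncate the potentials into $L^{d/2}$/$L^d$/$L^\infty$ pieces at $\tau$-dependent levels, and close with the localized $E$-term of \eqref{Carleman-General-2-Improved} plus a weight-gap interpolation. The paper's weights are $\varphi_k(x)=k_1x_1^2+k'\cdot x'$ with $k$ in a cone around $e_1$ and $|k|$ playing the role of $\tau$; they become linear after the change of variables $y_1=x_1^2$, which is how Wolff's lemma is imported (Lemma \ref{lemma family of weight functions}). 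Your ``fixed $\varphi_0$ plus $k\cdot x$'' is a viable variant, provided you note that the effective convex set is $\tau\mathcal{C}$, so that its measure $\sim\tau^d$ is what makes \eqref{summation-property} useful.

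There is, however, a genuine gap in the Wolff step: the measure you feed to Lemma \ref{Lemma-Wolff} is the wrong one. You propose $d\mu=(|u|^2+|\nabla u|^2)\,dx$, whereas the argument requires the measure to be the $L^{\frac{2d}{d+2}}$-mass of the first-order source terms themselves, i.e.\ $d\mu_k=e^{\frac{2d}{d+2}\varphi_k}\bigl(|W_{1,d}\cdot\nabla v|^{\frac{2d}{d+2}}+\bigl||k|\,W_{2,d}\,v\bigr|^{\frac{2d}{d+2}}\bigr)\,dx$. With that choice, \eqref{concentration-property} with $T=0$ bounds the \emph{global} norm $\|e^{\varphi_k}W_{1,d}\cdot\nabla v\|_{L^{2d/(d+2)}(\Omega)}$ by twice its restriction to $E$, and on $E$ H\"older against $\|W_{1,d}\|_{L^d(E)}$ --- made $\leq C^{-1}\min\{(|k|\,|E|^{1/d})^{-1},1\}$ by a \emph{pigeonhole selection of a single good $k$} from the Wolff family via \eqref{summation-property}, not by averaging the inequalities over $k$ --- is absorbed exactly by the localized left-hand term of \eqref{Carleman-General-2-Improved}. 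With your measure, concentration only gives $\|e^{\varphi_k}\nabla v\|_{L^2(\Omega\setminus E)}\leq 2^{-1/2}\|e^{\varphi_k}\nabla v\|_{H^1\text{-type}(\Omega)}$, so the off-$E$ part of the source term still carries the full $\|W_{1,d}\|_{L^d(\Omega)}$ with the prefactor $\tau^{\frac34+\frac1{2d}}$; absorbing it against $\tau^{\frac12}\|e^{\varphi_k}\nabla v\|_{L^2(\Omega)}$ would force $\|W_{1,d}\|_{L^d}\lesssim\tau^{-\frac14-\frac1{2d}}$, which after optimizing the truncation level degrades the exponent to the old $\tilde\delta$-type rate and defeats the purpose of the lemma.

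A second, more minor slip: $\operatorname{supp}\nabla\chi\subset\omega$ cannot be arranged in this geometry, since $\partial\Omega$ contains the flat face $\{x_1=-R/4\}$ whose portion inside $B_0(R)$ is not in $\omega$. The commutator supported in the slab $\{x_1\in(-\frac{7R}{24},-\frac{R}{4})\}$ is not observed; it is dominated because the weight there is strictly below its infimum on $\mathcal{O}$ (property \eqref{Weights-Comparison}), and this is precisely what produces the $\|u\|_{H^1(\Omega)}^{1-\alpha}$ factor. You do invoke the weight splitting at the end, so this is repairable, but the assertion as written is false and hides the mechanism generating $\alpha$.
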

	
	\begin{figure}[!ht]
		\begin{tikzpicture}
		\filldraw[black] (0,0) circle (2pt) node[anchor=south,scale=0.9]{$0$} ;
		\draw (0,2) arc[start angle=90, end angle=270, radius=2cm];
		\draw (0,3) arc[start angle=90, end angle=270, radius=3cm];
		\draw (0,4) arc[start angle=90, end angle=270, radius=4cm];
		\draw[dotted] (0,-4.5) -- (0, 4.5);
		\begin{scope}
		\clip (-0.4,-5)--(-8,0)--(-0.4,5) -- cycle;
		\draw[pattern=north west lines, opacity=.5, pattern color=blue,start angle=0, end angle=180] (0,0) circle (4cm);
		\draw[fill=white,start angle=0, end angle=180] (0,0) circle (-2cm);
		\end{scope}
		\begin{scope}
		\clip (-0.8,-5)--(-8,0)--(-0.8,5) -- cycle;
		\draw[pattern=north west lines, opacity=.7, pattern color=red] (0,0) circle (3cm);
		\end{scope}
		\node at (-2.5,2.7) {${\color{blue} \omega}$};
		\draw[->] (-5,0)--(2,0);
		\draw[densely dotted](-0.8,-4.5)--(-0.8,4.5);
		\draw(-0.4,-4.5)--(-0.4,4.5);
		\node at (2.3,-0.4) {$ x_1$};
		\node at (0.4,2) {$ R$};
		\node at (0.4,4) {$2 R$};
		\node at (0.4,3) {$\frac{3R}{2} $};
		\node at (-1.2,4.5) {$ -\frac{R}{3} $};
		\node at (-0.3,4.5) {$ -\frac{R}{4} $};
		\node at (-1.2,0.5) {${\color{red}\mathcal{O}}$};
		\end{tikzpicture}
		\caption{The geometric setting of Lemma \ref{lemma-a-specific-geomertric-setting}: Propagation of smallness from the left to the right.}
		\label{figure of the specific geometric setting}
	\end{figure}
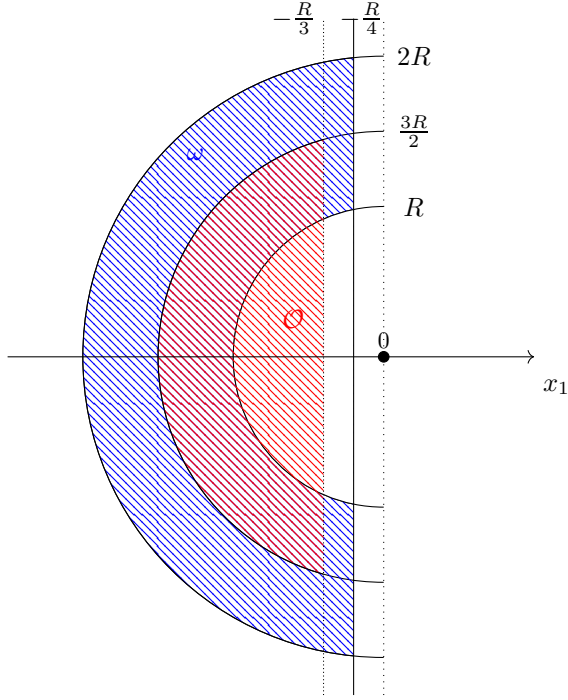

	Our goal is to explain how we can combine the Carleman estimates established in  Theorem \ref{Thm-Improved-Carleman-estimates} and Wolff's argument (Lemma \ref{Lemma-Wolff}) in order to obtain the quantitative unique continuation estimate \eqref{Quantitative-0}.
	
	A key remark is that Wolff's lemma applies for linear weight functions of the form $y \mapsto k \cdot y$ for $k \in \R^d$,  whereas our Carleman estimates are valid under appropriate subellipticity conditions on the weight function (\eqref{CarlemanWeight-Cond1}--\eqref{CarlemanWeight-Cond2}), while the parameter $\tau$ is a positive real number. 
	
	To employ both tools simultaneously,
	we construct a family of weight functions that satisfy the subellipticity conditions \eqref{CarlemanWeight-Cond1}--\eqref{CarlemanWeight-Cond2} and Wolff's argument. 
	 
	\begin{lemma} \label{lemma family of weight functions}
		Within the same setting as in Lemma \ref{lemma-a-specific-geomertric-setting}, for $k \in \R^d$, we set
		\begin{equation}
		\label{Def-varphi-k}
		\varphi_k(x) = k_1 x_1^2 + k' \cdot x', \qquad x \in \Omega.
		\end{equation}
		
		Then there exists $\epsilon>0$ such that 
		\begin{enumerate}
			\item \label{item1-lemma family of weight functions} For all  $k \in B_{e_1}(\epsilon)$, the function $\varphi_k$ satisfy \eqref{CarlemanWeight-Cond1} and \eqref{CarlemanWeight-Cond2} with some positive constants $\alpha >0$ and $\beta >0$ independent of $k \in B_{e_1}(\epsilon)$, and its $C^3$ norm on $\overline\Omega$ is bounded independently on  $k \in B_{e_1}( \epsilon)$.
			
			\item \label{item2-lemma family of weight functions} There exists $\rho >0 $ such that 
			\begin{equation} 
			\label{Weights-Comparison}
			\inf_{ k \in B_{e_1 }( \epsilon)  \ } \inf_{x \in \O} \{\varphi_k(x) \} 
			\geq 
			(1+\rho)
			\sup_{ k \in B_{e_1}(\epsilon) } 
			\left \{ 
			\sup_{x \in  \Omega \cap \{ x_1 \in (-\frac{7R}{24}, -\frac{R}{4}) \} } \{\varphi_k(x) \}
			\right\}.
			\end{equation} 
			\item \label{item3-lemma family of weight functions} Setting $\Sigma_\epsilon=\{ k \in \R^{d}\setminus\{0\} \text{ with } |k/|k| - e_1 | \leq \epsilon\}$, the family $(\varphi_k)_{k \in \Sigma_\epsilon}$ satisfies the following property: 
			If $ f$ is a positive compactly supported function in $\Omega$, we define the family $d \mu_k(x) = e^{\varphi_k(x)}f(x) dx$, then for $\mathcal{C} \subset \Sigma_\varepsilon$ there exist a family $(k_j)_{j \in J} $ of elements of $ \mathcal{C} $  and two by two disjoint sets $(E_{k_j})_{j \in J}$ included in $\Omega$ so that the measures $d \mu _{k_j}$  satisfy \eqref{concentration-property} with $T=0$ and  the family $( E_{  k_j})_{j \in J}$ satisfy  \eqref{summation-property}  with $C_W$ a positive constant depending only on $d$ and $\Omega$. 
		\end{enumerate}
	\end{lemma}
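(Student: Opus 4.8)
The three assertions are of different natures. Items \ref{item1-lemma family of weight functions} and \ref{item2-lemma family of weight functions} are direct verifications based on the explicit quadratic form of $\varphi_k$, whereas item \ref{item3-lemma family of weight functions} is the substantive one and will be deduced from Wolff's Lemma \ref{Lemma-Wolff} through a change of variables that linearizes the whole family $(\varphi_k)$.

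\emph{Items \ref{item1-lemma family of weight functions} and \ref{item2-lemma family of weight functions}.} The plan is to start from $\nabla\varphi_k(x)=(2k_1x_1,\,k')$ and from the fact that ${\rm Hess\,}\varphi_k$ is the constant matrix whose only nonzero entry is $\partial_1^2\varphi_k=2k_1$. On $\overline\Omega\subset\{x_1\leq -R/4\}\cap\overline{B_0(2R)}$ one has $R/4\leq |x_1|\leq 2R$, so for $|k-e_1|\leq\epsilon$ with $\epsilon$ small enough (depending on $R$) we get $|\nabla\varphi_k(x)|\geq 2k_1|x_1|-|k'|\geq\alpha>0$ uniformly on $\overline\Omega$, which yields \eqref{CarlemanWeight-Cond1} (with any $\omega_0\Subset\omega$). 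For \eqref{CarlemanWeight-Cond2}, since $k_1>0$ one has $({\rm Hess\,}\varphi_k)v\cdot v=2k_1 v_1^2\geq 0$, hence the left-hand side is at least $2k_1(\partial_1\varphi_k(x))^2=8k_1^3x_1^2$, while the right-hand side equals $\beta(4k_1^2x_1^2+|k'|^2)$; using $x_1^2\geq R^2/16$ and $|k'|\leq\epsilon$, for $\epsilon$ and $\beta>0$ small enough (uniformly in $k\in B_{e_1}(\epsilon)$) this inequality holds. The $C^3(\overline\Omega)$ bound is immediate, $\varphi_k$ being a quadratic polynomial with coefficients bounded by $|k|\leq 1+\epsilon$. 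For item \ref{item2-lemma family of weight functions}, I would first evaluate at $k=e_1$: $\varphi_{e_1}(x)=x_1^2$, so $\inf_{\mathcal O}\varphi_{e_1}=(R/3)^2=\tfrac{64}{576}R^2$, while $\sup\{\varphi_{e_1}(x):x\in\Omega,\ -\tfrac{7R}{24}<x_1<-\tfrac{R}{4}\}=(\tfrac{7R}{24})^2=\tfrac{49}{576}R^2$. Since $64>49$ there is strict slack, and as $(k,x)\mapsto\varphi_k(x)$ is continuous while $\overline{\mathcal O}$ and the closed slab are bounded, inequality \eqref{Weights-Comparison} persists for all $k\in B_{e_1}(\epsilon)$ with a fixed $\rho>0$ provided $\epsilon$ is small.

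\emph{Item \ref{item3-lemma family of weight functions}.} This is where the work lies. Because $\Omega\subset\{x_1<-R/4\}$, the map $\Phi(x_1,x')=(x_1^2,x')$ is a diffeomorphism from $\Omega$ onto $\tilde\Omega:=\Phi(\Omega)=\{(z_1,z'):z_1>R^2/16,\ z_1+|z'|^2<4R^2\}$, with smooth inverse $\Phi^{-1}(z_1,z')=(-\sqrt{z_1},\,z')$ (smooth since $z_1\geq R^2/16>0$), and, crucially, $\varphi_k\circ\Phi^{-1}(z)=k_1z_1+k'\cdot z'=k\cdot z$ is \emph{linear} in $z$. Given $f$ positive and compactly supported in $\Omega$, I would set $\tilde\mu:=\Phi_*(f\,dx)$, a positive measure with compact support in $\tilde\Omega$, hence having faster-than-exponential decay, and observe that $\Phi_*(\mu_k)=e^{k\cdot z}\tilde\mu=\tilde\mu_k$ in the notation of Lemma \ref{Lemma-Wolff}. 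Applying Lemma \ref{Lemma-Wolff} to $\tilde\mu$ and to the (compact convex) set $\mathcal C\subset\Sigma_\epsilon$ produces a family $(k_j)_{j\in J}$ in $\mathcal C$ and pairwise disjoint convex sets $(\tilde E_{k_j})_{j\in J}$ in $\R^d$ with $\tilde\mu_{k_j}(\R^d\setminus\tilde E_{k_j})\leq\tfrac12\|\tilde\mu_{k_j}\|$ and $\sum_j|\tilde E_{k_j}|^{-1}\geq C_W^{-1}|\mathcal C|$. I would then define $E_{k_j}:=\Phi^{-1}(\tilde E_{k_j}\cap\tilde\Omega)\subset\Omega$, which are pairwise disjoint. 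Since $\mu_{k_j}$ is supported in $\Omega$ and $\Phi$ is a bijection $\Omega\to\tilde\Omega$ with $\Phi_*\mu_{k_j}=\tilde\mu_{k_j}$, one gets $\mu_{k_j}(\R^d\setminus E_{k_j})=\tilde\mu_{k_j}(\tilde\Omega\setminus\tilde E_{k_j})\leq\tfrac12\|\tilde\mu_{k_j}\|=\tfrac12\|\mu_{k_j}\|$, i.e.\ \eqref{concentration-property} with $T=0$. Finally $|\det D\Phi^{-1}(z)|=\tfrac1{2\sqrt{z_1}}\leq\tfrac2R$ on $\tilde\Omega$, hence $|E_{k_j}|\leq\tfrac2R|\tilde E_{k_j}|$, so $\sum_j|E_{k_j}|^{-1}\geq\tfrac R2\sum_j|\tilde E_{k_j}|^{-1}\geq\tfrac R2 C_W^{-1}|\mathcal C|$, which is \eqref{summation-property} with constant $\tfrac2R C_W$, depending only on $d$ and $\Omega$.

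\emph{Main obstacle.} The only real subtlety is item \ref{item3-lemma family of weight functions}: one must recognize that the choice $\varphi_k(x)=k_1x_1^2+k'\cdot x'$ is straightened into a linear weight exactly by $\Phi(x)=(x_1^2,x')$, and that $\Phi$ is a global diffeomorphism on $\Omega$ — which is precisely why the geometry of Lemma \ref{lemma-a-specific-geomertric-setting} keeps $x_1$ away from $0$. Since $\Phi$ is nonlinear, the dilation structure in \eqref{concentration-property} is not preserved under pull-back, so only the case $T=0$ survives, which is exactly what is claimed. Items \ref{item1-lemma family of weight functions} and \ref{item2-lemma family of weight functions} are then routine once the derivatives of $\varphi_k$ are written out, modulo choosing $\epsilon>0$ small enough in terms of $R$ and $d$.
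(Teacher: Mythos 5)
Your proposal is correct and follows essentially the same route as the paper: items \ref{item1-lemma family of weight functions} and \ref{item2-lemma family of weight functions} by direct computation (including the same comparison $R^2/9 > (7/24)^2R^2$ plus a continuity argument in $k$), and item \ref{item3-lemma family of weight functions} via the change of variables $(x_1,x')\mapsto(x_1^2,x')$ that linearizes $\varphi_k$, followed by Wolff's Lemma and a Jacobian bound for the summation property. Your version is if anything slightly more explicit (e.g.\ intersecting $\tilde E_{k_j}$ with $\Phi(\Omega)$ before pulling back, and noting why only $T=0$ survives the nonlinear change of variables), so there is nothing to correct.
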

	
	\begin{proof} Items $1$ and $2$ can be checked directly using immediate computations, the fact that $\varphi_{e_1} (x) = x_1^2$ satisfies \eqref{CarlemanWeight-Cond1} and \eqref{CarlemanWeight-Cond2}, and 
		$$
		\inf_{x \in \O} \{\varphi_{e_1}(x) \} = \frac{R^2}{9} 
		>
		\sup_{x \in  \Omega \cap \{ x_1 \in (-\frac{7R}{24}, -\frac{R}{4}) \} } \{\varphi_{e_1}(x) \}
		= 
		\left(\frac{7}{24}\right)^2 R^2.
		$$

		It remains to check item \ref{item3-lemma family of weight functions}.  In order to do so, let us denote by $Y:x \mapsto y$ the diffeomorphism given by $y_1(x) = x_1^2$, and $y' = x'$ (this is clearly a diffeomorphism from $\Omega$ to $Y(\Omega)$ since $\Omega$ is away from $\{ x_1 = 0\}$), and $X$ the inverse of the map $Y$. Then remark that for all $k \in \R^d$
		$$
		\varphi_k(x) = k_1 x_1^2 + k' \cdot x' = k \cdot Y(x).
		$$
		
		Let then $ f$ be a positive compactly supported function in $\Omega$. Hence, in the new coordinates $y$, the family of measures $d \mu _{k}(x)= e^{\varphi_k(x)} f(x) dx$ becomes
		\begin{align}\label{d-mu_{k}(x)-in-coordinates-y}
		d \tilde \mu _k(y) = e^{k \cdot y}  f(X(y)) |\operatorname{Jac}(X) | dy ,    
		\end{align}
		where $\operatorname{Jac}(X)$ is the Jacobian of the map $X$. Consequently, by Lemma \ref{Lemma-Wolff}, for $  \mathcal{C} \subset \R^d$, there is a
		family $( k_j )_{j \in J}$ of elements of $\mathcal{C}$ and disjoint convex sets $( \tilde E_{  k_j})_{j \in J}$ such that the families $(d \tilde \mu_{   k_j})_{j \in J}$,   $( k_j )_{j \in J}$, and $(\tilde E_{  k_j})_{j \in J}$ satisfy \eqref{concentration-property} and \eqref{summation-property}  with $ \tilde C_W$ a positive constant depending only on $d$. Consequently, we  consider the sets $E_{k_j} = X(\tilde E_{k_j})$, which are disjoint  (not necessarily convex) and satisfy
		\begin{equation*}
		| E_{k_j} | \leqslant \| \operatorname{Jac} (X)\|_{\infty} |\tilde E_{k_j}|. 
		\end{equation*}
		Using this inequality, the summation property \eqref{summation-property} for the sets $E_{k_j}$ holds, with the constant $C_W= \|\operatorname{Jac}(X)\|^{-1}_{\infty} \tilde C_W$. On the other hand, the concentration property \eqref{concentration-property} with $T=0$  on $E_{k_j}$ for each $d   \mu_{   k_j}$ follows from the concentration property \eqref{concentration-property} for the family $d\tilde \mu_{k_j}$ on $\tilde E_{k_j}$ and the identity  \eqref{d-mu_{k}(x)-in-coordinates-y}.  
	\end{proof}
	
	As a consequence of the previous result, let us point that Theorem \ref{Thm-Improved-Carleman-estimates} holds for any $\varphi_k$ with $k \in B(e_1, \epsilon)$, with constants which are uniform with respect to $k \in B_{e_1}( \epsilon)$. Therefore, applying Theorem \ref{Thm-Improved-Carleman-estimates}, we readily deduce the following result: 
	
	\begin{lemma} \label{Lemma-Improved-Carleman-estimates-for-phi-k}
		Let $d\geqslant 3$. Let
		$$
		\Omega = B_0(2R) \cap \left\{ x_1 < -\frac{R}{4}\right\}, \, \mathcal{O} = B_0\left(\frac{3R}{2}\right) \cap \left\{ x_1 < -\frac{R}{3} \right\} , \, \text{and } \omega = (B_0(2R ) \setminus B_0 (R))  \cap \left\{ x_1 < -\frac{R}{4}\right\},
		$$
		
		Then, for all compact subset $K$ of $\Omega$ there exist $C>0$ and $\tau_0 \geq 1$ such that for all $u \in H^1(\Omega)$ satisfying $\operatorname{supp} u \subset K$ and \eqref{Elliptic-Eq-general}
		with $(f_2, f_{2*'}, F = F_2 + F_{2*'})$ as in \eqref{Reg-Source-Terms-Improved-Gal},
		we have, for all $k \in \Sigma_\epsilon$ with $|k| \geq \tau_0$, with $\varphi_k$ as in \eqref{Def-varphi-k},
		\begin{multline}
		\label{Carleman-General-1-Improved-k}	
		|k|^{\frac{3}{2}} \| e^{\varphi_k}  u \|_{L^2(\Omega)} +
		|k|^{\frac{1}{2}} \| e^{ \varphi_k}  \nabla u \|_{L^2(\Omega)} 
		\leq 
		C \left( 
		\| e^{\varphi_k}  f_2 \|_{L^2(\Omega)}
		+ 
		|k| 	\| e^{ \varphi_k}  F_2 \|_{L^2(\Omega)}+|k|^{\frac{1}{2}}   \| e^{ \varphi_k}  F_{2*'}\|_{L^2(\Omega)}
		\right.
		\\
		\left.
		+
		|k|^{\frac{3}{4}-\frac{1}{2d}}
		\left( \| e^{\varphi_k} f_{2*'} \|_{L^{\frac{2d}{d+2}} (\Omega)}
		+ 
		|k| 	\| e^{ \varphi_k}  F_{2*'} \|_{L^{\frac{2d}{d+2}}(\Omega)}
		\right)
		+ 
		|k|^{\frac{3}{2}} \| e^{\varphi_k} u\|_{H^1(\omega)}
		\right), 
		\end{multline}
		and, for all measurable sets $E$ of $\Omega$, 
		\begin{multline}
		\label{Carleman-General-2-Improved-k}		 
		|k|^{\frac{3}{4}+\frac{1}{2d}} \| e^{\varphi_k} u \|_{L^{\frac{2d}{d-2}} (\Omega)}
		+ |k|^{\frac{3}{4}+\frac{1}{2d}}  \min\left\{\frac{1}{|k| |E|^{\frac{1}{d}}} ,1\right\}
		\left( |k|\| e^{\varphi_k} u \|_{L^2(E)} + \| e^{\varphi_k} \nabla u \|_{L^2(E)}\right) 
		\\
		\leq 
		C \left( 
		\| e^{\varphi_k}  f_2 \|_{L^2(\Omega)}
		+
		|k| 	\| e^{\varphi_k}  F_2 \|_{L^2(\Omega)}
		+|k|^{\frac{3}{4}+\frac{1}{2d}} \left(
		\| e^{\varphi_k} f_{2*'} \|_{L^{\frac{2d}{d+2}}(\Omega)}  + |k| \| e^{\varphi_k}F_{2*'} \|_{L^{\frac{2d}{d+2}}(\Omega)}
		\right) 
		\right.
		\\
		\left.
		+|k|^{\frac{3}{4}+\frac{1}{2d}}  \|e^{\varphi_k} F_{2*'} \|_{L^2(\Omega)} 
		+ 
		|k|^{\frac{7}{4}+\frac{1}{2d}} \| e^{\varphi_k} u\|_{H^1(\omega)}
		\right).
		\end{multline}
	\end{lemma}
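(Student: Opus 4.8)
The plan is to read Lemma~\ref{Lemma-Improved-Carleman-estimates-for-phi-k} as a direct consequence of Theorem~\ref{Thm-Improved-Carleman-estimates}, applied with Carleman parameter $\tau=|k|$ and weight function $\varphi_{k/|k|}$. The starting point is the homogeneity of $k\mapsto\varphi_k$: since $\varphi_k(x)=k_1x_1^2+k'\cdot x'$ is linear in $k$, one has $|k|\,\varphi_{k/|k|}=\varphi_k$, hence $e^{|k|\varphi_{k/|k|}}=e^{\varphi_k}$, and likewise $\min\{1/(\tau|E|^{1/d}),1\}=\min\{1/(|k||E|^{1/d}),1\}$ when $\tau=|k|$. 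Consequently every occurrence of $e^{\tau\varphi}$ and every power of $\tau$ in \eqref{Carleman-General-1-Improved}--\eqref{Carleman-General-2-Improved} becomes, after this substitution, exactly the corresponding quantity in \eqref{Carleman-General-1-Improved-k}--\eqref{Carleman-General-2-Improved-k}. The only things to verify are therefore (i) that the weights $\varphi_{k/|k|}$ fall within the scope of Theorem~\ref{Thm-Improved-Carleman-estimates} with constants uniform in the direction of $k$, and (ii) a small adjustment of the observation set $\omega$, which is not compactly contained in $\Omega$.

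For (i): given $k\in\Sigma_\epsilon$ the unit vector $e=k/|k|$ satisfies $|e-e_1|\le\epsilon$, and by item~\ref{item1-lemma family of weight functions} of Lemma~\ref{lemma family of weight functions} (whose proof, being by direct computation, gives the uniform bounds on the closed ball $\{|e-e_1|\le\epsilon\}$ as well) the function $\varphi_e\in C^3(\overline\Omega)$ satisfies \eqref{CarlemanWeight-Cond1}--\eqref{CarlemanWeight-Cond2} with constants $\alpha,\beta>0$ and a $C^3$ bound that are all independent of $e$. Moreover $\Omega=B_0(2R)\cap\{x_1<-R/4\}$ is a bounded domain, and Theorem~\ref{Thm-Improved-Carleman-estimates} requires no boundary regularity of $\Omega$; hence the constants $C$ and $\tau_0$ it produces depend only on $\alpha$, $\beta$, the $C^3$ bound, the compact set $K$, and the fixed auxiliary pair $\omega_0\Subset\tilde\omega\Subset\Omega$ introduced next, and in particular are uniform over $k\in\Sigma_\epsilon$.

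For (ii): the set $\omega=(B_0(2R)\setminus B_0(R))\cap\{x_1<-R/4\}$ is not $\Subset\Omega$, so Theorem~\ref{Thm-Improved-Carleman-estimates} cannot be applied with $\omega$ itself; instead we exploit $\operatorname{supp}u\subset K$. Choose $\delta>0$ with $K\subset B_0(2R-\delta)\cap\{x_1\le-R/4-\delta\}$, set $\tilde\omega=(B_0(2R-\delta/2)\setminus\overline{B_0(R)})\cap\{x_1<-R/4-\delta/2\}$, and fix any nonempty open $\omega_0\Subset\tilde\omega$. A direct check gives $\overline{\tilde\omega}\subset\Omega$, i.e.\ $\tilde\omega\Subset\Omega$, and $\tilde\omega\cap K\subset\omega$. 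Since $u$ and $\nabla u$ vanish on the open set $\Omega\setminus K$, one has $\|e^{\varphi_k}u\|_{H^1(\tilde\omega)}=\|e^{\varphi_k}u\|_{H^1(\tilde\omega\cap K)}\le\|e^{\varphi_k}u\|_{H^1(\omega)}$, so the observation terms of Theorem~\ref{Thm-Improved-Carleman-estimates} (stated with $\tilde\omega$) are dominated by those appearing in \eqref{Carleman-General-1-Improved-k}--\eqref{Carleman-General-2-Improved-k}.

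Combining: fix $K$, choose $\tilde\omega$ and $\omega_0$ as above, and let $C,\tau_0$ be the constants from Theorem~\ref{Thm-Improved-Carleman-estimates} for the configuration $(\Omega,\tilde\omega,\omega_0,K)$, valid simultaneously for all weights $\varphi_e$ with $|e-e_1|\le\epsilon$ by (i). For $k\in\Sigma_\epsilon$ with $|k|\ge\tau_0$, apply \eqref{Carleman-General-1-Improved} and \eqref{Carleman-General-2-Improved} with $\tau=|k|$ and $\varphi=\varphi_{k/|k|}$, then substitute $\tau\varphi=\varphi_k$ and bound the $H^1(\tilde\omega)$ observation terms by the $H^1(\omega)$ ones; this yields exactly \eqref{Carleman-General-1-Improved-k} and \eqref{Carleman-General-2-Improved-k}. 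I do not expect a genuine obstacle here: the entire content of the lemma is contained in Theorem~\ref{Thm-Improved-Carleman-estimates} and in item~\ref{item1-lemma family of weight functions} of Lemma~\ref{lemma family of weight functions}, and the only points needing care are the bookkeeping identity $|k|\varphi_{k/|k|}=\varphi_k$ and the support-driven replacement of $\omega$ by $\tilde\omega$.
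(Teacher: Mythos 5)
Your proof is correct and takes essentially the same route as the paper: Lemma~\ref{Lemma-Improved-Carleman-estimates-for-phi-k} is obtained by applying Theorem~\ref{Thm-Improved-Carleman-estimates} with $\tau=|k|$ and weight $\varphi_{k/|k|}$, using the homogeneity $\varphi_k=|k|\,\varphi_{k/|k|}$ together with the uniformity over $B_{e_1}(\epsilon)$ of the constants in item~\ref{item1-lemma family of weight functions} of Lemma~\ref{lemma family of weight functions}. Your replacement of $\omega$ (which indeed is not compactly contained in $\Omega$) by a smaller $\tilde\omega\Subset\Omega$ with $\tilde\omega\subset\omega$, justified via $\operatorname{supp}u\subset K$, is a detail the paper leaves implicit, and you handle it correctly.
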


	We are now in a position to prove Lemma \ref{lemma-a-specific-geomertric-setting}.
	
	\begin{proof}[Proof of Lemma \ref{lemma-a-specific-geomertric-setting}] For sake of clarity, we divide the proof in several steps. 
		\medskip
		
		\textit{Step 1: Application of the Carleman estimates.} For $u \in H^1(\Omega)$ with 
		$$
		\Delta u = V u + W_1 \cdot \nabla u + \div(W_2 u) \quad \text{ in } \Omega,   
		$$
		we set $v = \eta u$, where $\eta $ is a smooth cut-off function that takes $1$ in $B_0(3R/2) \cap \{x_1 < - 7R/24\} $ and vanishes in a neighbourhood of $\partial\Omega$, so that we have 
		$$
		\Delta v = V v + W_1 \cdot \nabla v + \div(W_2 v) + f_\eta \quad \text{ in } \Omega,   
		$$
		where $ f_\eta$ is defined by 
		\begin{align}\label{def_f-eta}
		f_\eta=2 \nabla \eta \cdot \nabla u + \Delta \eta u - W_1 \cdot \nabla \eta u- W_2 \cdot \nabla \eta u,  
		\end{align}  
		and thus satisfies
		\begin{equation}
		\label{Support-f-eta}
		\text{Supp\,} f_\eta \subset \omega \cup (\Omega \cap \{ x_1 \in (- 7R/24, - R/4)\} ).
		\end{equation}
		
		Now, for $V \in L^{q_0}(\Omega)$, $W_1 \in L^{q_1}(\Omega; \C^d)$, and $W_2 \in L^{q_2}(\Omega; \C^d)$, with $q_0>d/2$, $q_1>d$, and  $q_2>d$, we will perform a decomposition of the form
		\begin{align*}
		& V  = V_{\frac{d}{2}} +V_{d}+  V_\infty, 
		\quad &&\hbox{ with } 
		V_{\frac{d}{2}} \in L^{\frac{d}{2}}(\Omega), 
		\
		V_{d}  \in L^{d}(\Omega),
		\
		V_{\infty} \in L^{\infty}(\Omega), 
		\\ 
		& W_1  = W_{1,d} + W_{1,\infty}, 
		\quad &&\hbox{ with } 
		W_{1,d} \in L^{d}(\Omega; \C^d), 
		\
		W_{1,\infty} \in L^{\infty}(\Omega); \C^d), 
		\\ 
		& W_2  = W_{2,d} + W_{2,\infty}, 
		\quad && \hbox{ with } 
		W_{2,d} \in L^{d}(\Omega); \C^d), 
		\
		W_{2,\infty} \in L^{\infty}(\Omega; \C^d). 
		\end{align*}
		We will explain later, in Step 4 of the proof, the precise decomposition we will choose. 

		We then apply Lemma \ref{Lemma-Improved-Carleman-estimates-for-phi-k}. The Carleman estimate \eqref{Carleman-General-1-Improved-k} with $f_{2*'} = V_{\frac{d}{2}} v + V_{d} v+ W_{1,d} \cdot \nabla v$, $f_2 =  V_{\infty} v+ W_{1,\infty} \cdot \nabla v +f_\eta$, $F_{2*'} = W_{2,d}  v$, and  $F_{2} = W_{2,\infty}  v$ yields
		that for all $k \in \Sigma_\varepsilon$ with $|k| \geq \tau_0$,
		\begin{multline*}
		|k|^{\frac{3}{2}} \| e^{\varphi_k}  v \|_{L^2(\Omega)} 
		+
		|k|^{\frac{1}{2}} \| e^{\varphi_k}  \nabla v \|_{L^2(\Omega)}	
		\leq 
		C \left( 
		\| V_\infty \|_{L^\infty(\Omega)} \| e^{\varphi_k} v \|_{L^2(\Omega)}
		+
		\| W_{1,\infty} \|_{L^\infty} \| e^{\varphi_k} \nabla v \|_{L^2(\Omega)}
		\right.
		\\
		+
		\| e^{ \varphi_k}  f_\eta \|_{L^2(\Omega)}
		+
		|k| \| W_{2,\infty} \|_{L^\infty} \| e^{\varphi_k} v \|_{L^2(\Omega)}
		+ 
		|k|^{\frac{1}{2}}\| W_{2,d} \|_{L^d(\Omega)} \| e^{ \varphi_k}   v \|_{L^{\frac{2d}{d-2}}(\Omega)} 
		\\
		+ 
		|k|^{\frac{3}{2}} \| e^{ \varphi_k} v\|_{H^1(\omega)}
		+
		\left.
		|k|^{\frac{3}{4}-\frac{1}{2d}} 
		\left(
		\| V_d \|_{L^d(\Omega)}   \| e^{\varphi_k}  v \|_{L^2(\Omega)} 
		+ 
		\| V_{\frac{d}{2}} \|_{L^{\frac{d}{2}}(\Omega)}   \| e^{\varphi_k}  v \|_{L^{\frac{2d}{d-2}}(\Omega)} 
		\right)
		\right.
		\\
		\left.
		+
		|k|^{\frac{3}{4}-\frac{1}{2d}} \left( \| e^{\varphi_k}  W_{1,d} \cdot \nabla v \|_{L^{\frac{2d}{d+2}}  (\Omega)}
		+ 
		|k|\| e^{\varphi_k}  W_{2,d}    v \|_{L^{\frac{2d}{d+2}}(\Omega)} \right)
		\right). 
		\end{multline*}
		
		Accordingly, there exists $c_0 >0$ such that if 
		\begin{equation}
		\label{Constraints-1}
		\left( \|V_\infty \|_{L^\infty(\Omega)} + |k|^{\frac{3}{4}-\frac{1}{2d}} \|V_d \|_{L^d(\Omega)}\right) \leq c_0 |k|^{\frac{3}{2}}, 
		\quad \text{ and } \quad
		\left(\| W_{1, \infty}\|_{L^\infty(\Omega)} +\| W_{2, \infty}\|_{L^\infty(\Omega) } \right) \leq c_0 |k|^{\frac{1}{2}},
		\end{equation}
		for all $k \in \Sigma_\varepsilon$ with $|k| \geq \tau_0$,
		\begin{multline}
		\label{Carleman-General-1-v-k-OurPb}	
		|k|^{\frac{3}{2}} \| e^{\varphi_k}  v \|_{L^2(\Omega)} 
		+
		|k|^{\frac{1}{2}} \| e^{\varphi_k}  \nabla v \|_{L^2(\Omega)}	
		\leq 
		C_1 \left( 
		\| e^{ \varphi_k}  f_\eta \|_{L^2(\Omega)}
		+ 
		|k|^{\frac{3}{2}} \| e^{ \varphi_k} u\|_{H^1(\omega)}
		\right.
		\\
		+ 
		\left(|k|^{\frac{1}{2}}\| W_{2,d} \|_{L^d(\Omega)} + |k|^{\frac{3}{4}-\frac{1}{2d}}   \| V_{\frac{d}{2}} \|_{L^{\frac{d}{2}}(\Omega)}  \right) \| e^{ \varphi_k}   v \|_{L^{\frac{2d}{d-2}}(\Omega)} 
		\\
		+
		\left. 
		|k|^{\frac{3}{4}-\frac{1}{2d}} 
		\left( 
		\| e^{\varphi_k}  W_{1,d} \cdot \nabla v \|_{L^{\frac{2d}{d+2}}  (\Omega)}
		+ 
		|k|\| e^{\varphi_k}  W_{2,d}    v \|_{L^{\frac{2d}{d+2}}(\Omega)} \right)
		\right).
		\end{multline}
		
		Similarly, the Carleman estimate \eqref{Carleman-General-2-Improved} with $f_{2*'} = V_{\frac{d}{2}} v + W_{1,d} \cdot \nabla v$, $f_2 =V_{\infty} v + V_{d} v + W_{1,\infty} \cdot \nabla v+f_{\eta}$,  $F_{2*'} = W_{2,d}  v$, and  $F_{2} = W_{2,\infty}   v$ yields, %
		that for all $k \in \Sigma_\varepsilon$ with $|k| \geq \tau_0$, and for all measurable set $E$;
		\begin{multline*}
		|k|^{\frac{3}{4}+\frac{1}{2d}} \| e^{ \varphi_k} v \|_{L^{\frac{2d}{d-2}} (\Omega)}
		+ 
		|k|^{\frac{3}{4}+\frac{1}{2d}}  \min\left\{\frac{1}{|k| |E|^{\frac{1}{d}}} ,1\right\}
		\left( |k| \| e^{ \varphi_k}  v \|_{L^2(E)} + \| e^{ \varphi_k}  \nabla v \|_{L^2(E)}\right) 
		\\
		\leq 
		C \left( 
		\| V_\infty\|_{L^\infty(\Omega)} \| e^{ \varphi_k} v \|_{L^{2} (\Omega)}
		+ 
		\| V_d \|_{L^d (\Omega)} \| e^{\varphi_k}v \|_{L^{\frac{2d}{d-2} (\Omega)}}
		+ 
		\| W_{1, \infty} \|_{L^\infty(\Omega)} \| e^{\varphi_k} \nabla v\|_{L^2(\Omega) }
		+ 
		\| e^{\varphi_k}  f_\eta \|_{L^2(\Omega)}
		\right.
		\\
		+
		|k|^{\frac{3}{4}+\frac{1}{2d}} 
		\left( 
		\|V_{\frac{d}{2}} \|_{L^{\frac{d}{2}}(\Omega)}  \| e^{ \varphi_k} v \|_{L^{\frac{2d}{d-2}} (\Omega)}
		+	
		\| e^{\varphi_k}  W_{1,d} \cdot \nabla v \|_{L^{\frac{2d}{d+2}}(\Omega)}   
		+ 
		|k| \| e^{\varphi_k}  W_{2,d}    v \|_{L^{\frac{2d}{d+2}}(\Omega)}  
		\right) 
		\\
		\left.
		+
		|k| \| W_{2, \infty}\|_{L^\infty(\Omega)} \| e^{\varphi_k} v\|_{L^2(\Omega)} 
		+ 
		|k|^{\frac{3}{4}+\frac{1}{2d}} \| W_{2, d}\|_{L^d(\Omega)} \| e^{|k| \varphi_k}   v \|_{L^{\frac{2d}{d-2}} (\Omega)}
		+ 
		|k|^{\frac{7}{4}+\frac{1}{2d}} \| e^{\varphi_k} u \|_{H^1(\omega)}
		\right).
		\end{multline*}
		
		Accordingly, there exists $c_1>0$ such that if 
		\begin{equation}
		\label{Constraints-2}
		\|V_{\frac{d}{2}} \|_{L^{\frac{d}{2}}(\Omega)} \leq c_1, 
		\qquad 
		\|V_d \|_{L^d(\Omega)} \leq c_1 |k|^{\frac{3}{4}+\frac{1}{2d}},
		\quad \text{ and } \quad
		\|W_{2,d} \|_{L^d(\Omega)} \leq c_1,  
		\end{equation}
		for all $k \in \Sigma_\varepsilon$ with $|k| \geq \tau_0$, and for all measurable set $E$;
		\begin{multline}
		\label{Carleman-General-2-v-k-OurPb}	 
		|k|^{\frac{3}{4}+\frac{1}{2d}} \| e^{ \varphi_k} v \|_{L^{\frac{2d}{d-2}} (\Omega)}
		+ 
		|k|^{\frac{3}{4}+\frac{1}{2d}}  \min\left\{\frac{1}{|k| |E|^{\frac{1}{d}}} ,1\right\}
		\left( |k| \| e^{ \varphi_k}  v \|_{L^2(E)} + \| e^{ \varphi_k}  \nabla v \|_{L^2(E)}\right) 
		\\
		\leq 
		C_2 \left( 
		\left(\| V_\infty\|_{L^\infty(\Omega)} +
		|k| \| W_{2, \infty}\|_{L^\infty(\Omega)} \right)
		\| e^{ \varphi_k} v \|_{L^{2} (\Omega)}
		+ 
		\| W_{1, \infty} \|_{L^\infty(\Omega)} \| e^{\varphi_k} \nabla v\|_{L^2(\Omega) }
		+ 
		\| e^{\varphi_k}  f_\eta \|_{L^2(\Omega)}
		\right.
		\\
		+
		|k|^{\frac{3}{4}+\frac{1}{2d}} 
		\left( 
		\| e^{\varphi_k}  W_{1,d} \cdot \nabla v \|_{L^{\frac{2d}{d+2}}(\Omega)}   
		+ 
		|k| \| e^{\varphi_k}  W_{2,d}    v \|_{L^{\frac{2d}{d+2}}(\Omega)}  
		\right) 
		\left.
		+ 
		|k|^{\frac{7}{4}+\frac{1}{2d}} \| e^{\varphi_k} u \|_{H^1(\omega)}
		\right).
		\end{multline}
		From now on, we will assume that conditions \eqref{Constraints-1} and \eqref{Constraints-2} are satisfied.
		\medskip
		
		\textit{Step 2: Application of Wolff's argument.} Let $n \in \R$ be larger than $\tau_0/(1- \epsilon)$, with $\epsilon$ as in Lemma \ref{lemma family of weight functions}. We set $\tilde {\mathcal{C}_n }= \{ k \in \R^d, \text{ such that } |k - n e_1| \leq \epsilon n \}$, so that $\tilde {\mathcal{C}_n }= n B_{e_1}(\epsilon)\subset \Sigma_\epsilon$. For all $k \in \tilde {\mathcal{C}_n }$, we define the measure
		\begin{align*}
		d\mu_{k} & = \left(|e^{ \varphi_k(x)}  W_{1,d}(x) \cdot \nabla v(x)|^{\frac{2d}{d+2}} + | (1+ \epsilon) n e^{ \varphi_k(x)}  W_{2,d}(x)  v(x)|^{\frac{2d}{d+2}} \right)\, dx
		\\ 
		& = e^{ \varphi_{2dk/(d+2)}(x)} \left(| W_{1,d}(x) \cdot \nabla v(x)|^{\frac{2d}{d+2}} + | (1+ \epsilon) n W_{2,d}(x)  v(x)|^{\frac{2d}{d+2}} \right)\, dx. 
		\end{align*}
		Then Lemma \ref{lemma family of weight functions} (applied to $\mathcal{C}_n = 2d\tilde {\mathcal{C}_n }/(d+2) $) implies
		the existence of a constant $C_{W} >0$ such that for all $n \in \N$, there exists a set of index $J_{n}$, a family $(k_{j,n})_{j \in J_{n}}$ of elements of $\tilde {\mathcal{C}_n }$ and a corresponding family of pairwise disjoint sets $(E_{k_{j,n}})_{j\in J_{n}}$ such that for all $j \in J_{n}$,  we have 
		\begin{multline}
		\label{Everything-in-E}
		\| |e^{\varphi_{k_{j,n}}} W_{1,d} \cdot \nabla v|^{\frac{2d}{d+2}} + | n (1+ \epsilon) e^{ \varphi_{k_{j,n}}}  W_{2,d}    v |^{\frac{2d}{d+2}}  \|^{\frac{d+2}{2d}}_{L^{1} (\Omega)}
		\\
		\leq 
		2 \| |e^{ \varphi_{k_{j,n}}(x)}  W_{1,d}(x) \cdot \nabla v(x)|^{\frac{2d}{d+2}} + |  n (1+ \epsilon) e^{  \varphi_{k_{j,n}}(x)}  W_{2,d}(x)  v(x)|^{  \frac{2d}{d+2}} \|^{\frac{d+2}{2d}}_{L^{1}(E_{k_{j,n}})}
		\end{multline}
		and 
		\begin{equation}
		\sum_{j \in J_{n}} |E_{k_{j,n}}|^{-1} \geq \frac{1}{C_{W} } \left(\frac{2d}{d+2}\right)^d n^d.
		\end{equation}
		
		Hence, we claim that if the conditions
		\begin{equation}
		\label{Smallness-W-1-2}
		\left\{
		\begin{array}{l}
		\ds  \| W_{1,d} \|_{L^d(\Omega)}^d + \left(\frac{1+\epsilon}{1-\epsilon} \right)^d \| W_{2,d} \|_{L^d(\Omega)}^d < \frac{1}{C_{W}  (16 C_2 (1+ \epsilon))^d} \left(\frac{2d}{d+2}\right)^d , 
		\\
		\ds 8 C_2  \left(  \| W_{1,d} \|_{L^d(\Omega)} + \frac{1+\epsilon}{1- \epsilon}  \| W_{2,d} \|_{L^d(\Omega)} \right) \leq 1,
		\end{array}
		\right.
		\end{equation}
		(where $C_2$ is the constant in \eqref{Carleman-General-2-v-k-OurPb}) 
		are satisfied, then for all $n  \in \N$, there exists $j_{*,n} \in J_{n}$ such that 
		\begin{equation}
		\label{Def-k-i-*-n}
		8C_2 \left( \| W_{1,d} \|_{L^d(E_{k_{j_* ,n}})} +\frac{1+ \epsilon}{1- \epsilon} \| W_{2,d} \|_{L^d(E_{k_{j_* ,n}})}  \right)\leqslant \frac{1}{|k_{j_{*,n} ,n}| |E_{k_{j_{*,n} ,n}} |^{\frac{1}{d}} }. 
		\end{equation}
		Indeed, if not, for all $j \in J_n$, we would have  
		$$|E_{k_{j,n}} |^{-1}  \leq (16 C_2 (1+ \epsilon) n)^d  
		\left( \| W_{1,d} \|_{L^d(E_{k_{j,n}})}^d+\left(\frac{1+\epsilon}{1-\epsilon} \right)^d\| W_{2,d} \|_{L^d(E_{k_{j,n}})}^d\right), 
		$$
		(where we use the elementary estimate $(a+b)^d \leq 2^d( a^d + b^d)$ for $a, b \geq 0$).
		
		By summing these estimates over $j \in J_{n}$ and taking into account that the sets $(E_{k_{j,n}})_{j \in J_n}$ are pairwise disjoint, we would get   
		$$
		\frac{1}{C_{W} }\left(\frac{2d}{d+2}\right)^d  n^d \leq \sum_{j \in J_{n}} |E_{k_{j,n}}|^{-1}
		\leq
		(16 C_2 (1+ \epsilon) {n})^d  \left(\| W_{1,d} \|_{L^d(\Omega)}^d + \left(\frac{1+\epsilon}{1-\epsilon} \right)^d\| W_{2,d} \|_{L^d(\Omega)}^d\right),
		$$
		which would contradict \eqref{Smallness-W-1-2}. 
		
		We thus assume condition \eqref{Smallness-W-1-2}. For $n \geq \tau_0/(1- \epsilon)$, we set $k_n = k_{j_{*,n}}$, where $  k_{j_{*,n}}$ is such that \eqref{Def-k-i-*-n} holds, and we set $E_n = E_{k_{j_*,n}}$. 
		We then deduce from \eqref{Carleman-General-2-v-k-OurPb} that 
		\begin{multline}
		\label{Carleman-General-2-v-Local-terms}	 
		|k_n|^{\frac{3}{4}+\frac{1}{2d}}  \min\left\{\frac{1}{|k_n| |E_n|^{\frac{1}{d}}} ,1\right\}
		\left( |k_n| \| e^{ \varphi_{k_n}}  v \|_{L^2(E_n)} + \| e^{ \varphi_{k_n}}  \nabla v \|_{L^2(E_n)}\right) 
		\\
		\leq 
		C_2 \left( 
		\left(\| V_\infty\|_{L^\infty(\Omega)} +
		|k_n| \| W_{2, \infty}\|_{L^\infty(\Omega)} \right)
		\| e^{ \varphi_{k_n}} v \|_{L^{2} (\Omega)}
		+ 
		\| W_{1, \infty} \|_{L^\infty(\Omega)} \| e^{\varphi_{k_n}} \nabla v\|_{L^2(\Omega) }
		+ 
		\| e^{\varphi_{k_n}}  f_\eta \|_{L^2(\Omega)}
		\right.
		\\
		+
		|k_n|^{\frac{3}{4}+\frac{1}{2d}} 
		\left( 
		\| e^{\varphi_{k_n}}  W_{1,d} \cdot \nabla v \|_{L^{\frac{2d}{d+2}}(\Omega)}   
		+ 
		|k_n| \| e^{\varphi_{k_n}}  W_{2,d}    v \|_{L^{\frac{2d}{d+2}}(\Omega)}  
		\right) 
		\left.
		+ 
		|k_n|^{\frac{7}{4}+\frac{1}{2d}} \| e^{\varphi_{k_n}} u \|_{H^1(\omega)}
		\right).
		\end{multline}
		Using then the classical estimates $|a|^\alpha+|b|^\alpha \leq 2 (|a| + |b|)^\alpha$ and $(|a| +|b|)^\alpha \leq |a|^\alpha + |b|^\alpha$ for $\alpha \in [0,1]$ and $a, b \in \R$, we obtain that 
		\begin{align*}
		& 
		\| e^{\varphi_{k_n}}  W_{1,d} \cdot \nabla v \|_{L^{\frac{2d}{d+2}}(\Omega)}   
		+ 
		|k_n| \| e^{\varphi_{k_n}}  W_{2,d}    v \|_{L^{\frac{2d}{d+2}}(\Omega)}  
		\\
		& \leq 2 \| e^{\frac{2d}{d+2} \varphi_{k_n}} \left( | W_{1,d} \cdot \nabla v|^{{\frac{2d}{d+2}}}
		+ 
		(n (1+ \epsilon)) |W_{2,d} v |^{\frac{2d}{d+2}} \right)
		\|_{L^{1}(\Omega)}^{\frac{d+2}{2d}} 
		\\ 
		& 
		\leq 4 \| e^{\frac{2d}{d+2} \varphi_{k_n}} \left( | W_{1,d} \cdot \nabla v|^{{\frac{2d}{d+2}}}
		+ 
		(n (1+ \epsilon)) |W_{2,d} v |^{\frac{2d}{d+2}} \right)
		\|_{L^{1}(E_n)}^{\frac{d+2}{2d}} 
		\\ 
		& 
		\leq 
		4 \| e^{\varphi_{k_n}}  W_{1,d} \cdot \nabla v \|_{L^{\frac{2d}{d+2}}(E_n)}   
		+ 
		4 n (1+\epsilon) \| e^{\varphi_{k_n}}  W_{2,d}    v \|_{L^{\frac{2d}{d+2}}(E_n)}  
		\\
		& \leq 
		4 \left(
		\|W_{1, d}\|_{L^d(E_n)} \| e^{\varphi_{k_n}} \nabla v \|_{L^2(E_n)}
		+n(1+ \epsilon) \|W_{2, d}\|_{L^d(E_n)} \| e^{\varphi_{k_n}} v \|_{L^2(E_n)}
		\right)
		\\
		& \leq 
		\frac{1}{2C_2} \min\left\{\frac{1}{|k_n| |E_n|^{\frac{1}{d}}} ,1\right\} \left( (1-\epsilon) n  \| e^{\varphi_{k_n}} v \|_{L^2(E_n)} +  \| e^{\varphi_{k_n}} \nabla v \|_{L^2(E_n)}\right) , 
		\end{align*}
		where we used \eqref{Everything-in-E}, and the fact that, from \eqref{Def-k-i-*-n}, 
		$$
		4 \| W_{1,d} \|_{L^d(E_n)} \leq \frac{1}{2C_2} \frac{1}{|k_n| |E_n|^{\frac{1}{d}}}, 
		\quad \text{ and } \quad 
		4 (1+\epsilon) \| W_{2,d} \|_{L^d(E_n)} \leq \frac{1-\epsilon}{2C_2} \frac{1}{|k_n| |E_n|^{\frac{1}{d}}} 
		$$
		and, from \eqref{Smallness-W-1-2}$_{(2)}$,
		$$
		4 \| W_{1,d} \|_{L^d(E_n)} \leq 4 \|W_{1,d}\|_{L^d(\Omega)} \leq \frac{1}{2C_2}, 
		\quad \text{ and } \quad 
		4 (1+\epsilon) \| W_{2,d} \|_{L^d(E_n)} \leq 4 (1+\epsilon) \| W_{2,d} \|_{L^d(\Omega)} \leq \frac{1-\epsilon}{2C_2}.
		$$
		
		Accordingly, from \eqref{Carleman-General-2-v-Local-terms}, we deduce that 
		\begin{multline}
		\label{Estimee-v-Local-terms-Wolff}
		|k_n|^{\frac{3}{4}+\frac{1}{2d}} 
		\left( 
		\| e^{\varphi_{k_n}}  W_{1,d} \cdot \nabla v \|_{L^{\frac{2d}{d+2}}(\Omega)}   
		+ 
		|k_n| \| e^{\varphi_{k_n}}  W_{2,d}    v \|_{L^{\frac{2d}{d+2}}(\Omega)}  
		\right) 
		\\
		\leq 
		2 C_2 
		\left( 
		\left(\| V_\infty\|_{L^\infty(\Omega)} +
		|k_n| \| W_{2, \infty}\|_{L^\infty(\Omega)} \right)
		\| e^{ \varphi_{k_n}} v \|_{L^{2} (\Omega)}
		+ 
		\| W_{1, \infty} \|_{L^\infty(\Omega)} \| e^{\varphi_{k_n}} \nabla v\|_{L^2(\Omega) }
		\right.
		\\
		\left.
		+ 
		\| e^{\varphi_{k_n}}  f_\eta \|_{L^2(\Omega)}
		+ 
		|k_n|^{\frac{7}{4}+\frac{1}{2d}} \| e^{\varphi_{k_n}} u \|_{H^1(\omega)}
		\right).
		\medskip
		\end{multline}
		
		\emph{Step 3: Combining the Carleman estimates \eqref{Carleman-General-1-v-k-OurPb} and \eqref{Carleman-General-2-v-k-OurPb}, and the estimate \eqref{Estimee-v-Local-terms-Wolff}.} Using \eqref{Carleman-General-1-v-k-OurPb}, \eqref{Carleman-General-2-v-k-OurPb} and \eqref{Estimee-v-Local-terms-Wolff}, we get that there exists a constant $C>0$ such that for all $n \geq \tau_0/(1- \epsilon)$, 
		\begin{multline}
		\label{Carleman-General-1-v-k-OurPb+Wolff-0}	
		|k_n|^{\frac{3}{2}} \| e^{\varphi_{k_n}}  v \|_{L^2(\Omega)} 
		+
		|k_n|^{\frac{1}{2}} \| e^{\varphi_{k_n}}  \nabla v \|_{L^2(\Omega)}	
		\leq 
		C \left( 
		\| e^{ \varphi_{k_n}}  f_\eta \|_{L^2(\Omega)}
		+ 
		|k_n|^{\frac{3}{2}} \| e^{ \varphi_{k_n}} u\|_{H^1(\omega)}
		\right.
		\\
		+ 
		\left(|k_n|^{\frac{1}{2}}\| W_{2,d} \|_{L^d(\Omega)} + |k_n|^{\frac{3}{4}-\frac{1}{2d}}   \| V_{\frac{d}{2}} \|_{L^{\frac{d}{2}}(\Omega)}  \right) \| e^{ \varphi_{k_n}}   v \|_{L^{\frac{2d}{d-2}}(\Omega)} 
		\\
		+
		|k_n|^{-\frac{1}{d}} 
		\left( 
		\left(\| V_\infty\|_{L^\infty(\Omega)} +
		|k_n| \| W_{2, \infty}\|_{L^\infty(\Omega)} \right)
		\| e^{ \varphi_{k_n}} v \|_{L^{2} (\Omega)}
		+ 
		\| W_{1, \infty} \|_{L^\infty(\Omega)} \| e^{\varphi_{k_n}} \nabla v\|_{L^2(\Omega) }
		\right.
		\\
		\left.
		+ 
		\| e^{\varphi_{k_n}}  f_\eta \|_{L^2(\Omega)}
		+ 
		|k_n|^{\frac{7}{4}+\frac{1}{2d}} \| e^{\varphi_{k_n}} u \|_{H^1(\omega)}
		\right),
		\end{multline}
		and
		\begin{multline}
		\label{Carleman-General-2-v-k-OurPb+Wolff}	
		|k_n|^{\frac{3}{4}+\frac{1}{2d}} 
		\| e^{\varphi_{k_n}} v \|_{L^{\frac{2d}{d+2}}(\Omega)}  
		\leq 
		C
		\Big( 
		\left(\| V_\infty\|_{L^\infty(\Omega)} +
		|k_n| \| W_{2, \infty}\|_{L^\infty(\Omega)} \right)
		\| e^{ \varphi_{k_n}} v \|_{L^{2} (\Omega)}
		\\	
		+ 
		\| W_{1, \infty} \|_{L^\infty(\Omega)} \| e^{\varphi_{k_n}} \nabla v\|_{L^2(\Omega) }
		+ 
		\| e^{\varphi_{k_n}}  f_\eta \|_{L^2(\Omega)}
		+ 
		|k_n|^{\frac{7}{4}+\frac{1}{2d}} \| e^{\varphi_{k_n}} u \|_{H^1(\omega)}
		\Big).
		\end{multline}
		
		Note in particular that, in view of the assumptions \eqref{Constraints-1}, we get from \eqref{Carleman-General-1-v-k-OurPb+Wolff-0} that there exists $\tau_1\geq \tau_0/(1-\epsilon)$ such that,  for all $n \geq \tau_1$, 
		\begin{multline}
		\label{Carleman-General-1-v-k-OurPb+Wolff}	
		|k_n|^{\frac{3}{2}} \| e^{\varphi_{k_n}}  v \|_{L^2(\Omega)} 
		+
		|k_n|^{\frac{1}{2}} \| e^{\varphi_{k_n}}  \nabla v \|_{L^2(\Omega)}	
		\leq 
		C \left( 
		\| e^{ \varphi_{k_n}}  f_\eta \|_{L^2(\Omega)}
		+ 
		|k_n|^{\frac{7}{4}+\frac{1}{2d}} \| e^{ \varphi_{k_n}} u\|_{H^1(\omega)}
		\right.
		\\
		\left.+ 
		\left(|k_n|^{\frac{1}{2}}\| W_{2,d} \|_{L^d(\Omega)} + |k_n|^{\frac{3}{4}-\frac{1}{2d}}   \| V_{\frac{d}{2}} \|_{L^{\frac{d}{2}}(\Omega)}  \right) \| e^{ \varphi_{k_n}}   v \|_{L^{\frac{2d}{d-2}}(\Omega)}\right).
		\end{multline}
		
		Thus, combining \eqref{Carleman-General-2-v-k-OurPb+Wolff} and \eqref{Carleman-General-1-v-k-OurPb+Wolff}, we obtain, for all $n \geq \tau_1$, 
		\begin{multline}
		\label{Carleman-OurPb+Wolff}	
		|k_n|^{\frac{3}{2}} \| e^{\varphi_{k_n}}  v \|_{L^2(\Omega)} 
		+
		|k_n|^{\frac{1}{2}} \| e^{\varphi_{k_n}}  \nabla v \|_{L^2(\Omega)}	
		\leq 
		C \left( 
		\| e^{ \varphi_{k_n}}  f_\eta \|_{L^2(\Omega)}
		+ 
		|k_n|^{\frac{7}{4}+\frac{1}{2d}} \| e^{ \varphi_{k_n}} u\|_{H^1(\omega)}
		\right.
		\\
		+ 
		\left(|k_n|^{\frac{1}{2}}\| W_{2,d} \|_{L^d(\Omega)} + |k_n|^{\frac{3}{4}-\frac{1}{2d}}   \| V_{\frac{d}{2}} \|_{L^{\frac{d}{2}}(\Omega)}  \right)
		|k_n|^{- \frac{3}{4} - \frac{1}{2d}} 
		\left(
		\left( \| V_\infty\|_{L^\infty(\Omega)} +
		|k_n| \| W_{2, \infty}\|_{L^\infty(\Omega)} \right)
		\| e^{ \varphi_{k_n}} v \|_{L^{2} (\Omega)}
		\right.
		\\
		\left.
		+ 
		\| W_{1, \infty} \|_{L^\infty(\Omega)} \| e^{\varphi_{k_n}} \nabla v\|_{L^2(\Omega) }
		+ 
		\| e^{\varphi_{k_n}}  f_\eta \|_{L^2(\Omega)}
		+ 
		|k_n|^{\frac{3}{2}} \| e^{\varphi_{k_n}} u \|_{H^1(\omega)}
		\right).
		\end{multline}
		With the constraints \eqref{Constraints-1}, \eqref{Constraints-2} and \eqref{Smallness-W-1-2}, there exists $C$ such that 
		\begin{equation*}
		\left(|k_n|^{\frac{1}{2}}\| W_{2,d} \|_{L^d(\Omega)} + |k_n|^{\frac{3}{4}-\frac{1}{2d}}   \| V_{\frac{d}{2}} \|_{L^{\frac{d}{2}}(\Omega)}  \right)
		|k_n|^{- \frac{3}{4} - \frac{1}{2d}} 
		\leq
		C |k_n|^{-\frac{1}{d}}, 
		\end{equation*}
		and 
		\begin{equation*}	
		\left( \| V_\infty\|_{L^\infty(\Omega)} +
		|k_n| \| W_{2, \infty}\|_{L^\infty(\Omega)} \right)
		\leq 2 c_0 |k_n|^{\frac{3}{2}}, 
		\quad \text{ and } \quad 
		\| W_{1, \infty} \|_{L^\infty(\Omega)}
		\leq c_0 |k_n|^{\frac{1}{2}}.
		\end{equation*}
		Accordingly, we deduce from \eqref{Carleman-OurPb+Wolff} that there exists $\tau_2 \geq \tau_1$ such that for all $n \geq \tau_2$, 
		\begin{equation}
		\label{Final-Est-Carl+Wolff}	
		|k_n|^{\frac{3}{2}} \| e^{\varphi_{k_n}}  v \|_{L^2(\Omega)} 
		+
		|k_n|^{\frac{1}{2}} \| e^{\varphi_{k_n}}  \nabla v \|_{L^2(\Omega)}	
		\leq 
		C \left( 
		\| e^{ \varphi_{k_n}}  f_\eta \|_{L^2(\Omega)}
		+ 
		|k_n|^{\frac{7}{4}+\frac{1}{2d}} \| e^{ \varphi_{k_n}} u\|_{H^1(\omega)}
		\right).
		\medskip
		\end{equation}

		\textit{Step 4: Quantification.} 
		To quantify the unique continuation property, we simply need to choose appropriate values for $n$ (recall that $k_n$ is of the order of $n$) and suitable decompositions of $V$, $W_1$ and $W_2$ as $V_{\frac{d}{2}} + V_d + V_\infty$, $W_{1,d} + W_{1 \infty}$, $W_{2,d} + W_{2 \infty}$.
		
		We thus recall the constraints needed so far (see \eqref{Constraints-1}, \eqref{Constraints-2}, \eqref{Smallness-W-1-2}), which we sum up as follows: 
		\begin{align}
		&
		\|V_{\frac{d}{2}} \|_{L^{\frac{d}{2}}(\Omega)} \lll 1, \qquad  |n|^{\frac{3}{4}-\frac{1}{2d}} \|V_d \|_{L^d(\Omega)} \lll n^{\frac{3}{2}}, 	\qquad   \|V_\infty \|_{L^\infty(\Omega)} \lll n^{\frac{3}{2}},\label{Constraints-for-V}
		\\
		&  \| W_{1, d}\|_{L^d(\Omega)} \lll 1, \qquad 
		\| W_{1, \infty}\|_{L^\infty(\Omega)} \lll n^{\frac{1}{2}}, \label{Constraints-for-W-1}
		\\
		&  \| W_{2, d}\|_{L^d(\Omega)} \lll 1, \qquad 
		\| W_{2, \infty}\|_{L^\infty(\Omega)} \lll n^{\frac{1}{2}}.  \label{Constraints-for-W-2}
		\end{align}
		
		\noindent \textit{Satisfying conditions \eqref{Constraints-for-W-1}--\eqref{Constraints-for-W-2}.}
		For $W_1 \in L^{q_1}(\Omega)$ and $W_2 \in L^{q_2}(\Omega)$, with $q_1$ and $q_2$  in $[d,\infty]$, and for positive numbers $\lambda_1,$ $\lambda_2$ yet to be determined, we set $W_{1,d} = W_1 1_{|W_1| > \lambda_1}$, $W_{1, \infty} = W_1 1_{|W_1| \leq \lambda_1}$, $W_{2,d} = W_2 1_{|W_2| > \lambda_2}$, and $W_{2, \infty} = W_2 1_{|W_2| \leq \lambda_2}$. Conditions  \eqref{Constraints-for-W-1}--\eqref{Constraints-for-W-2} then read:
		\begin{align*} 
		& \lambda_{1}^{1-\frac{q_1}{d}} \| W_{1}\|_{L^{q_1}(\Omega)}^{\frac{q_1}{d}} \lll 1, \qquad 
		\lambda_1 n^{-\frac{1}{2}} \lll 1
		\\
		& \lambda_{2}^{1-\frac{q_1}{d}} \| W_{2}\|_{L^{q_2}(\Omega)}^{\frac{q_1}{d}} \lll 1, \qquad 
		\lambda_2 n^{-\frac{1}{2}}\lll 1.
		\end{align*}
		If we choose $\lambda_1$ and $\lambda_2$ such that $ \lambda_{j}^{1-\frac{q_j}{d}} \| W_{j}\|_{L^{q_j}(\Omega)}^{\frac{q_j}{d}} =  \lambda_j n^{-\frac{1}{2}}$ ($j \in \{1, 2\}$), that is 
		\begin{align*}
		\lambda_1 = \|W_1\|_{L^{q_1}(\Omega)} n^{\frac{d}{2q_1}} \ \text{  and   } \ \lambda_2 = \|W_2\|_{L^{q_2}(\Omega)} n^{\frac{d}{2q_2}},
		\end{align*}
		then this yields  the conditions
		\begin{align}\label{quantified-condition-W_1-W_2-q1-q_2-[d-infty[}
		n^{\frac{1}{2} - \frac{d}{2q_1}} \ggg \|W_1\|_{L^{q_1}(\Omega)} ,  \quad \text{ and } \quad 
		n^{\frac{1}{2} - \frac{d}{2q_2}} \ggg \|W_2\|_{L^{q_2}(\Omega)}.    
		\end{align} 
		
		\noindent \textit{Satisfying conditions \eqref{Constraints-for-V}.}
		Now, we consider the following two cases for the potential $V$: 
		
		\textit{Case $V \in L^{q_0}(\Omega)$ with $q_0 \in [d,\infty]$.} For $\lambda_0>0$ to be chosen later, we set $V_{\frac{d}{2}} = 0$, and $V_d = V 1_{|V|>\lambda_0}$,  $V_\infty  =  V 1_{|V| \leq \lambda_0}$, so that the conditions \eqref{Constraints-for-V} read
		\begin{align*} 
		\lambda_0^{1 -\frac{q_0}{d}}  \|V\|_{L^{q_0}(\Omega)}^{\frac{q_0}{d}} \lll n^{\frac{3}{4}+\frac{1}{2d}}, 	\qquad 
		\lambda_0  \lll n^{\frac{3}{2}}. 
		\end{align*}
		With the choice $\lambda_0= \|V\|_{L^{q_0}(\Omega)}^{\frac{q_0}{d}}n^{(\frac{3}{4} - \frac{1}{2d})\frac{d}{q_0}} $,  this gives 
		\begin{align}\label{quantified-condition-V-q0-[d-infty[}
		n^{(2 - \frac{d}{q_0}) (\frac{3}{4}-\frac{1}{2d})}
		\ggg
		\|V \|_{L^{q_0}(\Omega)} .
		\end{align}
		%
		
		\textit{Case $V\in L^{q_0}(\Omega)$ with $q_0 \in (d/2, d]$.}
		For $\lambda_0 >0$ to be determined later, we set $V_\infty=0$, and $ V_{\frac{d}{2}} = V 1_{|V| > \lambda_0}, \  V_{d} = V 1_{|V| \leq \lambda_0},$ so that  the conditions \eqref{Constraints-for-V} read
		\begin{align*}
		&
		\lambda_0^{1- \frac{2q_0}{d} } \|V\|_{L^{q_0}}^{\frac{2q_0}{d}} \lll 1, \qquad 
		\lambda_0^{1 -\frac{q_0}{d}}  \|V \|_{L^{q_0}}^{\frac{q_0}{d}} \lll n^{\frac{3}{4}+\frac{1}{2d}}.  
		\end{align*}
		With the choice $\lambda_0 = \| V\|_{L^{q_0}} n^{(\frac{3}{4} + \frac{1}{2d})\frac{d}{q_0}}$, this gives
		\begin{align}\label{quantified-condition-V-q0-[d/2-d]}
		n^{(2 - \frac{d}{q_0})(\frac{3}{4}+\frac{1}{2d})} \ggg \|V\|_{L^{q_0}(\Omega)}.   
		\end{align} 
		
		In the following, we assume that the conditions \eqref{quantified-condition-W_1-W_2-q1-q_2-[d-infty[}--\eqref{quantified-condition-V-q0-[d/2-d]} are satisfied, that is, with the notations \eqref{Conditions-gamma(q)-delta(q)},
		$$
		n \geq \tau_3(V, W_1, W_2) : = C\left(1+ \| V\|_{L^{q_0}(\Omega)}^{\gamma(q_0)} + \| W_1\|_{L^{q_1}(\Omega)}^{\delta(q_1)} + \| W_2\|_{L^{q_2}(\Omega)}^{\delta(q_2)}\right),
		$$
		for some sufficiently large $C$, 
		so that in particular the estimate \eqref{Final-Est-Carl+Wolff} holds for all $n \geq \tau_3(V, W_1, W_2)$.
		\medskip
		
		{\it Step 5. Getting a stability estimate.} We start by estimating the term $\| e^{ \varphi_{k_n}}  f_\eta \|_{L^2(\Omega)}$ as follows (recall \eqref{def_f-eta}--\eqref{Support-f-eta}): for $n \geq \tau_3(V, W_1, W_2)$,
		\begin{align*}
		\| e^{ \varphi_{k_n}}  f_\eta \|_{L^2(\Omega)}
		&\leq
		C \left(1 + \|W_1\|_{L^{q_1}(\Omega)} + \|W_2\|_{L^{q_2}(\Omega)} \right)
		\| e^{ \varphi_{k_n}}  u \|_{H^1(\omega)}
		\\
		&\quad+
		C \left(1 + \|W_1\|_{L^{q_1}(\Omega)} + \|W_2\|_{L^{q_2}(\Omega)} \right)
		e^{\sup_{x_1 \in \left(-\frac{7R}{24}, -\frac{R}{4}\right)}\{ \varphi_{k_n}\} }  \| u \|_{H^1(\Omega)}
		\\
		&\leq C |n|^{\frac{1}{2}}  \| e^{ \varphi_{k_n}}  u \|_{H^1(\omega)}
		+ 
		C |n|^{\frac{1}{2}} e^{\sup_{x_1 \in \left(-\frac{7R}{24}, -\frac{R}{4}\right)}\{ \varphi_{k_n}\} }  \| u \|_{H^1(\Omega)},
		\end{align*}	
		where we used the localization properties of the gradient of the cut-off function $\eta$ and the bound  \eqref{quantified-condition-W_1-W_2-q1-q_2-[d-infty[}.
		
		Bounding the weight function $e^{\varphi_{k_n}}$ from above and from below in \eqref{Final-Est-Carl+Wolff}, we get for all $n \geq \tau_3(V, W_1, W_2)$ such that 
		\begin{equation*}
		\label{Final-Est-Carl+Wolff-2}	
		e^{\inf_{O} \{\varphi_{k_n}\} } \|   v \|_{H^1(\mathcal{O})} 
		\leq 
		C
		|n|^{\frac{5}{4}+\frac{1}{2d}}
		e^{ \sup_{\omega} \{\varphi_{k_n}\} }\| u\|_{H^1(\omega)}
		+
		C e^{\sup_{x_1 \in \left(-\frac{7R}{24}, -\frac{R}{4}\right)}\{ \varphi_{k_n}\} }  \| u \|_{H^1(\Omega)}.
		\end{equation*}
		Using then properties \eqref{Weights-Comparison}, we deduce that there exist two positive constants $A$ and $B$ such that for all $n \geq \tau_3(V, W_1, W_2)$, 
		$$
		\|   u \|_{H^1(\mathcal{O})} 
		\leq 
		C 
		e^{ A n }\| u\|_{H^1(\omega)}
		+
		C e^{- Bn}  \| u \|_{H^1(\Omega)}.
		$$
		Optimizing the right hand side with respect to $n \geq \tau_3(V, W_1, W_2)$, we obtain 
		$$
		\|   u \|_{H^1(\mathcal{O})} 
		\leq
		C \| u\|_{H^1(\omega)}^{\frac{B}{A+B} } \|u\|_{H^1(\Omega)}^{\frac{A}{A+B}} \exp(C \tau_3(V, W_1,W_2)).
		$$
		This concludes the proof of Lemma \ref{lemma-a-specific-geomertric-setting}. 
	\end{proof}
	
	\begin{remark}
		\label{Rem-Sum-of-Pot}
		It is clear from the above proof that, if $V$ is the finite sum of potentials $V_i \in L^{p_i}(\Omega)$, the estimate \eqref{Quantitative-0} still holds by replacing 
		$
		\| V \|_{L^p(\Omega)}^{\gamma(p)}
		$
		by 
		$
		\sum_i \norm{V_i}^{\gamma(p_i)}_{L^{p_i}(\Omega)}.
		$
	\end{remark}

	%
	%
	\section{Other geometries and proof of Theorem \ref{Thm-QUCP}}\label{Other geometries and proof of Theorem 1.1}
	This section is devoted to the proof of Theorem \ref{Thm-QUCP}. We will do that using several geometrical settings, up to a quantitative three balls estimate.
	
	%
	%
	
	\subsection{An annulus observed from a neighbourhood of its external boundary}
	
	\begin{lemma} \label{Lemma A ball observed from a neighbourhood of its boundary} Let $R>0$  and $d \geqslant 3$. We consider the following geometric setting (see Figure \ref{geometry of a ball observed from a neighbourhood of its boundary}):
		\begin{align*}  
		\Omega = \A_0\left(\frac{R}{4},2R\right), \quad    \O = \A_0\left(\frac{R}{2},2R\right), \quad   \omega =  \A_0\left(R,2R\right). 
		\end{align*}
		(Here, $\A_0(r_1, r_2)$ denotes the annulus $B_0(r_2) \setminus B_0(r_1)$.)
		
		There exist constants $C=C(R,d)>0$ and $\alpha \in (0,1)$ depending only on $R$ and $d$ so that any solution $u\in H^{1}(\Omega)$ of \eqref{Elliptic-UC} with $(V,W_1,W_2)$ as in \eqref{potential-assumptions} satisfies the quantitative unique continuation estimate \eqref{Quantitative-0} with $\gamma$  and $\delta$ as in \eqref{Conditions-gamma(q)-delta(q)}. 
	\end{lemma}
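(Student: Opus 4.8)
The strategy is to reduce the annular geometry of Lemma~\ref{Lemma A ball observed from a neighbourhood of its boundary} to the ``half-ball'' configuration already treated in Lemma~\ref{lemma-a-specific-geomertric-setting} by a covering/chaining argument. The point is that Lemma~\ref{lemma-a-specific-geomertric-setting} gives, after an affine change of variables (translation and rotation), a quantitative three-domain estimate valid in \emph{any} congruent copy of the configuration
$$
\Omega_0 = B_0(2R)\cap\{x_1<-\tfrac{R}{4}\},\quad \mathcal{O}_0 = B_0(\tfrac{3R}{2})\cap\{x_1<-\tfrac{R}{3}\},\quad \omega_0 = (B_0(2R)\setminus B_0(R))\cap\{x_1<-\tfrac{R}{4}\},
$$
with constants $C$ and $\alpha$ depending only on $R$ and $d$ (the elliptic equation and the potential norms are invariant under isometries, so nothing is lost). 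First I would fix a radius $r>0$ small compared to $R$ and observe that a translated-rotated copy of $(\Omega_0,\mathcal{O}_0,\omega_0)$ scaled by $r/(2R)$ fits inside $\Omega = \A_0(R/4,2R)$ in such a way that: its ``observation region'' $\omega_0$ lies in a set where one already controls $u$, and its ``conclusion region'' $\mathcal{O}_0$ is a small ball-cap that has moved a definite distance \emph{inward}, i.e.\ toward $B_0(R/4)$. Iterating this finitely many times (the number of steps depending only on $R$ and $d$), one propagates the smallness of $u$ from the outer annular shell $\omega = \A_0(R,2R)$ across $\O = \A_0(R/2,2R)$; because each elementary step is a three-domain estimate of the form \eqref{Quantitative-0}, composing them preserves the exponential-of-potential-norms prefactor (the exponents only get multiplied by the number of steps, hence by a constant) and yields a new exponent $\alpha'\in(0,1)$.

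More precisely, the chaining works as follows. Cover a neighbourhood of the inner shell by finitely many overlapping balls $B_1,\dots,B_N$ (with $N=N(R,d)$), ordered so that $B_1$ sits in $\omega$, consecutive balls overlap in an open set, and $B_N$ reaches into $\A_0(R/4,R/2)$. For each consecutive pair I apply the congruent-copy version of Lemma~\ref{lemma-a-specific-geomertric-setting} with $\omega_0$ placed in the overlap region (where $u$ is already controlled from the previous step) and $\mathcal{O}_0$ placed so as to cover the next ball. Denoting by $N_j := \|u\|_{H^1(B_j)}$ and by $M:=\|u\|_{H^1(\Omega)}$, Lemma~\ref{lemma-a-specific-geomertric-setting} gives $N_{j+1}\le C\,e^{C\Theta}\,N_j^{\alpha}M^{1-\alpha}$, where $\Theta = \|V\|_{L^{q_0}}^{\gamma(q_0)}+\|W_1\|_{L^{q_1}}^{\delta(q_1)}+\|W_2\|_{L^{q_2}}^{\delta(q_2)}$. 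A straightforward induction on $j$ (using $N_j\le M$ to bound mixed terms) yields $N_N \le C\,e^{C\Theta}\,N_1^{\alpha^{N}}M^{1-\alpha^{N}}$, and since $N_1 \le C\|u\|_{H^1(\omega)}$, we obtain $\|u\|_{H^1(\A_0(R/4,R/2))}\le C e^{C\Theta}\|u\|_{H^1(\omega)}^{\alpha^N}\|u\|_{H^1(\Omega)}^{1-\alpha^N}$. Adding the trivial bound $\|u\|_{H^1(\omega)}\le\|u\|_{H^1(\Omega)}$ on the remaining part $\A_0(R/2,2R)\setminus\A_0(R/4,R/2)$ of $\O$, and using interpolation to absorb the lower power of $M$, gives \eqref{Quantitative-0} with $\alpha$ replaced by $\alpha^{N}\in(0,1)$ and $C$ enlarged by a factor depending only on $N$, hence only on $R$ and $d$.

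The main technical point — and the only place requiring care rather than routine composition — is the \emph{geometric bookkeeping}: one must check that a suitably scaled isometric copy of $(\Omega_0,\mathcal{O}_0,\omega_0)$ can be placed inside $\Omega=\A_0(R/4,2R)$ so that at the first step $\omega_0\subset\omega=\A_0(R,2R)$, at the last step $\mathcal{O}_0$ protrudes into $\A_0(R/4,R/2)$, and at every step the new $\mathcal{O}_0$ overlaps the next step's $\omega_0$, all while each copy stays within $\Omega$. This is elementary but must be done explicitly, e.g.\ by taking the copies centred along a radial segment with a fixed small radial increment and with the ``flat side'' $\{x_1<-cR\}$ of $\Omega_0$ oriented toward the origin; the finiteness of the number of steps then follows from the radial gap $R/2-R/4=R/4$ being covered in increments bounded below by a constant multiple of $r$. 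I would also note, via Remark~\ref{Rem-Sum-of-Pot}, that if $V$ is a finite sum of potentials the same argument applies with $\|V\|_{L^{q_0}}^{\gamma(q_0)}$ replaced by the corresponding sum, which will be convenient when Theorem~\ref{Thm-QUCP} is assembled from these annular lemmas.
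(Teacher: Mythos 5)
Your approach (chaining rescaled, translated copies of the configuration of Lemma~\ref{lemma-a-specific-geomertric-setting}) is genuinely different from the paper's, but it has a fatal geometric gap: small translated copies of that configuration cannot make radial progress inside the annulus. Write the model configuration at scale $R'$, centred at $c$ with axis $e$, so that the observation set is the annular cap $\omega_0^{c,e}=(B_c(2R')\setminus B_c(R'))\cap\{(x-c)\cdot e<-R'/4\}$ and the conclusion set is $\O_0^{c,e}=B_c(3R'/2)\cap\{(x-c)\cdot e<-R'/3\}$. A direct computation of support functions shows that for \emph{every} unit vector $w$ one has $\sup_{\O_0^{c,e}}x\cdot w\le\sup_{\omega_0^{c,e}}x\cdot w-R'/12$ (the worst direction is $w=e$, where the two cutting planes differ by $R'/3-R'/4$); that is, $\O_0^{c,e}$ sits inside the convex hull of $\omega_0^{c,e}$ with a definite margin in every direction. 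Since $|x|=|c|+(x-c)\cdot\frac{c}{|c|}+O(R'^2/|c|)$ on the copy, this yields $\inf_{\O_0^{c,e}}|x|\ge\inf_{\omega_0^{c,e}}|x|+R'/12-O(R'^2/|c|)$, which is $\ge\inf_{\omega_0^{c,e}}|x|$ as soon as $R'\ll|c|$, \emph{whatever the orientation $e$}. Consequently, if at some stage of your chain the controlled set is contained in $\{|x|\ge\rho\}$, the observation cap of the next copy must lie in $\{|x|\ge\rho\}$, and then so does its conclusion region: the minimal controlled radius never drops below its initial value $R$, and no finite chain of small copies ever reaches $\A_0(R/2,R)$. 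The inward gain in Lemma~\ref{lemma-a-specific-geomertric-setting} is a large-scale, curvature-driven phenomenon that is destroyed by rescaling and translating.

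The paper avoids this by applying Lemma~\ref{lemma-a-specific-geomertric-setting} once per direction, at full scale and \emph{concentrically}: for each $x_0\in\S^{d-1}$ it takes $\Omega_{x_0}=B_0(2R)\cap\{x\cdot x_0<-R/4\}$, which is automatically contained in $\A_0(R/4,2R)$ because $x\cdot x_0<-R/4$ forces $|x|>R/4$, and whose observation region lies in $\A_0(R,2R)=\omega$; the conclusion regions $B_0(3R/2)\cap\{x\cdot x_0<-R/3\}$ cover $\A_0(R/2,3R/2)$ as $x_0$ ranges over the sphere. Squaring the resulting inequality and integrating over $x_0\in\S^{d-1}$ (the constant is independent of $x_0$ by rotation invariance) gives the claim, the remaining shell $\A_0(3R/2,2R)$ being inside $\omega$. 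If you want to keep a chaining argument, you must first convert Lemma~\ref{lemma-a-specific-geomertric-setting} into a genuine three-balls inequality whose observation set is an interior ball --- which is what the paper does only afterwards, in Lemma~\ref{lemma Three spheres estimate}, via a Kelvin transform --- and chain that instead. (A minor additional slip: the region you need to cover is $\O\setminus\omega=\A_0(R/2,R)$, not $\A_0(R/4,R/2)$, which is not even contained in $\O$.)
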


	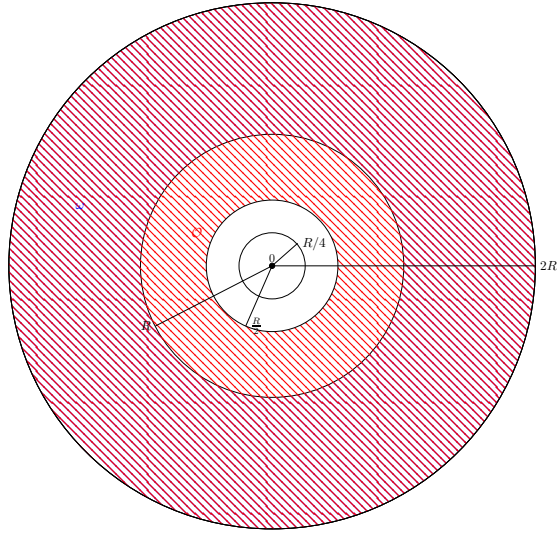
\begin{figure}[!ht]
		\centering
		\scalebox{0.5}{\begin{tikzpicture}
			\draw [pattern=north west lines, opacity=.5, pattern color=blue]  (0,0) circle (7cm) ;
			\draw (0,0) circle (7cm);
			\draw (0,0) circle (0.88cm);
			\draw [fill=white] (0,0) circle (3.5cm) ;
			\draw [pattern=north west lines, opacity=.5, pattern color=red]  (0,0) circle (7cm) ;
			\draw [fill=white] (0,0) circle (1.75cm) ;
			\node at ( -2,0.9) {${\color{red} \O}$};	
			\node at ( -5.1,1.6) {${\color{blue} \omega}$};	
			\filldraw[black] (0,0) circle (2pt) node[anchor=south,scale=0.9]{$0$};
			\draw (0,0) circle (0.88cm);
			\draw(0,0)--(-3.1,-1.6)  node[left]{$R$};
			\draw (0,0)--(7,0)node[right]{$2 R$};
			\draw (0,0)--(0.68,0.6)node[right]{$ R/4$};
			\draw(0,0)--(-0.7,-1.6)  node[above,right] {$\frac{R}{2}$};
			\end{tikzpicture}}
		\caption{The geometric setting of Lemma \ref{Lemma A ball observed from a neighbourhood of its boundary}: Propagation of smallness from a neighborhood of a sphere to its interior.} 
		\label{geometry of a ball observed from a neighbourhood of its boundary}
	\end{figure}
	\begin{proof} 
		The proof of Lemma \ref{Lemma A ball observed from a neighbourhood of its boundary} directly follows from Lemma  \ref{lemma-a-specific-geomertric-setting}. Let $x_0 \in \S^{d-1}$, and apply Lemma \ref{lemma-a-specific-geomertric-setting} with 
		$$
		\Omega_{x_0} = B_0(2R) \cap \left\{ x \cdot x_0 < -\frac{R}{4}\right\},  
		\quad 
		\O_{x_0} = B_0\left(\frac{3R}{2}\right) \setminus \left\{ x \cdot x_0 < -\frac{R}{3}\right\},
		\quad 
		\omega_{x_0} = (B_0(2R) \setminus B_0(R)) \cap
		\Omega_{x_0}.
		$$
		Accordingly, for all $x_0 \in \S^{d-1}$, there exists a constant $C_{x_0}>0$ such that 
		\begin{align*}
		\norm{u }_{H^1(\O_{x_0}))} 
		& \leqslant 
	C_{x_0} 
		e^{
			C_{x_0} \left(\norm{V}_{L^{q_0}(\Omega)}^{\gamma(q_0)}+\norm{W_1}_{L^{q_1}(\Omega)}^{\delta(q_1)}
			+ \norm{W_2}_{L^{q_2}(\Omega)}^{\delta(q_2)}
			\right)
		}\norm{u}_{H^1(\omega_{x_0})}^\alpha
		\norm{u}_{H^1(\Omega_{x_0})}^{1-\alpha}
		\\
		&\leq C_{x_0} 
		e^{
			C_{x_0}\left(\norm{V}_{L^{q_0}(\Omega)}^{\gamma(q_0)}+\norm{W_1}_{L^{q_1}(\Omega)}^{\delta(q_1)}
			+ \norm{W_2}_{L^{q_2}(\Omega)}^{\delta(q_2)}
			\right)
		}\norm{u}_{H^1(\omega)}^\alpha
		\norm{u}_{H^1(\Omega)}^{1-\alpha}
		.   
		\end{align*}
		The constant $C_{x_0}$ is in fact independent of $x_0$ due to the invariance by rotation of the problem. Accordingly, we simply denote it by $C$ in the following. Consequently, the right-hand side of the previous estimate does not depend on $x_0$. Accordingly, taking the square and integrating this inequality with respect to $x_0$ over the sphere $\S^{d-1}$, we obtain an estimate on 
		\begin{align*}
		\int_{x\in  B_0\left(\frac{3R}{2}\right) \setminus B_0\left(\frac{R}{3}\right) } (|u|^2 +|\nabla u|^2 ) \rho_{R}(x) d x,
		\end{align*}
		where $\rho_{R}(x)=\int_{x_0 \in \S^{d-1}} 1_{x\cdot x_0 < -\frac{R}{3}}(x_0) d x_0$. It is then easy to check that $\rho_{R}$ is a radial function, vanishing for $|x| \in (0,R/3)$, and increasing. Consequently, we derive 
		$$
		\| u \|_{H^1\left(B_0\left(\frac{3R}{2}\right)\setminus B_0\left(\frac{R}{2}\right)\right)} 
		\leq 
		\frac{C}{\sqrt{\rho_R(\frac{R}{2})}} 
		e^{
			C \left(\norm{V}_{L^{q_0}(\Omega)}^{\gamma(q_0)}+\norm{W_1}_{L^{q_1}(\Omega)}^{\delta(q_1)}
			+ \norm{W_2}_{L^{q_2}(\Omega)}^{\delta(q_2)}
			\right)
		}\norm{u}_{H^1(\omega)}^\alpha
		\norm{u}_{H^1(\Omega)}^{1-\alpha}
		.   
		$$
		The estimate on $u$ in $H^1(B_0 (2R) \setminus B_0({3R/2}))$ is straightforward since $B_0 (2R) \setminus B_0(3R/2) \subset \omega \subset \Omega$.
	\end{proof}
	%
	%
	
	\subsection{A three balls estimate}
	
	In this part, we prove a quantitative three balls inequality:  
	\begin{lemma}[Three balls estimate] \label{lemma Three spheres estimate} Let $R>0$ and $d \geqslant 3$. 
		We consider the following geometric setting (see Figure \ref{geometry of Three spheres}): 
		\begin{align*}
		\Omega = B_0(4 R), \, \mathcal{O} = B_0(2 R), \text{ and } \omega = B_0(R); 
		\end{align*}
		
		Then there exist constants $C=C(R,d)>0$ and $\alpha \in (0,1)$ depending only on $R$ and $d$ so that any solution $u\in H^{1}(\Omega)$ of \eqref{Elliptic-UC} with $(V,W_1,W_2)$ as in \eqref{potential-assumptions} satisfies the quantitative unique continuation estimate \eqref{Quantitative-0} with $\gamma$ and $\delta$ as in \eqref{Conditions-gamma(q)-delta(q)}.  
	\end{lemma}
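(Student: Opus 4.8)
The plan is to deduce this concentric three balls inequality purely by \emph{chaining} the two model results already established—Lemma~\ref{Lemma A ball observed from a neighbourhood of its boundary} (inward propagation inside an annulus) and Lemma~\ref{lemma-a-specific-geomertric-setting} (propagation across a flat cut of a ball)—without redoing any Carleman analysis. I would use that the class of equations \eqref{Elliptic-UC} is invariant under isometries and dilations of $\R^d$, and that both model lemmas hold, with constants depending only on their radius and on $d$, at every scale. Since $B_0(2R)=B_0(R)\cup\A_0(R,2R)$ and $\|u\|_{H^1(B_0(R))}\leq\|u\|_{H^1(\Omega)}$, it suffices to bound $\|u\|_{H^1(\A_0(R,2R))}$ by the right-hand side of \eqref{Quantitative-0}.

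First I would record a \emph{local outward step}: by translation, rotation, and a dilation of ratio comparable to $1$, Lemma~\ref{Lemma A ball observed from a neighbourhood of its boundary} (applied with a radius $r$ comparable to $R$ and centred at a point $c$ placed on the far side of the target) gives that, whenever $B_c(2r)\subset B_0(4R)$, a solution of \eqref{Elliptic-UC} satisfies $\|u\|_{H^1(\A_c(r/2,2r))}\leq C\,e^{C\Theta}\,\|u\|_{H^1(\A_c(r,2r))}^{\alpha_0}\,\|u\|_{H^1(\Omega)}^{1-\alpha_0}$, with $\Theta=\|V\|_{L^{q_0}(\Omega)}^{\gamma(q_0)}+\|W_1\|_{L^{q_1}(\Omega)}^{\delta(q_1)}+\|W_2\|_{L^{q_2}(\Omega)}^{\delta(q_2)}$, some $\alpha_0\in(0,1)$ depending only on $d$, and $C=C(R,d)$. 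The point is that placing the centre $c$ beyond the target turns the inward propagation of Lemma~\ref{Lemma A ball observed from a neighbourhood of its boundary} into an outward one: each such application moves smallness outward by a fixed fraction of $r$, while the ambient annulus $\A_c(r/4,2r)$ still fits inside $B_0(4R)$ provided $r$ is a small enough fixed multiple of $R$. Here I use that restricting the $L^{q}$-norms of the potentials to the (smaller, translated) model domain only decreases them, so they are controlled by their norms on $\Omega$; that a dilation of bounded ratio does not change the exponents $\gamma,\delta$ of \eqref{Conditions-gamma(q)-delta(q)} (only the constant $C$); and that $\|u\|_{H^1(B_c(\cdot))}\leq\|u\|_{H^1(\Omega)}$. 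If some relevant shell should meet a boundary-type configuration rather than a full annulus, one may use Lemma~\ref{lemma-a-specific-geomertric-setting} in exactly the same way, although for concentric balls this is never needed.

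Then I would chain these local steps. Fixing a finite set of directions whose associated cones cover $\S^{d-1}$ (finitely many, depending only on $d$), for each direction I would iterate the local outward step a bounded number $N=N(d)$ of times, starting from the shell $\A_0(R/2,R)\subset B_0(R)$, to reach an annular cap covering the origin-centred radii in $[R,2R]$ about that direction. At each composition I would use $\|u\|_{H^1(\cdot)}\leq\|u\|_{H^1(\Omega)}$ together with the elementary composition of interpolation inequalities to preserve the two-factor structure; the finitely many constants and the finitely many powers $\alpha_0$ then combine into a single $C=C(R,d)$ and a single exponent $\alpha=\alpha_0^{\,N}\in(0,1)$, and the finitely many factors $e^{C\Theta}$ multiply to another $e^{C\Theta}$. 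Summing the squares of the resulting estimates over the chosen directions covers $\overline{\A_0(R,2R)}$, hence $B_0(2R)=\O$, which is exactly \eqref{Quantitative-0} for the present geometry, with $C$ and $\alpha$ depending only on $R$ and $d$.

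I expect the only real work—and the place where care is needed—to be the elementary geometry of the chaining: selecting the radii $r$, the centres $c$, the number of steps $N$ and the number of directions so that every intermediate annulus stays inside $B_0(4R)$ while the chain still reaches every point of $B_0(2R)$, and then checking that composing the local estimates leaves the exponents $\gamma(q_0),\delta(q_1),\delta(q_2)$ and the overall form of \eqref{Quantitative-0} untouched, with constants depending only on $R$ and $d$. Everything else—invariance of \eqref{Elliptic-UC} under isometries and dilations, monotonicity of the $H^1$- and $L^{q}$-norms under shrinking of the domain, and the iterated composition of interpolation inequalities—is routine.
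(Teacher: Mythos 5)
There is a genuine gap: your ``local outward step'' does not follow from Lemma~\ref{Lemma A ball observed from a neighbourhood of its boundary}. That lemma takes as observation region a \emph{full} spherical shell $\A_c(r,2r)$ surrounding the target and propagates smallness \emph{inward}, into the region that the shell encloses. For the resulting estimate to be useful in your chain, the whole shell $\A_c(r,2r)$ must lie inside the set where smallness is already known, which at the start is the convex set $B_0(R)$. But then its convex hull $B_c(2r)$, and hence the conclusion region $\A_c(r/2,2r)\subset B_c(2r)$, also lies inside $B_0(R)$, so the step produces no new information. Placing $c$ ``on the far side of the target'' does not help: the part of the shell $\A_c(r,2r)$ lying beyond $c$ is then even farther from the origin, i.e.\ precisely where $u$ is not yet known to be small. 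The obstruction is structural: in both Lemma~\ref{lemma-a-specific-geomertric-setting} and Lemma~\ref{Lemma A ball observed from a neighbourhood of its boundary} the conclusion set $\O$ is contained in the convex hull of the observation set $\omega$, so any finite chain of applications starting from the convex set $\omega=B_0(R)$ can never establish smallness at a single point outside $B_0(R)$. Your chaining therefore cannot reach $\A_0(R,2R)$.

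The missing idea is a mechanism that reverses the direction of propagation, and this is exactly what the paper supplies: the conformal reflection (Kelvin transform) $T(x)=R^2x/|x|^2$, which maps $\A_0(R/2,4R)$ onto $\A_0(R/4,2R)$ exchanging inner and outer boundaries, sends $\A_0(R/2,R)$ (contained in the known-small ball $B_0(R)$) onto the \emph{outer} shell $\A_0(R,2R)$, and transforms solutions of \eqref{Elliptic-UC} into solutions of an equation of the same class with potentials of comparable $L^{q}$ norms (via $\tilde u_R(x)=(R/|x|)^{d-2}u(R^2x/|x|^2)$). One then applies Lemma~\ref{Lemma A ball observed from a neighbourhood of its boundary} to $\tilde u_R$ and pulls the estimate back. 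Without this inversion (or some equivalent device that genuinely propagates outward), the rest of your argument --- the scaling, the covering by finitely many directions, and the composition of interpolation inequalities --- has nothing to iterate.
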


	\begin{figure}[ht]
		\centering
		\scalebox{0.6}{\begin{tikzpicture}
			\draw  (0,0) circle (5cm);
			\draw [pattern=north west lines, opacity=.5, pattern color=red]  (0,0) circle (3cm) ;
			\draw [pattern=north west lines, opacity=.5, pattern color=blue] (0,0) circle (1.5cm) ;
			\node at ( -2.3,2.6) {${\color{red} \O}$};	
			\filldraw[black] (0,0) circle (2pt) node[anchor=south,scale=0.9]{$0$};
			
			\draw(0,0)--(-2.6,-1.5)  node[left]{$2 R_0$};
			\draw (0,0)--(5,0)node[right]{$4 R_0$};
			\draw(0,0)--(-1,-1.2)  node[left] {$ R_0$};
			\end{tikzpicture}}
		\caption{The geometric setting of Lemma \ref{lemma Three spheres estimate}: Propagation of smallness from a ball to its exterior.} 
		\label{geometry of Three spheres}
	\end{figure}
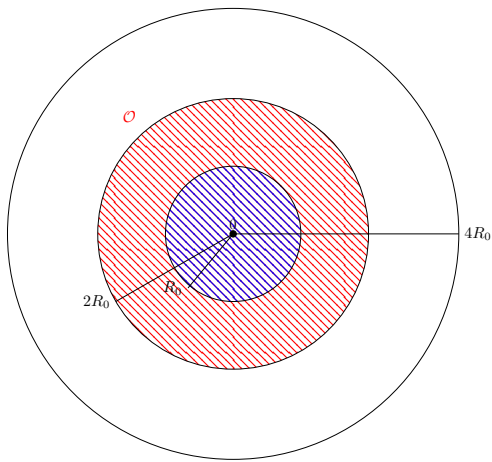
	
	\begin{proof}\textit{Step 1: Conformal reflection.} 
		First, we consider the following geometric setting 
		\begin{align*}
		\Omega_0= \A_0\left(\frac{R}{2},4R\right), \quad \O_0 = \A_0\left(\frac{R}{2},2R\right), \quad \omega_0= \A_0\left(\frac{R}{2},R\right). 
		\end{align*}
		We denote by $T$ the conformal reflection with respect to the sphere $S_0(R)$, given by:
		\begin{align} \label{Def-reflection-T}
		\R^d\setminus \{0\} \ni x \mapsto \tilde x=  T(x):=\frac{R^{2}}{|x|^{2}}x, 
		\end{align}
		
		The images of the sets $\Omega_0$, $\O_0$ and $ \omega_0$ are then given by:
		\begin{align} \label{def of tilde Omega tilde O tilde omega}
		\tilde \Omega = T \Omega_0 = \A_0\left(\frac{R}{4},2R\right), \quad \tilde \O= T \O_0 = \A_0\left(\frac{R}{2},2R\right), \quad  \tilde \omega = T \omega_0=  \A_0\left(R,2R\right). 
		\end{align}
		Therefore, for $u\in H^{1}(\Omega)$ a solution of \eqref{Elliptic-UC} with $(V,W_1,W_2)$ as in \eqref{potential-assumptions}, we consider the Kelvin transform of $u$ (see, for example, \cite{wermer1981potential}), 
		\begin{equation}
		\label{Def-u-R}
		\tilde u _{R} (x) = \left(\frac{R}{|x|}\right)^{(d-2)} u \left( \frac{R^{2}}{|x|^{2}}  x\right), \qquad x\in  \tilde \Omega.
		\end{equation}  
		By a classical computation, using the chain rule, we can verify that 
		for all $i \in \{1,\cdots,d\}$,
		\begin{align*}
		\partial_{x_i} \tilde u_R (x)&=-\frac{(d-2)x_i }{|x|^2}   \tilde u_R(x)+\left(\frac{R}{|x|}\right)^d \sum_{j=1}^d\left(\delta_{i j}- 2 \frac{x_i x_j}{|x|^2}   \right) \partial_{x_j} u\left(\frac{R^{2}}{|x|^{2}} x\right),  \qquad x\in \tilde \Omega, 
		\end{align*}
		and 
		\begin{equation*}
		\Delta \tilde u _R(x) =  \left(\frac{R}{|x|}\right)^{(d+2)} \Delta u\left(\frac{R^{2}}{|x|^{2}}x\right),  \qquad x\in  \tilde \Omega. 
		\end{equation*}
		We then consider the following potentials
		\begin{align*}
		\tilde W_{R,j}(x) &:=  \frac{R^{2}}{|x|^{2}} \left( W_j\left(\frac{R^{2}}{|x|^{2}}x\right)-2 x \cdot W_j\left(\frac{R^{2}}{|x|^{2}}x\right) \frac{x}{|x|^2} \right), \quad j \in \{1,2\}, 
		\\
		\tilde V_R(x)& :=  \underbrace{\frac{R^{4}}{|x|^{4}} \left(  V\left(\frac{R^{2}}{|x|^{2}}x\right) \right) }_{=:\tilde V_{R,1}}+ \frac{(d-2)}{|x|^2} x \cdot \left(\tilde W_{R,1}(x)+ \tilde W_{R,2}(x) \right). 
		\end{align*}
		Consequently, $u\in H^{1}(\Omega)$ is a solution of \eqref{Elliptic-UC} with $(V,W_1,W_2)$ if and only if $u_R$ given by \eqref{Def-u-R} solves
		\begin{align*}
		\Delta \tilde u_R = \tilde V_R \tilde u_R + \tilde W_{R,1} \cdot \nabla \tilde u_R + \div\left(\tilde W_{R,2} \tilde u_R\right) \quad \text{ in } \tilde \Omega.  
		\end{align*}

		\textit{Step 2: Application of Lemma \ref{Lemma A ball observed from a neighbourhood of its boundary}.}  Applying Lemma \ref{Lemma A ball observed from a neighbourhood of its boundary} to $\tilde u_R$ with the geometric setting defined in \eqref{def of tilde Omega tilde O tilde omega} (together with Remark \ref{Rem-Sum-of-Pot}), there exists $C>0$ depending only on $d$ and $R$ such that 
		\begin{align*}
		\norm{\tilde  u_R }_{H^1(\tilde \O)} \leqslant 
		C 
		e^{
			C \left(\norm{ \tilde  V_{R,1}}_{L^{q_0}(\tilde \Omega)}^{\gamma(q_0)}+\norm{ \tilde  W_{R,1}}_{L^{q_1}(\tilde \Omega)}^{\gamma(q_1)}+\norm{ \tilde  W_{R,2}}_{L^{q_2}(\tilde \Omega)}^{\gamma(q_2)}+\norm{\tilde  W_{R,1}}_{L^{q_1}(\tilde \Omega)}^{\delta(q_1)}
			+ \norm{\tilde W_{R,2}}_{L^{q_2}( \tilde \Omega)}^{\delta(q_2)}
			\right)
		}\norm{\tilde u_R }_{H^1( \tilde \omega)}^\alpha
		\norm{\tilde u_R}_{H^1(\tilde \Omega)}^{1-\alpha}, 
		\end{align*}  
		with $\gamma$ and $\delta$ as defined in \eqref{Conditions-gamma(q)-delta(q)}. Then, using the change of variables $y = (R/|x|)^2x$, one can verify that  
		\begin{align*}
		\norm{ \tilde  W_{R,j}}_{L^{q_j}(\tilde \Omega)} &\simeq \norm{    W_{j}}_{L^{q_j}(\Omega_0)}, \quad j \in \{1,2\},  
		\\ 
		\norm{ \tilde V_{R,1}}_{L^{q_0}(\tilde \Omega)} &\simeq \norm{ V }_{L^{q_0}( \Omega_0)},   
		\\ 
		\norm{\tilde  u_R }_{H^1(\tilde \Pi)} & \simeq \norm{  u }_{H^1( T^{-1} \Pi )}, \quad \Pi \in \{\tilde \Omega , \tilde \O , \tilde \omega \},  
		\end{align*}
		where we have used that the Jacobian 
		determinant of the map $T^{-1}$ ($=T$) is bounded. 
		
		Since $\gamma (q ) \leqslant \delta (q )$ for all $q \in  (d, \infty]$, there exists a positive constant $C$ depending only on $d$ and $R$, such that 
		\begin{align*}
		\norm{   u  }_{H^1(  \O_0)} & \leqslant 
		C 
		e^{
			C \left(\norm{   V }_{L^{q_0}(  \Omega_0)}^{\gamma (q_0)}+ \norm{    W_{1}}_{L^{q_1}(  \Omega_0)}^{\delta(q_1)}
			+ \norm{ W_{2}}_{L^{q_2}(  \Omega_0)}^{\delta(q_2)}
			\right)
		}\norm{ u }_{H^1(  \omega_0)}^\alpha
		\norm{ u }_{H^1(  \Omega_0)}^{1-\alpha}
		\\ 
		& 
		\leqslant 
		C 
		e^{
			C \left(\norm{ V }_{L^{q_0}( \Omega)}^{\gamma(q_0)}+ \norm{    W_{1}}_{L^{q_1}(  \Omega)}^{\delta(q_1)}
			+ \norm{ W_{2}}_{L^{q_2}(  \Omega)}^{\delta(q_2)}
			\right)
		}\norm{ u }_{H^1(  \omega)}^\alpha
		\norm{ u }_{H^1(  \Omega)}^{1-\alpha}.
		\end{align*}
		To conclude Lemma \ref{lemma Three spheres estimate}, one should also get a similar estimate for $ \norm{ u }_{H^1( B_0({R/4}))}$; this latter estimate is straightforward as $B_0(R/4) \subset \omega \subset \Omega$. The proof of Lemma \ref{lemma Three spheres estimate} is thus completed.
	\end{proof} 
	%
	%
	\subsection{The general case: Proof of Theorem \ref{Thm-QUCP}}

	\begin{proof}[Proof of Theorem \ref{Thm-QUCP}] The strategy follows the same lines as the one of  \cite[Theorem 5.6]{leelliptic}, see also \cite[Theorem 1.2]{ervedoza2023cost}, and is based on the classical ideas that three balls estimates allow to propagate the information.   
		
		\textit{Step 1: Propagation of smallness in neighborhoods of points $y$ in 
			$\overline\O$.} Recall the geometric condition {\bf(GC)}: For all $y \in \overline\O$, there exist $x_0 \in \omega$, $r_{y}>0$ and a smooth path $\gamma_{y}$ of finite length such that $\gamma_{ y}(0) = x_0$, $\gamma_{y}(1) = y$, and $\cup_{s \in [0,1]} B_{\gamma_{y}(s)}(r_{y}) \subset \Omega$.
		
		Accordingly, for $y \in \overline\O$, we take such path $\gamma_{y}$, and define $R_y=\min\{ r_{y}/ 4, r_0\}$, where $r_0$ is such that $B_{x_0} (r_0) \subset \omega$.
		
		We define a sequence $(x_{(j)})_j$, for $j \geq 0$, by $x_{(j)}=\gamma_{y}\left(t_j\right)$ where $t_0=0$ and, for $j \geq 1$, 
		\begin{align*}
		t_j=\left\{\begin{array}{ll}
		\inf A_j & \text { if } A_j \neq \emptyset, \\
		1 & \text { if } A_j=\emptyset,
		\end{array} \quad \text { where }  A_j=\left\{\sigma \in\left(t_{j-1}, 1\right] ; \gamma_{y}(\sigma) \notin B_{x_{(j-1)}}( R_y)\right\} .\right.    
		\end{align*}
		The sequence $(x_{(j)})_j$ is finite since the length of $ \gamma_{y}$ is finite. Let $\left(x_{(0)}, \cdots, x_{(N_y)}\right)$ be such a sequence with $x_{(N_y)}=y$. Note that we have $B_{x_{(j+1)}}(R_y) \subset B_{x_{(j)}}(2 R_y) \subset B_{x_{(j)}}(4 R_y)\subset \Omega$ for $j=0, \cdots, N_y-1$, because of the choice we made for $R_y$ above. By Lemma \ref{lemma Three spheres estimate} there exist $C>0$ and $\alpha \in(0,1)$ such that
		$$
		\|u\|_{H^1\left(B_{x_{(j+1)}}( R_y)\right)} \leq\|u\|_{H^1\left(B_{x_{(j)}} ( 2 R_y )\right)} 
		\leq C e^{
			C \left(\norm{   V  }_{L^{q_0}(  \Omega)}^{\gamma(q_0)}+\norm{    W_{1}}_{L^{q_1}(  \Omega)}^{\delta(q_1)}
			+ \norm{ W_{2}}_{L^{q_2}(  \Omega)}^{\delta(q_2)}
			\right)
		}\|u\|_{H^1(\Omega)}^{1-\alpha}  \|u\|_{H^1(B_{x_{(j)}}(R_y))}^\alpha,
		$$
		for $j=0, \ldots, N-1$. Iterating this estimate we obtain  
		\begin{align*}
		\|u\|_{H^1(B_y( R_y))} 
		&   \leqslant C^{\sum_{j = 0}^{N_y} \alpha^j}  e^{
			C \left(\sum_{j = 0}^{N_y} \alpha^j\right) \left(\norm{   V }_{L^{q_0}(  \Omega)}^{\gamma(q_0)}+\norm{   V_2 }_{L^{p_2}(  \Omega)}^{\gamma_2(p_2)}+\norm{    W_{1}}_{L^{q_1}(  \Omega)}^{\delta(q_1)}
			+ \norm{ W_{2}}_{L^{q_2}(  \Omega)}^{\delta(q_2)}
			\right)
		} \|u\|_{H^1(\Omega)}^{1-\alpha^{N_y}}\|u\|_{H^1\left(B_{x_{(0)}}( R_y)\right)} ^{\alpha^{N_y}}
		\\
		&  \leqslant C_y  e^{
			C_y \left(\norm{   V }_{L^{q_0}(  \Omega)}^{\gamma(q_0)}+\norm{   V_2 }_{L^{p_2}(  \Omega)}^{\gamma_2(p_2)}+\norm{    W_{1}}_{L^{q_1}(  \Omega)}^{\delta(q_1)}
			+ \norm{ W_{2}}_{L^{q_2}(  \Omega)}^{\delta(q_2)}
			\right)
		} \|u\|_{H^1(\Omega)}^{1-\alpha_y}\|u\|_{H^1(\omega)} ^{\alpha_y},	
		\end{align*}
		for some $C_y>0$ and $\alpha_y\in (0,1)$.
		
		\textit{Step 2: Compactness argument.} Because of the compactness of $\bar \O$, we can choose a finite number of balls $(B_{y_j}( R_{y_j}/2))_{j \in \{1, \cdots, p\}}$ with $y_j \in \overline\O$ and $R_{y_j}$ as above such that $\bar\O \subset \cup_{j \in \{1, \cdots, p\}} B_{y_j}( R_{y_j}/2)$. We then construct a partition of unity of $\bar  \O$ by choosing smooth functions $(\chi_j)_{0 \leqslant j \leqslant N}$, each one being supported in $B_{y_j}( R_{y_j})$, such that 
		\begin{align*}
		\sum_{j=1}^p \chi _j = 1 \, \text{ in a neighborhood of } \, \bar \O, \quad 0 \leqslant \chi_j \leqslant 1. 
		\end{align*}
		Therefore, 
		\begin{align*}
		\norm{u}_{H^1(\O)} 
		& \leqslant C \sum_{j=1}^p \norm{u}_{H^1( \operatorname{supp} \chi_j )} 
		\leq C \sum_{j=1}^p \norm{u}_{H^1\left(B_{y_j}\left(R_{y_j}\right)\right)} 
		\\
		&\leq 
		C  e^{
			C \left(\norm{   V }_{L^{q_0}(  \Omega)}^{\gamma(q_0)}+\norm{   V_2 }_{L^{p_2}(  \Omega)}^{\gamma_2(p_2)}+\norm{    W_{1}}_{L^{q_1}(  \Omega)}^{\delta(q_1)}
			+ \norm{ W_{2}}_{L^{q_2}(  \Omega)}^{\delta(q_2)}
			\right)
		} \|u\|_{H^1(\Omega)}^{1-\alpha}\|u\|_{H^1\left(\omega\right)} ^{\alpha}, 
		\end{align*}
		with $\alpha = \min_{j \in \{1, \cdots, p\}} \{ \alpha_{y_j}\}$. This concludes the proof of Theorem \ref{Thm-QUCP}.
	\end{proof}
	
	%
	%
	\appendix 
	\section{$L^p$-$L^q$ estimates for Fourier multipliers}\label{appendix Fourier multiplier operators} 
	
	In this section, we present the machinery used to get estimates on the operators $(K_{\tau,j})_{j \in \{0, \cdots, d\}}$, and that was developed in \cite[Section 5]{Dehman-Ervedoza-Thabouti-2023}. 
	
	Let $n \geqslant 2$. We consider $X_0 <X_1$ and coefficients $(\lambda_j)_{j \in \{1, \cdots, n\}}$ satisfying %
	\begin{equation}
	\label{Coercivity-Restriction}
	\exists c_0 >0, \quad \forall a \in [X_0, X_1], \, \forall \xi \in \R^{n}, \quad 
	\frac{1}{c_0} | \xi|^2 
	\leq 
	\sum_{j =1}^n (1 - a \lambda_j) |\xi_j|^2 
	\leq 
	c_0 |\xi|^2.  
	\end{equation}
	We also introduce the function $\psi $ defined by 
	\begin{equation}
	\label{Def-Psi-R-n}
	\psi(a, \xi)=\sqrt{ \sum_{j=1}^{n}(1-a  \lambda_{j}) \xi_{j} ^2},
	\qquad \qquad a \in [X_0,X_1], \  \xi \in \R^{n}.
	\end{equation}
	and $\Sigma_a$ the ellipsoid defined for $a \in [X_0,X_1]$ by 
	\begin{equation}
	\label{Def-Sigma-a}
	\Sigma_a = \{\xi \in \R^n, \psi(a, \xi) = 1 \}.
	\end{equation}
	
	For $a \in [X_0, X_1]$ and $k \in L^\infty(\R_+, L^\infty(\Sigma_a))$, we consider operators given as follows:
	\begin{equation}
	\label{Class-Fourier-Multipliers}
	K_{a,k} : L^2(\R^n) \to L^2(\R^n), 
	\text{ given by } 
	\widehat{K_{a,k}(f)} (\xi ) 
	= 
	k\left(\psi(a,\xi),\frac{\xi}{\psi(a,\xi)}  \right) 
	\hat f(\xi),
	\quad \qquad \xi \in \R^n.
	\end{equation}
	In other words, $K_{a,k}$ is defined as a Fourier multiplier, and we look at the multiplier in some kind of radial coordinates associated to $\Sigma_a$: $\psi(a, \xi)$ is a positive real number corresponding to a radius, and $\xi/ \psi(a, \xi)$ is an element of the ellipsoid $\Sigma_a$. Also note that for $a = 0$, this coincides with the classical radial coordinates for $\R^n$.

	We have the following result: 
	\begin{proposition}[Proposition 5.3. \cite{Dehman-Ervedoza-Thabouti-2023}]
		\label{Prop-Est-Fourier-Op} 
		Let $n\in \N$, $n\geq 2$. Let $X_0 <X_1$, and the coefficients $(\lambda_j)_{j \in \{1, \cdots, n\}}$ satisfy \eqref{Coercivity-Restriction}. For $a \in [X_0,X_1]$, let $\psi$ and $\Sigma_a$ be as in \eqref{Def-Sigma-a}--\eqref{Def-Psi-R-n}.
		Then there exists a constant $C>0$ such that 		%
		\begin{itemize}[topsep=0pt,itemsep=0pt,parsep=0pt]
			
			\item for all $a \in [X_0,X_1]$, for all $k \in L^\infty(\R_+, L^\infty(\Sigma_a))$, the Fourier multiplier operator $K_{a,k}$ in \eqref{Class-Fourier-Multipliers} maps $L^2(\R^{n})$ to itself and 
			\begin{equation}
			\label{Est-K-a-k-L2-L2}
			\norm{K_{a,k}}_{\mathscr{L}(L^2(\R^n))}
			\leq 
			\| k \|_{L^\infty(\R_+, L^\infty(\Sigma_a))}.
			\end{equation}
			\item for all $a \in [X_0,X_1]$, for all $k \in L^\infty(\R_+, L^\infty(\Sigma_a))$ satisfying
			$$
			\int_0^\infty \| k(\lambda, \cdot )\|_{L^\infty(\Sigma_a)} \, \lambda^{\frac{n-1}{n+1}} \, d\lambda < \infty, 
			$$
			the Fourier multiplier operator $K_{a,k}$ in \eqref{Class-Fourier-Multipliers} belongs to $\mathscr{L}(L^{\frac{2(n+1)}{(n+3)}}(\R^n),L^{\frac{2(n+1)}{(n-1)}}(\R^n))$ and 
			\begin{equation}
			\label{Est-K-a-k-Lp-Lp'}
			\norm{K_{a,k}}_{\mathscr{L}(L^{\frac{2(n+1)}{(n+3)}}(\R^n),L^{\frac{2(n+1)}{(n-1)}}(\R^n))} 
			\leq 
			C \int_0^\infty \| k(\lambda, \cdot )\|_{L^\infty(\Sigma_a)} \, \lambda^{\frac{n-1}{n+1}} \, d\lambda.
			\end{equation}
			\item for all $a \in [X_0,X_1]$, for all $k \in L^\infty(\R_+, L^\infty(\Sigma_a))$ satisfying
			$$
			\int_0^\infty \| k(\lambda, \cdot )\|_{L^\infty(\Sigma_a)}^2 \, \lambda^{\frac{n-1}{n+1}} \, d\lambda < \infty, 
			$$
			the Fourier multiplier operator $K_{a,k}$ in \eqref{Class-Fourier-Multipliers} belongs to 
			$$
			\mathscr{L}(L^{\frac{2(n+1)}{(n+3)}}(\R^n),L^{2}(\R^n)) \cap \mathscr{L}(L^{2}(\R^n), L^{\frac{2(n+1)}{(n-1)}}(\R^n)), 
			$$
			and  
			\begin{align}
			\label{Est-K-a-k-Lp-L2}
			\norm{K_{a,k}}_{\mathscr{L}(L^{\frac{2(n+1)}{(n+3)}}(\R^n),L^{2}(\R^n))} 
			\leq 
			C \sqrt{\int_0^\infty \| k(\lambda, \cdot )\|_{L^\infty(\Sigma_a)}^2 \, \lambda^{\frac{n-1}{n+1}} \, d\lambda}, 
			\\ 
			\label{Est-K-a-k-L2-Lp'}
			\norm{K_{a,k}}_{\mathscr{L}(L^{2}(\R^n), L^{\frac{2(n+1)}{(n-1)}}(\R^n))}
			\leq
			C \sqrt{\int_0^\infty \| k(\lambda, \cdot )\|_{L^\infty(\Sigma_a)}^2 \, \lambda^{\frac{n-1}{n+1}} \, d\lambda}.
			\end{align}
		\end{itemize}
	\end{proposition}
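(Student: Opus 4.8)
The plan is to reduce the general elliptic–radial multiplier to the classical radial one associated with the Euclidean sphere $S^{n-1}$, and then to run a Stein–Tomas restriction/extension argument shell by shell, tracking the scaling in the radial variable.

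First I would normalize the ellipsoid. Writing $\psi(a,\xi) = |D_a\xi|$ with $D_a = \operatorname{diag}(\sqrt{1-a\lambda_1},\dots,\sqrt{1-a\lambda_n})$, the coercivity hypothesis \eqref{Coercivity-Restriction} forces all eigenvalues of $D_a$ to lie in $[c_0^{-1/2},c_0^{1/2}]$, uniformly in $a \in [X_0,X_1]$. The substitution $\eta = D_a\xi$ on the Fourier side is conjugate to a linear change of variables on the physical side, which is bounded on every $L^p(\R^n)$ with norm depending only on $c_0$, and it turns the symbol $k(\psi(a,\xi),\xi/\psi(a,\xi))$ into one of the form $\tilde k(|\eta|,\eta/|\eta|)$ with $\|\tilde k(\lambda,\cdot)\|_{L^\infty(S^{n-1})}$ comparable to $\|k(\lambda,\cdot)\|_{L^\infty(\Sigma_a)}$ uniformly in $a$. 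Hence it suffices to prove the three bounds for multipliers of the form $m(\eta)=\tilde k(|\eta|,\eta/|\eta|)$ on $\R^n$ with absolute constants, which also disposes of the dependence on the coefficients $(\lambda_j)$.

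The $L^2\to L^2$ bound \eqref{Est-K-a-k-L2-L2} is then immediate from Plancherel, since $\|m\|_{L^\infty(\R^n)}=\sup_{\lambda>0}\|\tilde k(\lambda,\cdot)\|_{L^\infty(S^{n-1})}$. For \eqref{Est-K-a-k-Lp-Lp'} I would disintegrate the multiplier over dilates of the sphere: in polar coordinates $\eta=r\omega$ one has the kernel identity
\begin{equation*}
\mathcal{F}^{-1}[m](x) = c_n\int_0^\infty r^{n-1}\,\big(\mathcal{E}\tilde k(r,\cdot)\big)(rx)\,dr, \qquad \mathcal{E}g(y)=\int_{S^{n-1}}e^{iy\cdot\omega}g(\omega)\,d\omega,
\end{equation*}
so that $K_{0,\tilde k}f = c_n\int_0^\infty \big(f*g_r\big)\,dr$, where $\widehat{g_r}$ is the density $\tilde k(r,\cdot/r)$ carried by the sphere $rS^{n-1}$ (of $L^\infty$ norm comparable to $\|\tilde k(r,\cdot)\|_{L^\infty(S^{n-1})}$, with no further power of $r$). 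For each fixed $r$, factoring $f*g_r$ through restriction to and extension from $rS^{n-1}$ and invoking the Stein–Tomas theorem for the unit sphere, rescaled, at the endpoint $p=\tfrac{2(n+1)}{n+3}$, $p'=\tfrac{2(n+1)}{n-1}$, a scaling computation (using $n/p-n/p'=\tfrac{2n}{n+1}$) gives exactly $\|f*g_r\|_{L^{p'}}\lesssim r^{\frac{n-1}{n+1}}\|\tilde k(r,\cdot)\|_{L^\infty(S^{n-1})}\|f\|_{L^p}$. Applying Minkowski's integral inequality in $r$ then yields \eqref{Est-K-a-k-Lp-Lp'}.

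Finally, the mixed bounds \eqref{Est-K-a-k-Lp-L2}–\eqref{Est-K-a-k-L2-Lp'} follow from a $TT^*$ argument: the adjoint $K_{a,k}^*$ is the multiplier with symbol $\overline{k(\psi(a,\cdot),\cdot/\psi(a,\cdot))}$, so $K_{a,k}K_{a,k}^*$ and $K_{a,k}^*K_{a,k}$ are again of the same type with symbol $|k(\psi(a,\cdot),\cdot/\psi(a,\cdot))|^2$; applying \eqref{Est-K-a-k-Lp-Lp'} to one of them gives $\|\cdot\|_{\mathscr{L}(L^p,L^{p'})}\lesssim\int_0^\infty\|k(\lambda,\cdot)\|_{L^\infty(\Sigma_a)}^2\lambda^{\frac{n-1}{n+1}}\,d\lambda$, and the identity $\|T\|_{\mathscr{L}(L^2,L^{p'})}^2=\|TT^*\|_{\mathscr{L}(L^p,L^{p'})}$ (with $T=K_{a,k}$, then $T=K_{a,k}^*$) together with duality produces \eqref{Est-K-a-k-L2-Lp'} and \eqref{Est-K-a-k-Lp-L2}. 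The one genuinely delicate point is the shell-by-shell step: the pieces of the disintegration live on measure-zero spheres, so no naive $L^2$ orthogonality is available and the estimate must be drawn directly from the restriction/extension inequalities with the correct scaling exponent $\tfrac{n-1}{n+1}$; the remaining ingredients are Plancherel, a bounded linear change of variables, and abstract duality.
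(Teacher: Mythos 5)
Your argument is correct and is essentially the approach of \cite[Section 5]{Dehman-Ervedoza-Thabouti-2023}, from which this proposition is quoted: a polar disintegration of the multiplier adapted to $\psi(a,\cdot)$, the Stein--Tomas theorem applied shell by shell with the scaling exponent $\frac{n-1}{n+1}$, Minkowski's inequality in the radial variable for \eqref{Est-K-a-k-Lp-Lp'}, and a $TT^*$/duality step for the mixed bounds \eqref{Est-K-a-k-Lp-L2}--\eqref{Est-K-a-k-L2-Lp'}. The only cosmetic difference is that you first flatten the ellipsoid $\Sigma_a$ to the round sphere via the uniformly bounded linear map $D_a$ before invoking restriction, whereas the cited proof parametrizes by $\xi \mapsto (\psi(a,\xi), \xi/\psi(a,\xi))$ and uses the restriction theorem for the ellipsoids directly; both are legitimate since \eqref{Coercivity-Restriction} keeps the curvature uniformly non-degenerate and makes all constants depend only on $c_0$ and $n$.
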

	%

	\bibliographystyle{plain}

\end{document}